\theoremstyle{plain}
\newtheorem{theorem}{Theorem}
\newtheorem{lemma}[theorem]{Lemma}
\theoremstyle{definition}
\newtheorem{remark}[theorem]{Remark}
\newtheorem{remarks}[theorem]{Remarks}
\newtheorem{example}[theorem]{Example}
\newtheorem{defn}[theorem]{Definition}
\newtheorem{defns}[theorem]{Definitions}
\newcommand{\Q}{{\mathbb{Q}}}
\newcommand{\N}{{\mathbb{N}}}
\newcommand{\R}{{\mathbb{R}}}
\newcommand{\T}{{\mathbb{T}}}
\newcommand{\Z}{{\mathbb{Z}}}
\newcommand{\cC}{{\mathcal{C}}}
\newcommand{\cD}{{\mathcal{D}}}
\newcommand{\cM}{{\mathcal{M}}}
\newcommand{\cR}{{\mathcal{R}}}
\newcommand{\hatH}{{\widehat{H}}}
\newcommand{\hphi}{{\hat{\phi}}}
\newcommand{\hf}{{\hat{f}}}
\newcommand{\bv}{{\mathbf{v}}}
\newcommand{\bal}{{\boldsymbol\alpha}}
\newcommand{\bbeta}{{\boldsymbol\beta}}
\newcommand{\barf}{\overline{f}}
\newcommand{\barF}{\overline{F}}
\newcommand{\ux}{\underline{x}}
\newcommand{\uy}{\underline{y}}
\newcommand{\tf}{\tilde{f}}
\newcommand{\tg}{\tilde{g}}
\newcommand{\tH}{\tilde{H}}
\newcommand{\tx}{\tilde{x}}
\newcommand{\tX}{\tilde{X}}
\newcommand{\tPhi}{{\tilde{\Phi}}}
\newcommand{\Conv}{\operatorname{Conv}}
\newcommand{\DF}{\operatorname{DF}}
\newcommand{\supp}{\operatorname{supp}}
\newcommand{\id}{\mathrm{id}}
\newcommand{\Max}{\mathrm{Max}}
\newcommand{\one}{D}
\newcommand{\two}{E}
\newcommand{\twor}{e}
\newcommand{\veps}{\varepsilon}
\newcommand{\oI}{{(\ell(t), r(t))}}
\newcommand{\sMZ}{\text{\rm\tiny MZ}}
\newcommand{\raw}{\rightarrow}
\def\MZ{{Misiurewicz and Ziemian}}
\newcommand{\rot}{\operatorname{rot}}
\def\hphi{{\hat{\phi}}}
\def\hkappa{{\hat{\kappa}}}
\newcommand{\dis}{\operatorname{dis}}
\def\be{{\mathbf e}}
\newcommand{\Bd}{\operatorname{Bd}}
\newcommand{\cI}{{\mathcal{I}}}
\newcommand{\cJ}{{\mathcal{J}}}
\newcommand{\us}{\underline{s}}
\newcommand{\uk}{\underline{k}}
\def\homeo{homeomorphism}
\def\homeos{homeomorphisms}
\newcommand{\Cl}{\operatorname{Cl}}
\newcommand{\Intt}{\operatorname{Int}}
\newcommand{\Ex}{\operatorname{Ex}}
\newcommand{\cB}{{\mathcal{B}}}
\newcommand{\cL}{{\mathcal{L}}}
\newcommand{\cH}{{\mathcal{H}}}
\def\bn{\mathbf{n}}
\newcommand{\goober}[1]{{#1-rotational set}} 
\newcommand{\plaingoober}{{rotational set}}
\newcommand{\vgoober}{\goober{$\bv$}} 
\newcommand{\algoober}{\goober{$\bal$}}
\newcommand{\bu}{{\mathbf u}}
\def\uw{\underline{w}}
\def\hGamma{\hat{\Gamma}}
\newcommand{\tz}{\tilde{z}}
\newcommand{\bp}{{\mathbf p}}
\newcommand{\hg}{{\hat{g}}}
\newcommand{\Min}{\operatorname{MS}}
\begin{document}

\title{New Rotation Sets in a Family of Torus Homeomorphisms}
\date{October 2015}
\author{Philip Boyland}
\address{Department of
    Mathematics\\University of Florida\\372 Little Hall\\Gainesville\\
    FL 32611-8105, USA}
\email{boyland@ufl.edu}
\author{Andr\'e de Carvalho}
\address{Departamento de
    Matem\'atica Aplicada\\ IME-USP\\ Rua Do Mat\~ao 1010\\ Cidade
    Universit\'aria\\ 05508-090 S\~ao Paulo SP\\ Brazil}
\email{andre@ime.usp.br}
\author{Toby Hall}
\address{Department of Mathematical Sciences\\ University of
    Liverpool\\ Liverpool L69 7ZL, UK}
\email{tobyhall@liv.ac.uk}

\thanks{ We would like to thank F\'abio Armando Tal and
Salvador Addas-Zanata for numerous fruitful conversations. 
The authors are grateful for the support of FAPESP grants
  2010/09667-0 and \mbox{2011/17581-0}. This research has also been supported
  in part by EU Marie-Curie IRSES Brazilian-European partnership in
  Dynamical Systems (FP7-PEOPLE-2012-IRSES 318999 BREUDS)}

\begin{abstract}
We construct a family $\{\Phi_t\}_{t\in[0,1]}$ of homeomorphisms of
the two-torus isotopic to the identity, for which all of the rotation
sets $\rho(\Phi_t)$ can be described explicitly. We
analyze the bifurcations and typical behavior of rotation sets in the
family, providing insight into the general questions of toral rotation
set bifurcations and prevalence.  We show that there is a full measure
subset of~$[0,1]$, consisting of infinitely many mutually disjoint
non-trivial closed intervals, on each of which the rotation set mode
locks to a constant polygon with rational vertices; that the generic
rotation set in the Hausdorff topology has infinitely many extreme
points, accumulating on a single totally irrational extreme point at
which there is a unique supporting line; and that, although $\rho(\Phi_t)$
varies continuously with~$t$, the set of extreme points of~$\rho(\Phi_t)$
does not.  The family also provides examples of rotation sets for
which an extreme point is not represented by any minimal invariant
set, or by any directional ergodic measure.
\end{abstract}  

\maketitle

%---------------------------------------------------------------

\section{Introduction}

In the theory of dynamics on manifolds, {\em rotation vectors} are
used to describe the asymptotic motion of orbits: the magnitude of the
rotation vector gives the speed of motion, and its direction gives the
homology class which best approximates the motion. Rotation vectors in
this form were introduced by Schwartzman~\cite{schwartzman} using
invariant measures. A topological version was given by
Fried~\cite{fried}, and an elegant synthesis was provided by
Mather~\cite{mather}. 

The set of all rotation vectors realized by the orbits of a particular
dynamical system is called its {\em rotation set}, and gives a
(perhaps coarse) invariant of the total dynamics. Given a class of
dynamical systems, there are four natural questions one can ask about
their rotation sets.

\medskip

\noindent\textbf{I. Shapes.} Which sets can be realized as rotation
sets?

\medskip

\noindent\textbf{II. Representatives.} How much of the dynamics is revealed by the
  rotation set? Are there good dynamical representatives of every
  vector in the rotation set?

\medskip

\noindent\textbf{III. Bifurcations.} How do rotation sets vary in parameterized
  families?

\medskip
\noindent\textbf{IV. Prevalence.} What does the typical rotation set look like?

\medskip

The answers to these questions are most completely understood for
homeomorphisms of the circle (the classical case studied by Poincar\'e
and Denjoy), for degree-one endomorphisms of the circle, and for
homeomorphisms of the annulus isotopic to the identity. In this paper
we study homeomorphisms $\Phi\colon\T^2\to\T^2$ of the two-dimensional
torus which are isotopic to the identity. Given such a homeomorphism,
fix a lift $\tPhi\colon\R^2\to\R^2$ to the universal cover. The
motion of orbits of~$\Phi$ is measured by the {\em displacement
  cocycle} $\dis\colon \T^2\times\Z\to\R^2$ given by
\[
\dis(z, r) =  \tPhi^r(\tz) - \tz,
\]
which is independent of the choice $\tz$ of lift of~$z$. The {\em
  rotation vector} of a point~$z\in\T^2$ is given by
\[
\rho(z) = \lim_{r\to\infty}\frac{\dis(z,r)}{r}\,\in\R^2
\]
when this limit exists. The {\em pointwise rotation set} of~$\Phi$ can
then be defined by
\[
\rho_p(\Phi) = \{\rho(z)\,:\, z\in\T^2,\,\rho(z)\text{ exists}\}.
\]
The effect of changing the lift $\tPhi$ of $\Phi$ is to translate
$\rho_p(\Phi)$ by an integer vector, but all of the torus
homeomorphisms we consider will have natural preferred lifts, and we
suppress this dependence.

The pointwise rotation set is difficult to work with {\em a priori},
and Misiurewicz and Ziemian~\cite{MZ1} introduced the now standard
definition of what we refer to as the {\em Misiurewicz-Ziemian
  rotation set}, which in most situations is easier to work with and has better
properties:

\begin{equation}
\label{eq:MZ-defn}
\rho_{\sMZ}(\Phi) = \{\bv\in\R^2\,:\, \frac{\dis(z_i,r_i)}{r_i} \to \bv
\text{ for some sequences $(z_i)$ in $\T^2$ and $(r_i)$ in~$\N$ with
  $r_i\to\infty$}\}. 
\end{equation}
For example, it is immediate from the definition that $\rho_{\sMZ}(\Phi)$ is a
compact subset of~$\R^2$.

Misiurewicz and Ziemian also proved that $\rho_{\sMZ}(\Phi)$ is convex,
giving rise to a basic trichotomy: $\rho_{\sMZ}(\Phi)$ is either a
point, or a line segment, or has interior. Much of the early work on
rotation sets focused on the third case, while in recent years there
has been substantial progress on the first two cases. In this paper we
consider only rotation sets $\rho_{\sMZ}(\Phi)$ with interior.

\medskip

Calculating the rotation set of a specific homeomorphism~$\Phi$ is
difficult in general. For this reason, most work has either
concentrated on general properties of rotation sets, or on the careful
construction of examples of homeomorphisms whose rotation sets have
certain properties. In this paper we give what we believe to be the
first construction of a nontrivial family $\{\Phi_t\}_{t\in[0,1]}$ of
homeomorphisms whose rotation sets can be described explicitly.
We classify and describe all of the (uncountably many) different rotation
sets $\rho_{\sMZ}(\Phi_{t})$ and their bifurcations with the parameter
$t$.  We are therefore able to give answers to the four questions above for
the rotation sets in this family. In particular, the family yields the
first new examples of rotation sets in the literature since the work
of Kwapisz in the 1990s. We prove that, in fact, these new rotation sets
are typical in the sense that they contain a residual set in the
collection of all rotation sets in the family with the Hausdorff
topology.

\medskip

While the construction of the family is carried out in such a way as
to make the calculation of rotation sets possible, it is not targeted
to produce any particular behavior. The phenomena which we describe
therefore occur naturally within the family. The systematic study of
parametrized families of maps has led to enormous progress in the
study of dynamical systems. The complete description given here of all
the rotation sets in our family provides valuable insights into the
possible structures and bifurcations of torus rotation sets and
motivates questions and conjectures about the answers to the four
questions in the general case: see Section~\ref{sec:questions}.

\medskip

We now give a summary of the main results of the paper, together with
a description of some relevant results of other authors.  In broad
outline the rotation sets in the family conform with the well known
behavior of the rotation numbers of generic families of circle
\homeos: for parameters that are buried points in a Cantor set
$\cB\subset [0,1]$, the rotation number is irrational, while in the
closure of each complementary gap of $\cB$ the rotation number mode
locks at a rational value. The analog of rational rotation number for
rotation sets is for the rotation set to be a polygon with rational
vertices, while the analog of irrational rotation number is for the
rotation set to have infinitely many extreme points, some of which are
irrational.

\medskip\medskip

\subsection*{Question I. Shapes}
In order to describe the types of rotation set realized by the family,
we need some definitions. An extreme point of a convex subset of the
plane is called a {\em vertex} if it has multiple supporting lines,
and a {\em smooth point} otherwise. A vertex is {\em polygonal} if it
has a neighborhood in the rotation set which is isometric to the neighborhood of a vertex
of a polygon. An irrational vector~$\bv=(v_1,v_2)\in\R^2$ is {\em
  planar totally irrational} if $v_1$, $v_2$, and $1$ are rationally
independent (i.e. if translation by~$\bv$ induces a minimal
homeomorphism of the torus), and {\em partially irrational} otherwise.

There are three types of rotation set~$\rho_{\sMZ}(\Phi_t)$
(see Theorem~\ref{thm:torusrot}):

\medskip

\begin{description}
\item[Rational regular] The rotation set is a convex polygon with
  rational vertices (Figure~\ref{fig:rat-rotset}).
\item[Irrational regular] The rotation set has infinitely many
  rational polygonal vertices, which accumulate on a single irrational
  extreme point (Figure~\ref{fig:reg-rotset}). This irrational extreme
  point can be either a vertex or a smooth point, and can be either
  partially or totally irrational.
\item[Irrational exceptional] The rotation set has infinitely many
  rational polygonal vertices, which accumulate on two irrational
  extreme points (Figure~\ref{fig:ex-rotset}). 
 The irrational extreme points are the endpoints of an {\em
    exceptional interval} in the boundary of $\rho_{\sMZ}(\Phi_t)$, which has
  the property that, for all~$s$, it is either contained in, or disjoint from,
  $\rho_{\sMZ}(\Phi_s)$.
\end{description}
Polygons with rational vertices are the best understood
type of rotation set. Kwapisz proved~\cite{kwap1} that every rational
polygon in the plane can be realized as $\rho_{\sMZ}(\Phi)$ for some
$C^\infty$-diffeomorphism $\Phi\colon\T^2\to\T^2$.

The first example of a rotation set having an irrational extreme point
was also provided by Kwapisz~\cite{kwap2}: he constructed a
$C^1$-diffeomorphism whose rotation set has infinitely many rational
polygonal vertices accumulating on two partially irrational
vertices. As far as we are aware, our family provides the first
examples of rotation sets with totally irrational extreme points in
the literature (such rotation sets are in fact generic in the family,
as discussed under question~IV below). Crovisier and le Roux (personal
communication)  have previously constructed such an
example starting, like Kwapisz's construction, with Denjoy examples
on the circle.

\subsection*{Question II. Representatives}
The simplest version of this question has an affirmative answer for
every homeomorphism in the family: for every
$\bv\in\rho_{\sMZ}(\Phi_t)$, there is some $z\in\T^2$ with rotation
vector~$\bv$, so that $\rho_p(\Phi_t) = \rho_{\sMZ}(\Phi_t)$ (see
Theorem~\ref{thm:torusrot}).

Given this, the next question is whether or not every
$\bv\in\rho_{\sMZ}(\Phi_t)$ is represented by an entire compact invariant
set, ideally one which looks like the invariant set of the rigid
rotation of the torus induced by translation by~$\bv$. Here the answer
is less straightforward, and we require some definitions.

A minimal set~$D$ for a torus homeomorphism~$\Phi$ is called a {\em
  $\bv$-minimal set} if every element of~$D$ has rotation
vector~$\bv$. For a rigid rotation by~$\bv$, we have that
 $\dis(z,r) - r\bv = 0$ for all $z\in
\T^2$ and $r\in\N$. If this quantity is uniformly bounded over all $z$
in an invariant subset $Z$ of $\T^2$ and all $r\in\N$, then~$Z$ is said
to have {\em bounded deviation}. A $\bv$-minimal set with bounded
deviation is called a {\em\vgoober}. J{\"a}ger~\cite{J1} showed that a
\vgoober\ is indeed dynamically similar to rigid rotation: if $\bv$ is
irrational, then a \vgoober\ is always semi-conjugate to rigid
translation on either the torus (if $\bv$ is totally irrational) or
the circle (if $\bv$ is partially irrational).  

When $\bv$ is rational, a theorem of Franks~\cite{franks} states that
there is a periodic point $z$ with $\rho(z) = \bv$: in particular, its
orbit is a \vgoober. It follows from a result of
Parwani~\cite{parwani} that this periodic orbit can be chosen to have
the same topological type as a periodic orbit of the rigid rotation
induced by translation by~$\bv$.

Misiurewicz and Ziemian~\cite{MZ2} show that every $\bv$ in the
interior of the rotation set of an arbitrary homeomorphism is
represented by a \vgoober; and that there exist homeomorphisms~$\Phi$
for which $\rho_{\sMZ}(\Phi)$ is a polygon with rational vertices,
with the property that some vectors $\bv$ on the boundary of
$\rho_{\sMZ}(\Phi)$ are not represented by any $\bv$-minimal set.

In view of these results, the only question remaining concerns the
existence of dynamical representatives of irrational points~$\bv$ in
the boundary of $\rho_{\sMZ}(\Phi_t)$. The answer to this question depends on
the type of the rotation set (see Theorem~\ref{thm:torus2}).

\medskip

\begin{description}
\item[Rational regular] Every $\bv\in\rho_{\sMZ}(\Phi_t)$ is represented by a
  \vgoober. 
\item[Irrational regular] Every $\bv\in\rho_{\sMZ}(\Phi_t)$ except perhaps
  for the irrational extreme point is represented by a \vgoober. The
  irrational extreme point is always represented by a uniquely ergodic
  $\bv$-minimal set, but this set sometimes does not have bounded deviation.
\item[Irrational exceptional] Every $\bv\in\rho_{\sMZ}(\Phi_t)$ except for
  elements of the exceptional interval~$P$ is represented by a
  \vgoober. There are no $\bv$-minimal sets for any $\bv\in P$:
  however, there is a minimal set~$D$ such that $\rho_{\sMZ}(D,\Phi_t) = P$,
  where $\rho_{\sMZ}(D,\Phi_t)$ is defined as in~(\ref{eq:MZ-defn}) but with the
  sequence $(z_i)$ in $D$. As a consequence, $\Phi_t|_D$ is not uniquely ergodic, and in fact it
has exactly two ergodic invariant Borel measures. Irrational
exceptional homeomorphisms therefore provide examples in which there is
an extreme point~$\bv$ of the rotation set with the property that the
support of its representing ergodic measure contains points whose
rotation vector differs from~$\bv$. Such measures are called {\em
  lost} in the terminology of Geller and Misiurewicz~\cite{gellmis}
(cf.\ \cite{jenk,jenk2}).
\end{description}

The relationship between these results and recent work of
Zanata~\cite{zanata} and Le Calvez \& Tal~\cite{tal} is also worth
noting: see Remark~\ref{rmk:compare}c).

\subsection*{Question III. Bifurcations}
The {\em bifurcation set}~$\cB\subset[0,1]$, on which $\rho_{\sMZ}(\Phi_t)$ is not
locally constant, is a Cantor set of Lebesgue measure zero. The set of
rational regular parameters is the union of the closures of the
complementary gaps of~$\cB$, with different gaps corresponding to
different rational polygonal rotation sets. Irrational parameters are
buried points of~$\cB$: both irrational regular and irrational exceptional parameters are
dense in~$\cB$.

A theorem of Misiurewicz and Ziemian~\cite{MZ2} guarantees that
$\rho_{\sMZ}(\Phi_t)$ varies continuously (in the Hausdorff topology)
with~$t$. Tal and Zanata pointed out that Hausdorff continuity of the
set of extreme points of $\rho_{\sMZ}(\Phi_t)$ is a stronger property, and
asked whether the family~$\{\Phi_t\}$ has this stronger property. It
does not: the map from~$t$ to the set of extreme points of
$\rho_{\sMZ}(\Phi_t)$ is discontinuous exactly at irrational exceptional
parameters, and at the right hand endpoints of the complementary gaps
of~$\cB$ (see Theorem~\ref{thm:torusrot}). 

The first example of discontinuity of the set of extreme points of a
rotation set with interior was constructed by Tal (personal
communication). 

\subsection*{Question IV. Prevalence}
From the point of view of the parameter~$t$, the typical rotation set
is a rational polygon: this is the rational regular case, which occurs
in the union of the closures of the complementary gaps of~$\cB$, that
is, on a full measure set which contains an open dense subset
of~$[0,1]$.  This is in accord with a result of Passeggi~\cite{pass}
which states that the $C^0$-typical torus \homeo\ has a rotation set
that is a (perhaps degenerate) rational polygon.

An alternative point of view on the relative abundance of the
various types of rotation set is provided by examining the collection
of all rotation sets in the family with the Hausdorff topology. This
space is homeomorphic to a compact interval~$\cR$. Each of the three
types of rotation set is dense in~$\cR$. However the typical rotation
set (in the sense that the collection of such rotation sets contains a
dense $G_\delta$ subset of $\cR$) is of irrational regular type,
having an irrational extreme point which is both totally irrational
and smooth (see Theorem~\ref{thm:torusrot}).

\subsection*{Outline of the paper}

The family $\{\Phi_t\}$ is constructed from a family~$\{f_t\}$ of
continuous self-maps of the figure eight space, in such a way that the
rotation sets of $\Phi_t$ and $f_t$ agree for all~$t$. The rotation
sets of the maps~$f_t$ can in turn be described in terms of {\em digit
  frequency sets} of associated symbolic $\beta$-shifts, which were
analysed in~\cite{beta}.

We will therefore study rotation sets in three different contexts:
torus homeomorphisms, maps of the figure eight space, and symbolic
$\beta$-shifts. In Section~\ref{sec:prelims} we briefly cover relevant
definitions and results from general rotation theory. Necessary
results from~\cite{beta} about digit frequency sets are summarized in
Section~\ref{sec:dig-freq}.

In Section~\ref{sec:8rot} the family~$\{f_t\}$ is constructed, and the
rotation sets $\rho_{\sMZ}(f_t)$ are calculated. Theorem~\ref{thm:eightrot}
is the main statement about the structure of these rotation sets (and
hence about the structure of the rotation sets $\rho_{\sMZ}(\Phi_t)=\rho_{\sMZ}(f_t)$).

In Section~\ref{sec:torusrot} we use a theorem from~\cite{param-fam} to
{\em unwrap} the family~$\{f_t\}$ to the family of torus
homeomorphisms~$\{\Phi_t\}$. Results about dynamical
representatives are contained in Section~\ref{sec:representatives-beta}
(dealing with symbolic $\beta$-shifts) and
Section~\ref{sec:representatives-toruseight} (dealing with the
families $\{f_t\}$ and $\{\Phi_t\}$). Finally, in
Section~\ref{sec:questions}, we pose some questions motivated by the
phenomena observed in the family.

\medskip

For each $n\ge 3$, similar techniques can be used to construct
families of homeomorphisms of the $n$-torus $\T^n$ whose rotation sets
behave analogously to those of the family~$\{\Phi_t\}$: see
Remark~\ref{rmk:higher-dimensions}.

\medskip

The rotation set $\rho_{\sMZ}(\Phi_t)$ can be calculated explicitly
for each value of the parameter~$t$, using the algorithm given
in~\cite{beta} for determining digit frequency sets. In the irrational
case, this means that the sequences of (rational) extreme points
around the boundary of $\rho_{\sMZ}(\Phi_t)$, moving either clockwise
or counterclockwise from the extreme point~$(0,0)$, can be listed as
far as computational accuracy permits. Figure~\ref{fig:rat-rotset}
depicts the rational regular rotation set at $t=3/4$, which is a
quadrilateral with vertices $(0,0)$, $(2/3,0)$, $(3/5,1/5)$, and
$(0,1/2)$ (the dotted lines indicate the rotation set
$\rho_{\sMZ}(\Phi_1)$, which has extreme points $(0,0)$, $(1,0)$ and
$(0,1/2)$). Figure~\ref{fig:reg-rotset} depicts an irrational regular
rotation set at $t\simeq 0.4093$, with a single limiting extreme point
which is smooth and totally irrational (the generic case). Finally,
Figure~\ref{fig:ex-rotset} depicts an irrational exceptional rotation
set at $t\simeq 0.0811$, which has two limiting irrational extreme
points bounding an exceptional interval.

\begin{figure}[htbp]
\includegraphics[width=0.6\textwidth]{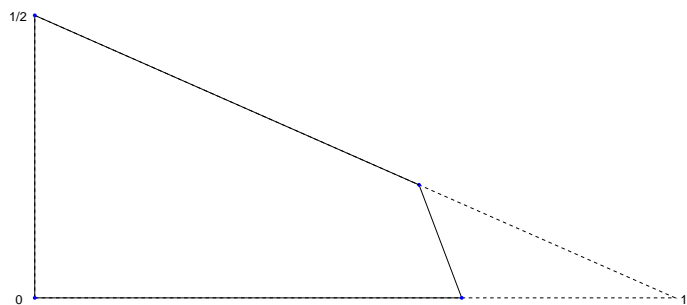}
\caption{The rational regular rotation set $\rho_{\sMZ}(\Phi_{3/4})$}
\label{fig:rat-rotset}
\end{figure}

\begin{figure}[htbp]
\includegraphics[width=0.6\textwidth]{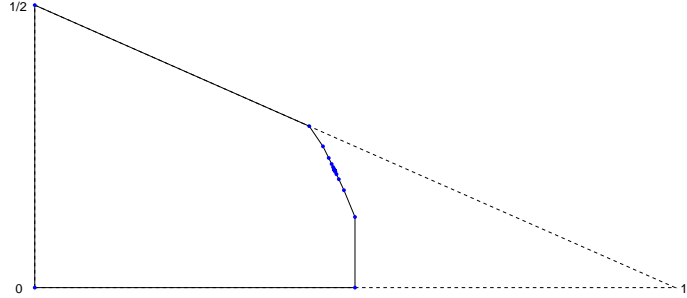}
\caption{An irrational regular rotation set at $t\simeq 0.4093$}
\label{fig:reg-rotset}
\end{figure}

\begin{figure}[htbp]
\includegraphics[width=0.6\textwidth]{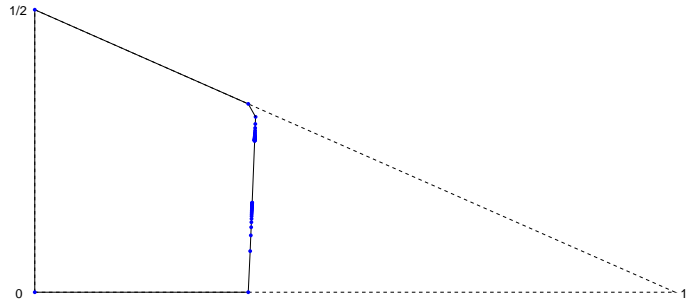}
\caption{An irrational exceptional rotation set at $t\simeq 0.0811$}
\label{fig:ex-rotset}
\end{figure}

\section{Preliminaries} 
\label{sec:prelims}

\subsection{Convex subsets of~$\R^n$}
We start by fixing some terminology and notation associated with
compact convex subsets of~$\R^n$, since there is considerable variance
in the literature.

$\cH(\R^n)$ will denote the set of all non-empty compact subsets
of~$\R^n$ with the Hausdorff topology. The convex hull, topological
boundary, and interior of an element $\Lambda$ of $\cH(\R^n)$ will be
denoted $\Conv(\Lambda)$, $\Bd(\Lambda)$, and $\Intt(\Lambda)$
respectively. 

A point $\bv$ of $\Bd(\Lambda)$ is said to be {\em extreme} if
$\Lambda-\{\bv\}$ is also a convex set; the set of all extreme points of
$\Lambda$ will be denoted $\Ex(\Lambda)$. A point $\bv\in\Ex(\Lambda)$
is a {\em polyhedral vertex} if $\Bd(\Lambda)$ is locally isometric to
the vertex of a polytope at $\bv$. A {\em limit extreme point} is a
limit point of $\Ex(\Lambda)$.

In the case $n=2$, a point $\bv\in\Ex(\Lambda)$ is said to be {\em
  smooth} if~$\Lambda$ has a unique supporting line at~$\bv$; and to
be a {\em vertex} otherwise.

\subsection{Rotation theory}
\label{sec:rot-theory}
In order to determine the rotation sets of the family $\{\Phi_t\}$ of
torus homeomorphisms, we will also need to study the collection of all
averages of observables on two other dynamical systems: symbolic
$\beta$-shifts, and continuous maps of the figure eight space. This study
was termed {\em rotation theory} by Misiurewicz~\cite{M}. In this
section we summarize the basic definitions and results which we will
need.

Let~$Z$ be a compact metric space, with dynamics given by a continuous
map $g\colon Z\to Z$. We will be interested in asymptotic averages of
a bounded and Borel measurable observable $\phi\colon Z\to\R^k$. To
this end, we define an associated {\em dynamical cocycle over~$g$},
denoted $\phi_g\colon Z\times\N\to\R^k$, by
\[
\phi_g(z, r) =\phi(z) + \phi(g(z)) + \dots + \phi(g^{r-1}(z)).
\]
The Birkhoff average of the observable, when it exists, is given by
\[
\hphi_g(z) = \lim_{r\raw\infty}\frac{\phi_g(z, r)}{r},
\]
and the {\em pointwise rotation set} of $g$ with respect to the observable $\phi$ is defined by
\[
\rot_p(Z, g, \phi) = \{\hphi_g(z)\,:\, z\in Z,\,\,\hphi_g(z)\text{ exists} \}.
\]

\MZ~\cite{MZ1} gave an alternative definition of the rotation set, which takes into
account asymptotic averages along subsequences:
 \[
\rot_{\sMZ}(Z, g, \phi) = \{\bv\in\R^k\,:\,
\frac{\phi_g(z_i, r_i)}{r_i} \to \bv \text{ for some sequences $(z_i)$
in~$Z$ and $(r_i)$ in $\N$ with $r_i\to\infty$}\}.
\]
It is evident that $\rot_{\sMZ}(Z, g, \phi)$ is  compact, whereas
$\rot_p(Z, g, \phi)$ need not be.

Let $\cM(g)$ denote the set of $g$-invariant Borel probability
measures on~$Z$, and $\cM_e(g)$ the subset of ergodic measures. Given
$\mu\in\cM(g)$, we write $\rot(\mu,g,\phi) = \int\phi\;d\mu$ for the
$\mu$-average of $\phi$. The {\em measure rotation set} and {\em
  ergodic measure rotation set} are defined by
\begin{align*}
\rot_m(Z, g, \phi) &=
 \{\rot(\mu, g, \phi)\,:\, \mu\in\cM(g)\}, \qquad\text{and}\\
\rot_{em}(Z, g, \phi) &=
 \{\rot(\mu, g, \phi)\,:\, \mu\in\cM_e(g)\}.
\end{align*}

There are inclusions
\begin{equation}
\label{eq:inclusions}
\rot_{em}(Z,g,\phi) \subset \rot_p(Z,g,\phi) \subset \rot_{\sMZ}(Z,g,\phi),
\end{equation}
the former coming from the pointwise ergodic theorem. We shall also
use that
\begin{equation}
\label{eq:m-conv-em}
\rot_m(Z,g,\phi) = \Conv\left(\rot_{em}(Z,g,\phi)\right).
\end{equation}

The proof of the following lemma, which relates the different
definitions of the rotation set in the case of continuous observables,
uses arguments and techniques from~\cite{MZ1} which carry over
without substantial change to the more general context considered here.
Brief details are provided for the reader's convenience. We write $R(g)$ for
the set of recurrent points of~$g$: that is, the set of points $z\in
Z$ with the property that $g^{r_i}(z)\to z$ for some sequence
$r_i\to\infty$.

\begin{lemma}[Misiurewicz and Ziemian]
\label{lem:MZ} 
Let $Z$ be a compact metric space, and suppose that $g\colon Z\to Z$
and $\phi\colon Z\to\R^k$ are continuous.
\begin{enumerate}[a)]
\item For each extreme point $\bv$ of
	$\Conv(\rot_{\sMZ}(Z, g, \phi))$, there is some
  $\mu\in\cM_e(g)$ with $\bv=\rot(\mu,g,\phi)$. In particular, there
  is a point $z\in R(g)$ with $\hphi_g(z)=\bv$.

\item If $\rot_{p}(Z, g, \phi)$ is convex, then
\[
 \rot_{m}(Z, g, \phi)  = \rot_{p}(Z, g, \phi) =
 \rot_{\sMZ}(Z, g, \phi).
\]

\item 
\[
\Conv(\rot_p(R(g), g, \phi)) = \Conv(\rot_{\sMZ}(Z,g,\phi)).
\]
In particular, if $W$ is a $g$-invariant subset of $Z$ containing
$R(g)$, and $\rot_p(W, g, \phi)$ is convex, then
\[ \rot_p(W, g, \phi) = 
\rot_{\sMZ}(Z, g, \phi).\]
\end{enumerate}
\end{lemma}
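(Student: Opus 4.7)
The plan is to first establish the auxiliary identity $\Conv(\rot_{\sMZ}(Z,g,\phi)) = \rot_m(Z,g,\phi)$; all three parts of the lemma then follow from this together with elementary convex analysis and standard ergodic-theoretic tools.

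For the auxiliary identity, I would take an arbitrary $\bv\in\rot_{\sMZ}(Z,g,\phi)$ witnessed by sequences $(z_i)$ in $Z$ and $(r_i)$ in $\N$ with $r_i\to\infty$, and form the empirical measures
\[
\mu_i \;=\; \frac{1}{r_i}\sum_{j=0}^{r_i-1}\delta_{g^j(z_i)}.
\]
Using weak-$*$ compactness of the space of probability measures on $Z$, I extract a convergent subsequence $\mu_i\to\mu$; a Krylov--Bogolyubov telescoping estimate, which requires continuity of $g$, gives $\mu\in\cM(g)$; and continuity of $\phi$ permits exchange of limit and integral to yield $\rot(\mu,g,\phi) = \lim_i\phi_g(z_i,r_i)/r_i = \bv$. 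Hence $\rot_{\sMZ}\subseteq\rot_m$, and convexity of $\rot_m$ upgrades this to $\Conv(\rot_{\sMZ})\subseteq\rot_m$. The reverse inclusion is immediate from~(\ref{eq:inclusions}) and~(\ref{eq:m-conv-em}).

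With the identity in hand, part~(a) is quick: any extreme point of $\Conv(\rot_{\sMZ}) = \rot_m = \Conv(\rot_{em})$ must itself lie in $\rot_{em}(Z,g,\phi)$, so $\bv=\rot(\mu,g,\phi)$ for some ergodic $\mu$, whereupon the Birkhoff ergodic theorem (applied coordinatewise) together with Poincar\'e recurrence produces a representative in $R(g)$. Part~(b) is a short sandwich: the convexity hypothesis combined with~(\ref{eq:inclusions}) gives $\rot_m = \Conv(\rot_{em})\subseteq\rot_p\subseteq\rot_{\sMZ}\subseteq\Conv(\rot_{\sMZ}) = \rot_m$, forcing $\rot_p=\rot_m=\rot_{\sMZ}$. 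Part~(c) uses Minkowski's theorem that a compact convex subset of $\R^k$ equals the convex hull of its extreme points: by part~(a) every extreme point of $\Conv(\rot_{\sMZ})$ lies in $\rot_p(R(g),g,\phi)$, giving $\Conv(\rot_{\sMZ})\subseteq\Conv(\rot_p(R(g),g,\phi))$; the reverse inclusion is immediate, and the ``in particular'' clause then follows by a similar sandwich $\rot_p(R(g))\subseteq\rot_p(W)\subseteq\rot_{\sMZ}(Z)\subseteq\Conv(\rot_p(R(g)))\subseteq\rot_p(W)$, where the last inclusion uses the convexity hypothesis on $\rot_p(W)$.

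The main obstacle, such as it is, lies in the empirical-measure construction underpinning the auxiliary identity: continuity of both $g$ and $\phi$ are essential, the former for invariance of the weak-$*$ limit and the latter for exchanging limit and integral. Consequently the lemma's continuity hypotheses cannot be relaxed to bounded Borel measurability without additional work. Beyond this, the proof is a direct adaptation of the techniques of~\cite{MZ1} to the present more general cocycle setting.
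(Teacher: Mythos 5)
Your proof is correct and follows essentially the same route as the paper's: the paper simply defers the key step (realizing points of $\rot_{\sMZ}$, and in particular extreme points of its convex hull, by invariant and ergodic measures) to Theorem~2.4 of~\cite{MZ1}, whose empirical-measure argument you have inlined, and your parts~(b) and~(c) are the same sandwich-of-inclusions arguments the paper uses. One small caveat: the assertion that an extreme point of $\rot_m=\Conv(\rot_{em})$ must itself lie in $\rot_{em}$ is not a bare convexity fact, since $\rot_{em}$ need not be closed (Milman's theorem would only place the extreme point in its closure); it should instead be justified by observing that the fibre $\{\mu\in\cM(g)\,:\,\rot(\mu,g,\phi)=\bv\}$ is a non-empty compact face of $\cM(g)$, so any of its extreme points is an extreme point of $\cM(g)$ and hence ergodic.
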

\begin{proof}
The first statement of~a) is proved in exactly the same way as
Theorem~2.4 of~\cite{MZ1}, and the second statement follows from the
pointwise ergodic theorem and Poincar\'e recurrence.

Part~b) follows from the observation that, if $\rot_p(Z,g,\phi)$ is convex, then
\[
\rot_m(Z,g,\phi) \subset \rot_p(Z,g,\phi) \subset \rot_{\sMZ}(Z,g,\phi) \subset
\Conv(\rot_{\sMZ}(Z,g,\phi)) \subset \rot_m(Z,g,\phi),
\]
where the first inclusion comes from taking convex hulls
in~(\ref{eq:inclusions}) and using~(\ref{eq:m-conv-em}) and the
convexity of~$\rot_p(Z,g,\phi)$; the second comes
from~(\ref{eq:inclusions}); the third is trivial; and the last comes
from part~a), the convexity of $\rot_m(Z,g,\phi)$, and the fact that,
since $\rot_{\sMZ}(Z,g,\phi)$ is a compact subset of~$\R^k$,
$\Conv(\rot_{\sMZ}(Z,g,\phi))$ is equal to the convex hull of its
extreme points.

For the first statement of~c), we have that
$\Conv(\rot_{\sMZ}(Z,g,\phi)) \subset \Conv(\rot_p(R(g),g,\phi))$ by
part~a). The reverse inclusion holds because $\rot_p(R(g),g,\phi)
\subset \rot_p(Z,g,\phi) \subset\rot_{\sMZ}(Z,g,\phi)$. In particular,
if $R(g)\subset W\subset Z$ and $\rot_p(W,g,\phi)$ is convex, then 
\[
\Conv(\rot_{\sMZ}(Z,g,\phi)) = \Conv(\rot_p(R(g),g,\phi)) \subset
\Conv(\rot_p(W,g,\phi)) = \rot_p(W,g,\phi) \subset \rot_{\sMZ}(Z,g,\phi),
\]
so that $\rot_{\sMZ}(Z,g,\phi)$ is also convex, and the second statement
follows.

\end{proof}

\medskip\medskip

Two observables $\phi,\psi\colon Z\to\R^k$ (and their
cocycles $\phi_g$ and $\psi_g$) are said to be {\em cohomologous with
  respect to~$g$} if there is a bounded measurable function $b\colon
Z\to\R^k$ satisfying
\begin{equation}
\label{eq:cohom1}
\phi(z)-\psi(z) = b(g(z))-b(z)\qquad\text{for all $z\in Z$}
\end{equation}
(in other contexts additional regularity conditions
are imposed on~$b$, but here boundedness suffices). This is equivalent
(see Theorem~2.9.3 of~\cite{HK}) to the existence of a constant~$C$ with
\begin{equation}
\label{eq:cohom2}
||\phi_g(z,r)-\psi_g(z,r)|| < C\qquad\text{for all $z\in Z$ and $r\in\N$}.
\end{equation}

If $\phi$ and $\psi$ are cohomologous then it is immediate
from~(\ref{eq:cohom1}) that $\rot_m(Z,g,\phi) = \rot_m(Z,g,\psi)$; and
from~(\ref{eq:cohom2}) that $\rot_p(Z,g,\phi) = \rot_p(Z,g,\psi)$ and
$\rot_{\sMZ}(Z,g,\phi)=\rot_{\sMZ}(Z,g,\psi)$.

\medskip\medskip

Suppose that $g\colon Z\to Z$ is semi-conjugate to $f\colon Y\to Y$ by
the surjective function $h\colon Z\to Y$, so that $h\circ g = f\circ
h$. Then any observable $\phi\colon Y\to\R^k$ on~$Y$ can be pulled
back to the observable $\psi = \phi\circ h$ on~$Z$, and
$\phi_f(h(z),r) = \psi_g(z,r)$ for all $z\in Z$ and
$r\in\N$. Since~$h$ is surjective, it follows that
\begin{equation}
\label{eq:semiconj}
\rot_p(Z,g,\psi) = \rot_p(Y,f,\phi) \qquad\text{ and }\qquad
\rot_{\sMZ}(Z,g,\psi) = \rot_{\sMZ}(Y,f,\phi).
\end{equation}

\medskip\medskip

We will need to understand how rotation sets transform under a very
simple example of an induced (or return) map. Suppose that $W\subset
Z$, and that there is a natural number~$K$ with the property that, for
every $z\in Z$, there is some $r$ with $1\le r\le K$ such that
$g^r(z)\in W$. Define an observable $N\colon W\to
\R$ taking values in $\{1,2,\ldots,K\}$, by 
\[
N(w) = \min\{r\ge 1\,:\,g^r(w)\in W\},
\]
and let the return map $R\colon W\to W$ be given by $R(w) =
g^{N(w)}(w)$.

Given an observable $\phi\colon Z\to\R^k$, define a corresponding
observable $\Phi\colon W\to\R^k$ by $\Phi(w) = \phi_g(w, N(w))$. Then
for all $w\in W$ and $r\in\N$ we have $ \Phi_R(w,r) =
\phi_g(w,N_R(w,r))$.

Since~$N$ is bounded, it follows that the Birkhoff average
$\hphi_g(w)$ exists if and only if the limit $\lim_{n\to\infty}
\Phi_R(w,n)/N_R(w,n)$ exists, and in this case the two are equal.
Since, moreover, the $g$-orbit of every point of~$Z$ enters~$W$, we
have
\begin{equation}
\label{eq:induce}
\rot_p(Z,g,\phi) = \left\{\lim_{r\to\infty}
\frac{\Phi_R(w,r)}{N_R(w,r)} \,:\, w\in W, \text{ the limit exists}\right\}.
\end{equation}
 The following lemma uses this to calculate $\rot_p(Z,g,\phi)$
 explicitly in the simple case of interest here.

\begin{lemma}
\label{lem:induce}
In the situation above, suppose that there is an observable
$\beta\colon W\to \Delta \subset\R^\ell$ for some compact convex
set~$\Delta$; and that there are linear maps $L\colon\R^\ell\to\R^k$
and $M\colon \R^\ell\to\R$ such that $\Phi = L\circ\beta$ and $N =
M\circ\beta$. Let $Q\colon\R^\ell\to\R^k$ be given by $Q(\bv) =
L(\bv)/M(\bv)$, and suppose that $Q|_\Delta$ is injective. Then
\[
\rot_p(Z,g,\phi) = Q\,(\rot_p(W,R,\beta)).
\]
\end{lemma}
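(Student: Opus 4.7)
The plan is to use formula~(\ref{eq:induce}), which expresses $\rot_p(Z,g,\phi)$ as the set of limits of ratios $\Phi_R(w,r)/N_R(w,r)$, and rewrite each such ratio as $Q$ applied to a Birkhoff-type average of $\beta$ under $R$.

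First I would use linearity to factor the cocycles through~$\beta_R$. Since~$L$ and~$M$ are linear and $\Phi=L\circ\beta$, $N=M\circ\beta$, one has
\[
\Phi_R(w,r)=\sum_{i=0}^{r-1}L(\beta(R^iw))=L(\beta_R(w,r)),
\qquad
N_R(w,r)=M(\beta_R(w,r))
\]
for every $w\in W$ and $r\in\N$. Dividing numerator and denominator by~$r$ and using linearity once more,
\[
\frac{\Phi_R(w,r)}{N_R(w,r)}
=\frac{L(\beta_R(w,r)/r)}{M(\beta_R(w,r)/r)}
=Q\!\left(\frac{\beta_R(w,r)/r}{}\right),
\]
noting that the partial averages $\beta_R(w,r)/r$ lie in the convex hull of $\beta(W)\subset\Delta$, and that $M$ is bounded away from zero there since $N\ge 1$.

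For the inclusion $Q(\rot_p(W,R,\beta))\subset \rot_p(Z,g,\phi)$, I would take $w\in W$ for which $\hat{\beta}_R(w)=\bv\in\Delta$ exists. Continuity of $Q$ at~$\bv$ then gives $\Phi_R(w,r)/N_R(w,r)\to Q(\bv)$, so by~(\ref{eq:induce}) the point $Q(\bv)$ lies in $\rot_p(Z,g,\phi)$. This direction is straightforward.

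The main obstacle is the reverse inclusion: given a point of $\rot_p(Z,g,\phi)$, one must recover an existing Birkhoff average of~$\beta$ under~$R$. By~(\ref{eq:induce}), such a point has the form $\lim_{r\to\infty}\Phi_R(w,r)/N_R(w,r)=\lim_{r\to\infty}Q(\beta_R(w,r)/r)$ for some $w\in W$. I would argue that the averages $\beta_R(w,r)/r$ lie in the compact set $\Delta$, so the sequence has at least one accumulation point. If $\bv_1,\bv_2\in\Delta$ were two such accumulation points, then continuity of~$Q$ would force $Q(\bv_1)=Q(\bv_2)$, and the assumed injectivity of $Q|_\Delta$ would give $\bv_1=\bv_2$. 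Hence $\beta_R(w,r)/r$ converges to a unique $\bv\in\Delta$, so $\bv\in\rot_p(W,R,\beta)$ and the point in question equals $Q(\bv)$. Combining the two inclusions gives the claimed equality.
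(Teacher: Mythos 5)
Your proposal is correct and follows essentially the same route as the paper: factor the cocycles through $\beta_R$ by linearity, use the degree-zero homogeneity of $Q$ to pass to the averages $\beta_R(w,r)/r\in\Delta$, and invoke the injectivity and continuity of $Q$ on the compact set $\Delta$ (the paper phrases this as ``$Q|_\Delta$ is a homeomorphism onto its image,'' which is exactly your accumulation-point argument for the reverse inclusion). The only blemish is the stray empty fraction in your displayed formula, which should simply read $Q(\beta_R(w,r)/r)$.
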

\begin{proof}
By linearity of~$L$ and~$M$ we have $\Phi_R(w,r) = L(\beta_R(w,r))$
and $N_R(w,r) = M(\beta_R(w,r))$ for all~$w\in W$
and~$r\in\N$. Therefore, by~(\ref{eq:induce}), $\rot_p(Z,g,\phi)$ is
the set of all limits of the form $\lim_{r\to\infty} Q(\beta_R(w,r))$;
or, equivalently, since~$Q$ is homogeneous of degree~0, the set of all
limits of the form $\lim_{r\to\infty} Q(\beta_R(w,r)/r)$.  On the
other hand, $\rot_p(W,R,\beta) \subset \Delta$ is the set of all
limits of the form $\lim_{r\to\infty} \beta_R(w,r)/r$. Since
$Q|_\Delta$ is injective and continuous, it is a homeomorphism onto
its image, and the result follows.
\end{proof}

\section{Digit frequency sets of $\beta$-shifts}
\label{sec:dig-freq}

In this section we state some results from~\cite{beta} about the
possible frequencies of symbols (or {\em digits}) which can arise for
elements of symbolic $\beta$-shifts.

Let $\Sigma^+=\{0,1,2\}^\N$ be the one-sided sequence space over the
digits $0$, $1$, and $2$, ordered
lexicographically and endowed with the product topology; and let
$\sigma\colon\Sigma^+\to\Sigma^+$ be the shift map.

An element $\uw$ of $\Sigma^+$ is said to be {\em maximal} if $w_0 =
2$ and $\sigma^r(\uw)\le \uw$ for all $r\ge 0$.  Let
$\Max\subset\Sigma^+$ denote the set of all maximal
sequences. For each $\uw\in\Max$, the {\em symbolic $\beta$-shift}
associated to~$\uw$ is the subshift $\sigma\colon
B(\uw)\to B(\uw)$, where
\begin{equation}
\label{eq:betadef}
B(\uw) = \{\us\in\Sigma^+\,:\,\sigma^r(\us)\le \uw \text{ for all }r\ge
0\}.
\end{equation}

The (continuous) observable of interest is $\kappa:\Sigma^+ \raw
\R^3$, defined by $\kappa(\uw) = \be_{w_0}$ where $\{\be_0, \be_1,
\be_2\}$ is the standard basis of $\R^3$.  Therefore
$\hkappa_\sigma(\uw)\in\Delta$ gives the asymptotic frequency of the
digits in $\uw$, if it exists. Here
\[\Delta = \{\bal\in\R^3_{\ge 0}\,:\,\sum_{i=0}^2 \alpha_i=1\}\]
is the standard 2-simplex.  

The collection of digit frequencies realized in the symbolic
$\beta$-shift $B(\uw)$ is
\begin{equation*}
\DF(\uw) := \rot_p(B(\uw), \sigma, \kappa) \subset \Delta.
\end{equation*} 
We write
\[
\cD = \{\DF(\uw)\,:\,\uw\in\Max\} \subset \cH(\Delta)
\]
for the set of all digit frequency sets, equipped (anticipating
Theorem~\ref{thm:digit-freq}a) below) with the Hausdorff topology and
ordered by inclusion.

A vector $\bv\in\Delta$ is called {\em irrational} if
$\bv\not\in\Q^3$; and is called {\em totally irrational} if there is
no non-zero $\bn\in\Z^3$ with $\bn\cdot\bv = 0$.

\medskip

The following theorem is a summary of results from~\cite{beta} (see
Corollary~17 and Theorems~27, 33, 37, 38, 51, and~54 of that
paper). 

\begin{theorem}\mbox{}
\label{thm:digit-freq}
\begin{enumerate}[a)]
\item $\DF(\uw)$ is a compact, convex subset of $\Delta$ for all
  $\uw\in\Max$.
\item The map $\DF:\Max\raw \cD$ is continuous and non-decreasing.
\item There is a partition $\Max = M_1 \sqcup M_2 \sqcup M_3$ with
  the following properties:
\begin{enumerate}[i)]
\item If $\uw\in M_1$, then $\DF(\uw)$ is a polygon with rational
  vertices and there is an interval $I_{\uw}=[u(\uw), v(\uw)]\subset M_1$
  in $\Max$, with $\DF(\uw)=\DF(\uw')$ if and only if
  $\uw'\in I_{\uw}$.
\item If $\uw\in M_2$, then $\Ex(\DF(\uw))$ consists of infinitely many
  rational polyhedral vertices together with a single irrational limit
  extreme point.
\item If $\uw\in M_3$, then $\Ex(\DF(\uw))$ consists of infinitely many
  rational polyhedral vertices together with two irrational limit
  extreme points, which are endpoints of a line segment in $\Bd(\DF(\uw))$.
\item Each of $M_2$ and $M_3$ is uncountable,
and if $\uw\in M_2\cup M_3$ then $\uw$ is not eventually
 periodic, and $\DF(\uw)\not=\DF(\uw')$ for all $\uw'\not=\uw$.
\item The {\em bifurcation set}
\[
M_2 \cup M_3 \cup \bigcup_{\uw\in M_1}\{u(\uw), v(\uw)\}
\]
is a Cantor set.
\end{enumerate}
 \item $\cD$ is order-preserving homeomorphic to a compact interval,
   and each $\DF(M_i)$ is dense in~$\cD$.
\item There is a dense $G_\delta$ subset of~$\cD$, contained in
  $\DF(M_2)$, consisting of digit frequency sets whose limit extreme
  point is smooth and totally irrational.
\item The map $\Ex\circ\DF:\Max\raw \cH(\Delta)$ is discontinuous at
  each point of~$M_3$ and at $v(\uw)$ for each $\uw\in M_1$; and is
  continuous elsewhere.
\end{enumerate}

\end{theorem}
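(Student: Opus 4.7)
My plan is to treat each $\DF(\uw) = \rot_p(B(\uw),\sigma,\kappa)$ as a rotation set in the sense of Section~\ref{sec:rot-theory}, and to translate combinatorial properties of the maximal sequence $\uw$ into geometric properties of $\DF(\uw)$. Since $\kappa$ is continuous, each $B(\uw)$ is compact and $\sigma$-invariant, and each $\DF(\uw)$ lies in the compact convex simplex $\Delta$, Lemma~\ref{lem:MZ} will be available throughout.

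For part~(a), I would first establish convexity of $\rot_p(B(\uw),\sigma,\kappa)$ by a concatenation argument: given $\us,\us'\in B(\uw)$ realizing digit frequencies $\bal,\bbeta$, I interpolate them in proportion $t:(1-t)$ using ever longer initial blocks, checking that the resulting sequence remains in $B(\uw)$ because the admissibility condition $\sigma^r\cdot\le\uw$ only looks forward. Lemma~\ref{lem:MZ}(b) then identifies $\DF(\uw)$ with $\rot_m(B(\uw),\sigma,\kappa)$, giving both convexity and compactness. Part~(b) is easier: $\uw\le\uw'$ forces $B(\uw)\subseteq B(\uw')$ directly from~(\ref{eq:betadef}), which yields monotonicity of $\DF$; Hausdorff continuity follows by combining upper semicontinuity of $\uw\mapsto B(\uw)$ with the fact that the rational extreme points of $\DF(\uw)$ are realized by periodic orbits that persist under small perturbations of $\uw$.

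The combinatorial core is part~(c), and this is where I expect the main obstacle. The classification should be read off from the orbit of $\uw$ itself under $\sigma$. If $\uw$ is (essentially) periodic, then $B(\uw)$ admits only finitely many distinct ``boundary'' orbit types, and averaging over them forces $\DF(\uw)$ to be a rational polygon; the maximal range $[u(\uw),v(\uw)]$ of $\uw'$ that produces the same list of accessible boundary orbits defines the interval $I_\uw$, delineating $M_1$. For non-eventually-periodic $\uw$, I would analyze the minimal invariant subsets contained in the orbit closure of $\uw$: a single such minimal piece produces one irrational limit extreme point ($M_2$), while two minimal pieces joined by a wandering connection produce two irrational extreme points bounding an exceptional segment ($M_3$). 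Injectivity of $\DF$ on $M_2\cup M_3$ comes from $\uw$ itself being the unique realizer of an extreme itinerary in its $\DF$-class. The bifurcation set is compact, totally disconnected (its gaps being the interiors of the intervals $I_\uw$), and perfect (the endpoints of $M_1$-intervals accumulate on $M_2\cup M_3$), hence a Cantor set.

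Parts~(d)--(f) then follow as consequences. For (d), $\DF$ is a continuous monotone surjection onto $\cD$ which collapses each $I_\uw$ and is otherwise injective, so the quotient of the compact totally ordered space $\Max$ is a compact interval; density of each $\DF(M_i)$ follows from density of $M_i$ in the bifurcation set. For (e), I would run a Baire-category argument in $\cD$: the failure modes ``limit extreme point lies on a rational hyperplane'' and ``limit extreme point is a vertex'' are each countable unions of closed nowhere-dense subsets of $\cD$, so the complementary dense $G_\delta$ is the desired genericity statement. For (f), discontinuity of $\Ex\circ\DF$ at $M_3$-parameters is caused by the two irrational extreme points and the exceptional segment collapsing to or emerging from a single point as $\uw$ is perturbed; at right endpoints $v(\uw)$ of $M_1$-intervals, discontinuity arises because new extreme points appear suddenly as $\uw$ moves out of $I_\uw$; continuity elsewhere is then obtained by tracking how individual extreme points move continuously with $\uw$.
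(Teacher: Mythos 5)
The first thing to note is that the paper does not prove this statement at all: Theorem~\ref{thm:digit-freq} is explicitly a summary of results imported from the companion paper~\cite{beta} (Corollary~17 and Theorems~27, 33, 37, 38, 51 and~54 there), and the actual machinery behind it --- visible in Sections~5 and~6 of the present paper --- is the theory of \emph{infimax sequences} $\cI(\bal)$, the characterization $\bal\in\DF(\uw)\iff\cI(\bal)\le\uw$ of~(\ref{eq:df-char}), a multidimensional continued fraction algorithm, and the Bruin--Troubetzkoy analysis of the minimal sets $C_\bal$. Your sketch replaces all of this with direct combinatorial arguments, and two of its key steps fail. For part~(a), the concatenation argument does not go through as stated: the admissibility condition $\sigma^r(\us)\le\uw$ is \emph{not} preserved under concatenation of initial blocks of two elements of $B(\uw)$, because the shifts whose windows straddle a junction read a word present in neither original sequence, and this word can exceed $\uw$. (This is exactly why the paper, in the proof of Theorem~\ref{thm:v-rep}, must invoke Lemma~7 of~\cite{beta}, which asserts concatenability only for the special repeating blocks $W_{\bu}$ of periodic infimax sequences.) Convexity of $\DF(\uw)$ is one of the substantive results of~\cite{beta}, not a routine interpolation.

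The classification in part~(c) is also structurally wrong. Membership in $M_1$ is not governed by (eventual) periodicity of $\uw$: the plateau intervals $I_\uw=[u(\uw),v(\uw)]$ are nondegenerate, so $M_1$ contains uncountably many non-eventually-periodic sequences whose digit frequency set is nevertheless a rational polygon; the correct statement is that $u(\uw)=\cI(\bal)$ for some rational $\bal$ and $v(\uw)$ is a non-infimax point of the Cantor set $\cJ$, with $M_2\cup M_3$ consisting precisely of the sequences $\cI(\bal)$ for irrational $\bal$. Likewise, your proposed dichotomy for $M_2$ versus $M_3$ --- one minimal piece versus ``two minimal pieces joined by a wandering connection'' --- contradicts what actually happens: in the exceptional case the relevant set $C_\bal=\Cl(o(\cI(\bal),\sigma))$ is a \emph{single} minimal set (since $\cI(\bal)$ is almost periodic) which fails to be uniquely ergodic, carrying exactly two ergodic measures whose rotation vectors are the endpoints of $P_\bal$ (Lemma~\ref{lem:BT}). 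The dichotomy is thus about unique ergodicity of one minimal set, not about the number of minimal sets, and this distinction is precisely what makes the $M_3$ phenomenon (lost measures, absence of $\bv$-minimal sets on $P_\bal$) possible. The Baire-category outline for~(e) and the discontinuity analysis in~(f) are reasonable in spirit, but they rest on the parts of~(c) and~(d) that your sketch does not correctly establish.
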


\begin{remark}
$\DF(\uw) = \rot_p(B(\uw), \sigma, \kappa) = \rot_{\sMZ}(B(\uw),
  \sigma, \kappa) = \rot_m(B(\uw), \sigma, \kappa)$ by part~a) of the
  theorem and Lemma~\ref{lem:MZ}b).
\end{remark}

\section{Rotation sets of a family of maps of the figure eight space}
\label{sec:8rot}
\subsection{The family of maps $f_t\colon X\to X$}
\label{sec:mapdefs}
Let $X=S_1\vee S_2$ be a wedge of two oriented circles $S_1$
and~$S_2$, with respective lengths~5 and~3, meeting at a
vertex~$v$. We use the orientation to define an order on each of the
circles: if $x$ and $y$ belong to the same circle $S_i$, then we say
that $x\le y$ if the oriented arc of $S_i$ from $v$ to $y$
contains~$x$.

Subdivide the circles $S_1$ and $S_2$ into five and three oriented
compact subintervals (edges) of length~1, so that they can be written
as edge-paths (see Figure~\ref{fig:graph-map})
\begin{eqnarray*}
S_1 &=& C\,c\,\one\,\two\,\twor, \quad\text{ and}\\
S_2 &=& A\,B\,b
\end{eqnarray*}
(the motivation for this labelling is that the
images of edges will be orientation-preserving or
orientation-reversing according as they are denoted with upper or
lower case letters). Define a map $f\colon X\to X$ homotopic to the identity, which
expands each edge uniformly by a factor of either 5 or 3 (depending on
whether its
image is $S_1$ or $S_2$), with the oriented edge images given by
\begin{equation}
\label{eq:mapdef}
\begin{array}{llllll}
\text{Edges in $S_1$:\qquad} & f(C) = S_2, \quad & f(c) = S_2^{-1}, \quad & f(\one) = S_1, \quad &
f(\two) = S_1, \quad & f(\twor) = S_1^{-1},\\
\text{Edges in $S_2$:\qquad} & f(A) = S_2, \quad & f(B) = S_1, \quad & f(b) = S_1^{-1},&&
\end{array}
\end{equation}
where $S_i^{-1}$ denotes the circle $S_i$ traversed with reversed
orientation. See the upper part of Figure~\ref{fig:graph-map}, in
which the circles $S_1$ and $S_2$ are drawn horizontally and
vertically respectively, and the images of each circle have been
separated for clarity. 

\begin{figure}[htbp]
\includegraphics[width=0.8\textwidth]{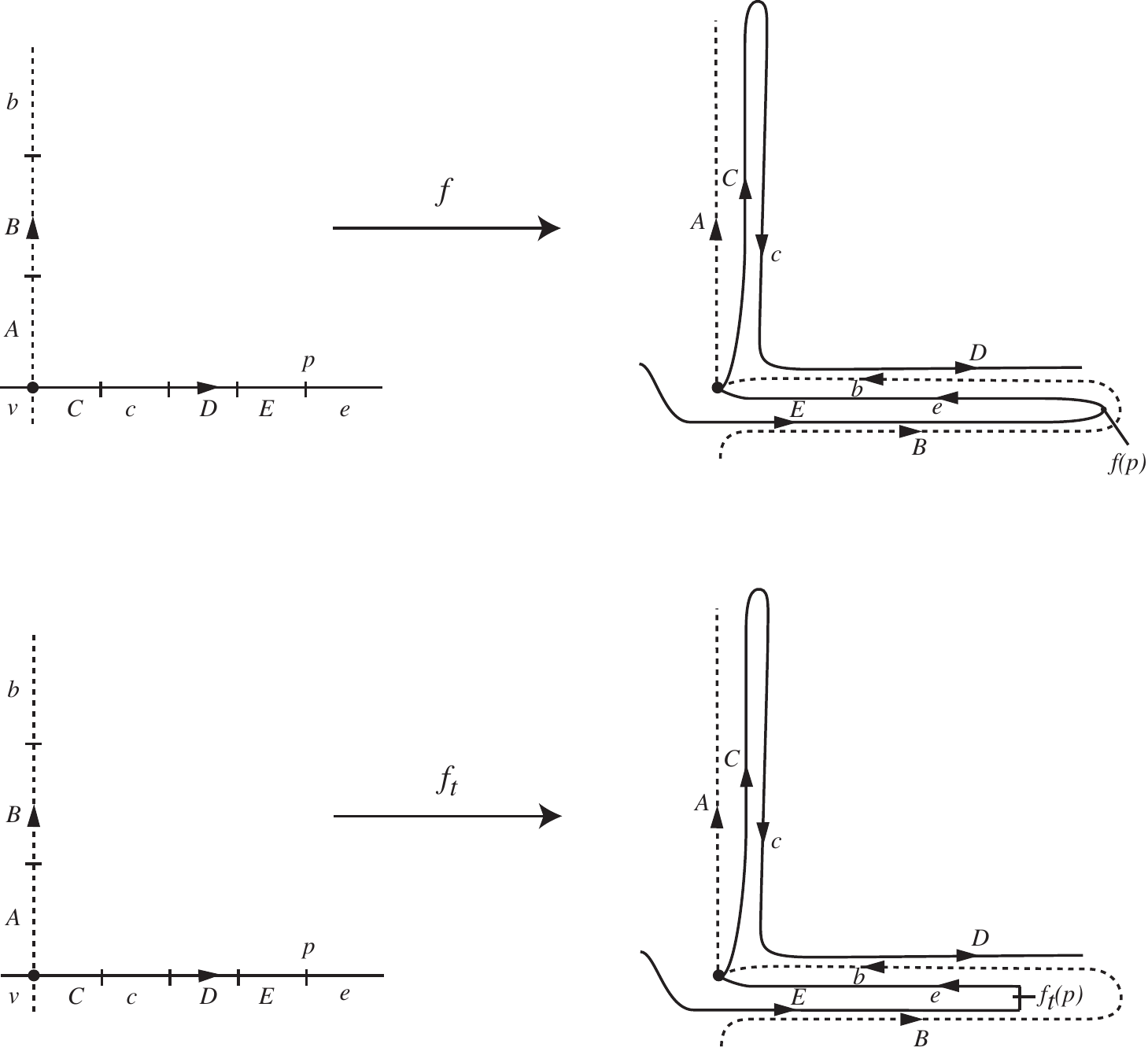}
\caption{The maps $f\colon X\to X$ and $f_t\colon X\to X$}
\label{fig:graph-map}
\end{figure}

Let~$p$ denote the common endpoint of the edges $\two$ and
$\twor$. The parameterized family of maps $f_t\colon X\to X$ is
defined by ``cutting off'' the tip of the transition
$\two\,\twor\mapsto S_1\,S_1^{-1}$, as depicted in the lower part of
Figure~\ref{fig:graph-map}: it is an analog of the stunted tent family
on the interval which, in contrast to the standard tent family, is
full. Define $\ell,r\colon [0,1]\to X$ so that $\ell(t)$ and $r(t)$
are the points of $\two$ and $\twor$ respectively which are distance
$(5-3t)/10$ from~$p$: thus $f(\ell(t))=f(r(t))$ for all~$t$$;
f(\ell(0))$ is the fixed point of~$f$ in $\one$; and $f(\ell(1)) =
p$. Then for each $t\in[0,1]$, let $I_t=[\ell(t),r(t)]\subset
S_1$. The maps $f_t\colon X\to X$ are defined for $t\in[0,1]$ by
\[
f_t(x) = \left\{
\begin{array}{ll}
f(x) \quad & \text{ if }x\not\in I_t,\\
f(\ell(t)) \quad & \text{ if }x\in I_t.
\end{array}
\right.
\]

\medskip

Points of~$X$ can be assigned (perhaps multiple) itineraries
under~$f_t$ belonging to the set
\[
\Sigma_8^+ = \{\uk\in\{A,B,b,C,c,\one,\two,\twor\}^\N\,:\, \text{ for all 
}r\ge 0,\,\,k_{r+1}\in\{A,B,b\}
\text{ if and only if }k_r\in\{A,C,c\}\}
\]
in the standard way: an element $\uk$ of $\Sigma_8^+$ is an
$f_t$-itinerary of $x\in X$ if and only if $f_t^r(x)\in k_r$ for each
$r\in\N$. (The condition that $k_{r+1}\in\{A,B,b\}$ if and only if
$k_r\in\{A,C,c\}$ comes from the transitions specified
in~(\ref{eq:mapdef}).) A point~$x$ has more than one $f_t$-itinerary
if and only if its orbit under~$f_t$ passes through an endpoint of one
of the defining intervals.

Every $\uk\in\Sigma_8^+$ is an $f$-itinerary of a unique~$x\in X$, by the
standard argument: the sets of points of $X$ which have an
$f$-itinerary agreeing with $\uk$ to $r$ symbols form a decreasing
sequence of non-empty compact sets, whose diameter goes to zero
because of the expansion of~$f$.

\subsection{Rotation sets of the maps~$f_t$}
\label{sec:rot-sets-8}
Let $p\colon \tX\to X$ be the universal Abelian cover of~$X$, which we
represent as $\tX = (\R\times\Z)\cup(\Z\times\R) \subset\R^2$, with
the coordinates chosen so that $p(\R\times\Z) = S_1$ and
$p(\Z\times\R) = S_2$. For each~$t\in[0,1]$, let $\tf_t\colon
\tX\to\tX$ be the unique lift of $f_t$ which fixes each integer
lattice point.

The rotation set of $f_t$ is defined using the continuous observable
$\gamma^t\colon X\to \R^2$ defined by $\gamma^t(x) = \tf_t(\tx) - \tx$,
where $\tx\in\tX$ is an arbitrary lift of $x\in X$: we will study the
sets
\[
\rho_8(t) = \rot_p(X, f_t, \gamma^t)
\]
(which will be shown in Theorem~\ref{thm:eightrot} below
to be equal to $\rot_{\sMZ}(X, f_t, \gamma^t)$ and to $\rot_m(X,f_t,\gamma^t)$).

In order to calculate rotation sets using symbolic techniques, it is
convenient to use a discrete version $\Gamma\colon
X\to\Z^2\subset\R^2$ of~$\gamma^t$, defined (except at preimages of~$v$)
by
\[
\begin{array}{ll}
\Gamma(x) = (0,0) \quad & \text{ for }x\in A\cup C\cup c\cup \one,\\
\Gamma(x) = (0,1) \quad & \text{ for }x\in B \cup b, \text{ \ and}\\
\Gamma(x) = (1,0) \quad & \text{ for }x\in \two\cup \twor.
\end{array}
\]

For each $(m,n)\in\Z^2$, let $D(m,n)\subset\tX$ be the fundamental
domain consisting of the points with coordinates $(m+x,n)$ for
$x\in[0,1)$ and $(m,n+y)$ for $y\in[0,1)$. Then (again with the
    exception of preimages of~$v$, for which the rotation vector
    is $(0,0)$), if $\tx\in D(m,n)$ then $\tf_t(\tx)\in
    D((m,n)+\Gamma(x))$, and hence $\tf_t^r(\tx)\in
    D((m,n)+\Gamma_{f_t}(x,r))$. It follows that $||\Gamma_{f_t}(x,r)
    - \gamma^t_{f_t}(x,r)||$ is uniformly bounded over $t\in[0,1]$,
    $x\in X$, and $r\in\N$. Therefore~$\gamma^t$ and $\Gamma$ are
    cohomologous with respect to~$f_t$, and in particular $\rho_8(t) = \rot_p(X, f_t,\Gamma)$. 

\subsection{Invariant subsets}
\label{sec:inv-sub}

In order to compute the rotation sets $\rho_8(t)$ we will make use of
successively smaller $f_t$-invariant subsets of~$X$ which carry the
entire rotation set, and we now introduce these subsets. For each
$t\in[0,1]$, write $o(x,f_t) =\{f_t^r(x)\,:\, r\ge 0\}$ for the orbit
of~$x$ under~$f_t$. Let
\[
W = \{x\in X\,:\, o(x,f) \subset B\cup C\cup \one\cup\two\},
\]
the set of points whose orbits do not enter the interiors of the
orientation reversing intervals or of the interval~$A$. For
each~$t\in[0,1]$ set
\[
X_t = \{x\in X\,:\, o(x,f_t) \subset X-\oI\},
\]
the maximal $f_t$-invariant set on which $f_t=f$; and define 
\[Y_t = W\cap X_t.\]
Then $W$, $X_t$, and $Y_t$ are compact subsets of~$X$;
and $f_t$ and $f$ are equal on $X_t$ and $Y_t$.

It is convenient to reduce the size of the intervals $B$, $C$,
and $\one$ so as to remove common endpoints. Let $B'$ and $D'$ be the
initial segments of $B$ and $D$ of length~$4/5$ (so $f(B')=f(D')
= C\cup \,c\,\cup \one\cup \two$), and let $C'$ be the segment of $C$ with
$f(C')=B'$. Clearly if $x\in W$ then $o(x,f) \subset B'\cup C'\cup
\one'\cup \two$.

\medskip

Since every point~$x$ of $Y_t \cap C'$ has $f(x)\in B'$ and $f^2(x)\in
Y_t\cap S_1$, we can study the dynamics of $f_t$ on~$Y_t$ using the
first return map $R_t\colon Z_t\to Z_t$ to the subset \[Z_t=Y_t\cap
S_1\] of $Y_t$.  Define $F\colon C'\cup \one'\cup \two\to X$ by $F =
f^2$ on $C'$ and $F = f$ on $\one'\cup \two$; then $R_t \colon Z_t\to
Z_t$ is given by $R_t = F|_{Z_t}$.

Define an itinerary map $h_t\colon Z_t\to\Sigma^+$ for $R_t$ by
\[
h_t(x)_r = \left\{
\begin{array}{ll}
0 & \text{ if }R_t^r(x)\in C',\\
1 & \text{ if }R_t^r(x)\in \one', \text{ and}\\
2 & \text{ if }R_t^r(x)\in \two.
\end{array}
\right.
\]
We emphasize again that, since $Y_t$ and $Z_t$ are subsets of~$X_t$,
we have $f_t=f$ on $Y_t$, $R_t = R_1 = F$ on $Z_t$, and $h_t=h_1$ on $Z_t$.

\medskip

Define
\[
\cC = \{t\in [0,1]\,:\, \ell(t) \in Z_t\},
\]
the set of parameters for which $\ell(t)\in W$ and $\ell(t)$ is the
greatest point of $S_1$ on its $f$-orbit. It follows that
\[
\cL := \ell(\cC) = \{x\in[\ell(0),\ell(1)]\,:\, o(x,f) \subset B\cup
C\cup\one\cup\two - (x,p]\}\subset Z_1.
\]
Since $\cL$ is evidently compact and $\ell$ is affine, $\cC$ is also compact. We
can therefore define a function $a\colon[0,1]\to\cC$ by
\[
a(t) = \max\{t'\in\cC\,:\, t'\le t\}.
\]

\medskip

Finally, let $K\colon \cC\to \Sigma^+$ be the ``kneading sequence'' map defined
by $K(t) = h_1(\ell(t))$.

\begin{lemma}\mbox{}
\label{lem:conjtobeta} 
\begin{enumerate}[a)]
\item  $Z_1$ is a Cantor set of Lebesgue measure zero, and $h_1$ is an
  order-preserving topological conjugacy between $R_1\colon Z_1\to
  Z_1$ and $\sigma\colon\Sigma^+\to\Sigma^+$.
\item $\cC$ has Lebesgue measure zero.
\item $K$ is an order-preserving homeomorphism onto its image
\[
K(\cC) =  \{\uw\in\Max\colon \uw \geq 2\overline{1}\},
\]
where the overbar denotes infinite repetition.
\item Let $t\in[0,1]$. Then $Z_t = Z_{a(t)}$, and $h_t$ is an
  order-preserving topological conjugacy between $R_t\colon Z_t\to
  Z_t$ and the symbolic $\beta$-shift $\sigma\colon B(K(a(t)))\to
  B(K(a(t)))$. 
\end{enumerate}
\end{lemma}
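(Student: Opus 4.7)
My plan is to leverage the hyperbolic Markov structure of the return map $R_1$ on $Z_1$. Each of the three branches $R_1|_{C'}$, $R_1|_{\one'}$, $R_1|_{\two \cap Z_1}$ is an orientation-preserving affine expansion: since $f$ has slope $3$ on $C$ and slope $5$ on each of $B$, $\one$, $\two$, the branch slopes are $15$, $5$, $5$. Moreover each branch image is an arc of $S_1$ containing $C' \cup \one' \cup (\two \cap Z_1)$, so the admissibility graph is complete. Part~(a) is then the standard consequence for uniformly expanding Markov maps with complete transition graphs: a nested-interval argument shows $h_1$ is a continuous bijection between $Z_1$ and the full shift $\Sigma^+$ conjugating $R_1$ with $\sigma$, with order preservation following from the orientation preservation of each branch and the order $C' < \one' < \two$ in $S_1$. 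Perfectness of $Z_1$ transfers from that of $\Sigma^+$ through the homeomorphism $h_1$, and Lebesgue measure zero follows because each Markov pullback step multiplies measure by a factor at most $3/5$ (three branches, each contracting by a factor of at least~$5$), so the level-$n$ approximant has measure at most $(3/5)^n$ times the initial measure. Part~(b) is then immediate: $\cL \subset Z_1$ inherits measure zero, and $\ell$ being affine gives $|\cC| = |\ell^{-1}(\cL)| = 0$.

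For part~(c), four properties of $K$ need to be verified. First, $K(t) \in \Max$: $\ell(t) \in \two$ gives $K(t)_0 = 2$, while $\ell(t) \in Z_t$ forces the $R_1$-orbit of $\ell(t)$ to lie in $S_1 \setminus (\ell(t), r(t))$, hence to be $\leq \ell(t)$ in the $S_1$-order; order preservation of $h_1$ converts this into $\sigma^r K(t) \leq K(t)$ for all $r \geq 0$. Second, the computation that $f(\ell(0))$ is the $f$-fixed point in $\one$ gives $K(0) = 2\overline{1}$, and the order preservation of $\ell$ and $h_1$ yields $K(t) \geq K(0) = 2\overline{1}$ throughout $\cC$. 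Third, $K = h_1 \circ \ell|_\cC$ is continuous, and compactness of $\cC$ upgrades this to a homeomorphism onto its image. Finally, for surjectivity: given $\uw \in \Max$ with $\uw \geq 2\overline{1}$, let $x = h_1^{-1}(\uw) \in \two$. Order preservation places $x$ in $[\ell(0), h_1^{-1}(\overline{2})]$; since $h_1^{-1}(\overline{2})$ is the $f$-fixed point in $\two$, which lies strictly below $\ell(1)$, there is a unique $t \in [0,1]$ with $\ell(t) = x$. Maximality of $\uw$ then translates, via order preservation of $h_1$, into the statement that the $f$-orbit of $\ell(t)$ lies in $S_1 \setminus (\ell(t), r(t))$, giving $t \in \cC$ and $K(t) = \uw$.

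Part~(d) is the main obstacle: the core nontrivial content is the identity $Z_t = Z_{a(t)}$. Once this is established, $R_t = R_{a(t)}$ and $h_t = h_1|_{Z_t}$, and the identification $h_t(Z_t) = B(K(a(t)))$ follows by order preservation: $x \in Z_{a(t)}$ iff $R_1^r(x) \leq \ell(a(t))$ for all $r \geq 0$ iff $\sigma^r h_1(x) \leq K(a(t))$ for all $r$ iff $h_1(x) \in B(K(a(t)))$. The inclusion $Z_{a(t)} \subset Z_t$ is immediate from monotonicity, since $s \leq t$ forces $(\ell(s), r(s)) \supset (\ell(t), r(t))$. For the reverse inclusion, I would argue by contradiction: suppose $x \in Z_t \setminus Z_{a(t)}$. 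Then the $f$-orbit of $x$ avoids $(\ell(t), r(t))$ but meets $(\ell(a(t)), r(a(t))) \setminus (\ell(t), r(t))$; since the orbit lies in $W$ and hence avoids $\twor$, the intersection lies in the $\two$-arc $(\ell(a(t)), \ell(t)]$. Let $M$ be the supremum, in the $S_1$-order, of the $\two$-part of the $f$-orbit of $x$; then $M \in (\ell(a(t)), \ell(t)]$ and $M \in Z_1$ by closedness of $Z_1$. The technical heart is verifying that the $f$-orbit of $M$ avoids $(\ell(s), r(s))$, where $s = \ell^{-1}(M)$: by continuity of $f$, for each $r$ we have $f^r(M) = \lim_j f^{k_j + r}(x)$ along a sequence $f^{k_j}(x) \to M$ in $\two$, and when $f^r(M)$ lies in the interior of $\two$ the approximating terms are eventually in $\two$ and hence $\leq M$ by the definition of $M$, so $f^r(M) \leq M$. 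The boundary case $f^r(M) = p$ is excluded because $p$ lies in the open set $(\ell(t), r(t))$ that the approximating orbit terms must avoid. This gives $\ell(s) = M \in Z_s$, so $s \in (a(t), t] \cap \cC$, contradicting the maximality in the definition of $a(t)$.
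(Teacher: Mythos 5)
Your proposal is correct and follows essentially the same route as the paper's proof: part (a) via the full expanding Markov structure of $F$ on $C'\cup\one'\cup\two$, part (b) from (a) via affineness of $\ell$, part (c) by order preservation of $h_1\circ\ell$ together with realizing each maximal $\uw\ge 2\overline{1}$ as the itinerary of a point $\ell(t)$, and part (d) by taking the supremum of the orbit of a hypothetical $x\in Z_t-Z_{a(t)}$ to produce an element of $\cC$ in $(a(t),t]$, followed by the same chain of equivalences identifying $h_t(Z_t)$ with $B(K(a(t)))$. The only (minor, and in fact slightly more careful) deviations are that you locate the realizing point via $h_1^{-1}(\uw)$ and the fixed point $h_1^{-1}(\overline{2})$ rather than the paper's explicit substitution itinerary and its claim $K(1)=\overline{2}$, and that in (d) you take the supremum of the $\two$-part of the full $f$-orbit rather than of the $R_t$-orbit, handling the resulting boundary cases explicitly.
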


\begin{proof}
\begin{enumerate}[a)]
\item $F\colon C'\cup\one'\cup\two\to X$ maps each of the three
  disjoint compact intervals $C'$, $\one'$ and $\two$ affinely over
  all three of the intervals, with slope at least~5. Since $Z_1$ is
  the set of points whose $F$-orbits are contained in these
  intervals, and $R_1$ is the restriction of $F$ to $Z_1$, the
  result follows by standard arguments.
\item Since $\ell$ is an affine map and $\ell(\cC) \subset Z_1$, the
  result follows from~a).
\item $K=h_1\circ\ell$ is an order-preserving homeomorphism onto its
  image since both $h_1$ and $\ell$ are order-preserving, continuous,
  and injective, and $\cC$ is compact. $K(\cC)\subset\Max$, since if
  $\uw = K(t)$ for some $t\in\cC$ then we have $\sigma^r(\uw) =
  \sigma^r(h_1(\ell(t)) = h_1(R_1^r(\ell(t))) = h_1(R_t^r(\ell(t)))
  \le h_1(\ell(t)) = \uw$, using $\ell(t)\in Z_t$ for $t\in\cC$. Moreover
  $K(0) = 2\overline{1}$, since $f(\ell(0))$ is the fixed point of~$f$
  in~$\one$, so that $K(t)\ge 2\overline{1}$ for all $t\in\cC$.

It therefore only remains to show that every maximal sequence $\uw\ge
2\overline{1}$ is in the image of~$K$. Let~$\uk\in\Sigma_8^+$ be the
sequence obtained from~$\uw$ by the substitution $0\mapsto CB$,
$1\mapsto\one$, and $2\mapsto\two$, and let $x\in X$ be a point
with itinerary~$\uk$.  Since $K(0) = 2\overline{1} \le \uw\le
\overline{2} = K(1)$ we have $x\in [\ell(0), \ell(1)]$, so that there
is some $t\in[0,1]$ with $\ell(t) = x$. Then $x\in Z_t$ by maximality
of~$\uw$, so that $t\in\cC$ and $\uw = K(t)$.
\item Since $a(t)\le t$ we have $Z_{a(t)}\subset Z_t$. To show
  equality, suppose for a contradiction that there is some $x\in Z_t -
  Z_{a(t)}$. Since~$Z_t$ is compact and $R_t$-invariant, $y =
  \sup\{R_t^r(x)\,:\,r\ge 0\}$ is also an element of~$Z_t$, so that
  $y\le\ell(t)$. Moreover, $R_t^r(y)\le y$ for all $r\ge 0$ by
  continuity of $R_t$, so that $y\in\cL$ and hence $y=\ell(t')$ for
  some $t'\in\cC$. On the other hand, $y>\ell(a(t))$ since $x\not\in
  Z_{a(t)}$. Therefore $t'\in (a(t),t]\cap\cC$, contradicting the
definition of $a(t)$.

Since $h_t=h_1|_{Z_t}$ and $R_t = R_1|_{Z_t}$, it follows from
part~a) that $h_t$ conjugates $R_t\colon Z_t\to Z_t$ and $\sigma\colon
h_t(Z_t)\to h_t(Z_t)$.  We therefore need only show that $h_t(Z_t) =
B(K(a(t)))$. Now given $x\in Z_1$ we have
\begin{eqnarray*}
x\in Z_t & \iff & x\in Z_{a(t)} \\
& \iff & R_1^r(x) \le \ell(a(t)) \text{ for all $r\ge 0$}\\
& \iff & \sigma^r(h_1(x)) \le h_1(\ell(a(t))) \text{ for all $r\ge
  0$}\\
& \iff & h_1(x)\in B(h_1(\ell(a(t)))) = B(K(a(t))).
\end{eqnarray*}
Here the first equivalence is what we have just proved; the second is
the definition of $Z_{a(t)}$ (or, more particularly, of $X_{a(t)}$);
the third follows from part~a); and the fourth is by the
definition~(\ref{eq:betadef}) of $B(h_1(\ell(a(t))))$. Therefore
$h_1(Z_t) = B(K(a(t)))$, and so $h_t(Z_t) = B(K(a(t)))$ as required,
since $h_t$ and $h_1$ agree on $Z_t$.
\end{enumerate}
\end{proof}

\subsection{Calculation of the rotation sets $\rho_8(t)$}
In this section we apply Lemma~\ref{lem:conjtobeta}d) to relate the
rotation set $\rho_8(t)$ to the digit frequency set $\DF(K(a(t)))$,
and use this relationship together with Theorem~\ref{thm:digit-freq}
to describe the collection of rotation sets $\rho_8(t)$.

\begin{theorem}
\label{thm:rotequal} 
Let $t\in[0,1]$. Then $\rho_8(t)= \Pi(\DF(K(a(t))))$, where
$\Pi\colon\Delta\to\R^2$ is defined by
\[
\Pi(\alpha_0,\alpha_1,\alpha_2) = \left(
\frac{\alpha_2}{1+\alpha_0},\,
\frac{\alpha_0}{1+\alpha_0}
\right).
\]
\end{theorem}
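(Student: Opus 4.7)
The plan is to reduce $\rho_8(t) = \rot_p(X, f_t, \Gamma)$ via a first-return procedure to the symbolic $\beta$-shift on $B(K(a(t)))$, where the conjugacy of Lemma~\ref{lem:conjtobeta}d) identifies the rotation set with $\DF(K(a(t)))$ up to a projective reparametrization that will turn out to be $\Pi = L/M$, absorbing the non-uniform return times.

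For the inclusion $\Pi(\DF(K(a(t)))) \subset \rho_8(t)$, Lemma~\ref{lem:induce} applies with $Z = Y_t$, $W = Z_t$, $g = f_t|_{Y_t}$, $\phi = \Gamma$, and $K = 2$: the set $Y_t$ is compact and $f_t$-invariant (being $W \cap X_t$), and $Y_t \setminus Z_t \subset B'$ returns to $Z_t$ in one $f_t$-step. Direct inspection of the first-return cocycle yields $\Phi(w) = (0,1)$, $(0,0)$, or $(1,0)$ and $N(w) = 2$, $1$, or $1$ according as $w$ lies in $C'$, $\one'$, or $\two$. Setting $\beta = \kappa \circ h_t\colon Z_t \to \Delta$ via the conjugacy of Lemma~\ref{lem:conjtobeta}d), one finds $\Phi = L \circ \beta$ and $N = M \circ \beta$ with
\[
L(\alpha_0,\alpha_1,\alpha_2) = (\alpha_2,\alpha_0) \quad\text{and}\quad M(\alpha_0,\alpha_1,\alpha_2) = 2\alpha_0 + \alpha_1 + \alpha_2 = 1+\alpha_0 \text{ on } \Delta,
\]
so that $Q = L/M = \Pi$. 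Injectivity of $\Pi|_\Delta$ is immediate: its second coordinate $\alpha_0/(1+\alpha_0)$ determines $\alpha_0$, the first then determines $\alpha_2$, and $\alpha_1 = 1 - \alpha_0 - \alpha_2$. Applying Lemma~\ref{lem:induce} together with the semi-conjugacy identity~(\ref{eq:semiconj}) for $h_t$ gives $\rot_p(Y_t, f_t, \Gamma) = \Pi(\DF(K(a(t)))) \subset \rho_8(t)$.

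The reverse inclusion is the main obstacle, requiring control of $f_t$-orbits that exit $Y_t$. Given $x \in X$ with existing rotation vector $v$ under $f_t$, the plan is to handle three behaviors. If the orbit is eventually in $Y_t$, then $v \in \rot_p(Y_t, f_t, \Gamma)$ directly. If the orbit visits an orientation-reversing edge ($c$, $b$, or $\twor$) or the edge $A$, one exploits the identities $\Gamma(C)=\Gamma(c)$, $\Gamma(B)=\Gamma(b)$, $\Gamma(\two)=\Gamma(\twor)$, and $\Gamma(A) = (0,0) = \Gamma(\one)$ to substitute matching symbol frequencies inside $Y_t$: each $A$-sojourn behaves as an extra $\one$-visit in the $\beta$-shift encoding, contributing $0$ to $\Gamma$ and $1$ to the return time. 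If the orbit enters the cut interval $\oI$, it becomes eventually periodic by resetting to $f(\ell(t))$, and using $Z_t = Z_{a(t)}$ from Lemma~\ref{lem:conjtobeta}d) one shows that the resulting rational rotation vector coincides with $\Pi(\alpha)$ for the digit frequency of a suitable periodic $\sigma$-orbit in $B(K(a(t)))$.

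The hardest step is verifying that rotation vectors arising from $A$-detours or from orbits passing through $\oI$ lie in $\Pi(\DF(K(a(t))))$. The decisive fact is that $\Pi(\DF(K(a(t))))$ is convex, being the image of the convex set $\DF(K(a(t)))$ under the projective map~$\Pi$: once $(0,0) = \Pi(0,1,0)$ and the $\Pi(\alpha)$ encoding the $S_1$-part of the orbit are both known to lie in the target, every intermediate vector produced by inserting $A$-detours or $\oI$-re-entries can be located on the line segment between them, completing the $\subset$ inclusion.
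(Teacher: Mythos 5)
Your forward direction --- the reduction of $\rot_p(Y_t,f_t,\Gamma)$ to $\Pi(\DF(K(a(t))))$ via Lemma~\ref{lem:induce} and the conjugacy of Lemma~\ref{lem:conjtobeta}d), with $L(\bal)=(\alpha_2,\alpha_0)$, $M(\bal)=1+\alpha_0$ on $\Delta$, and the injectivity of $\Pi$ --- is correct and is exactly the paper's Steps~3 and~4. The gaps are in the reverse inclusion, which is where the substance of the theorem lies. For orbits visiting $A$ or the orientation-reversing edges, it is not enough to ``substitute matching symbol frequencies,'' and convexity does not rescue you: your segment argument presupposes that the frequency vector of the ``$S_1$-part'' of the orbit already lies in $\Pi(\DF(K(a(t))))$, and certifying that is precisely the hard step. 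One must exhibit an actual point of $X_t$ (equivalently an admissible sequence in $B(K(a(t)))$ after encoding) realizing the surgered itinerary, and the surgery shifts the positions of the symbol $\two$, so a priori the new orbit could enter the cut interval $\oI$ even though the original did not. The paper's Step~2 does real work here: an ordering argument showing $f^r(z)\le f^r(x)$ after replacing lower-case symbols, followed by a two-case replacement of each block $u_i\,C\,A^{L_i}\,B$ designed so that every $\two$ in the new itinerary corresponds to a point $\le\ell(t)$. Your proposal omits this admissibility verification entirely.

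The treatment of orbits entering $\oI$ is also incomplete. Such an orbit is eventually periodic only when the orbit of $\ell(t)$ itself returns to $\oI$ (otherwise one takes $y=\ell(t)\in X_t$ and there is nothing periodic about it). In the eventually periodic case the periodic orbit passes through the plateau, i.e.\ through points of $\two\cup\twor$ strictly greater than $\ell(t)$, so its itinerary is \emph{not} admissible in $B(K(a(t)))$, and the identity $Z_t=Z_{a(t)}$ gives you no ``suitable periodic $\sigma$-orbit'' directly. The paper resolves this with a fixed-point-index continuation: the plateau periodic point $q_t$ has index $+1$, persists under perturbation of the parameter with constant rational rotation vector of bounded denominator, and at the smallest parameter $s$ of its continuation the point $q_s$ is an endpoint of $I_s$, hence lies in $X_s\subset X_t$. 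Without this argument (or a substitute for it), the rational rotation vectors created by the cut are not accounted for, and the inclusion $\rho_8(t)\subset\Pi(\DF(K(a(t))))$ is not established.
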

\begin{remark}
$\Pi$ is a projective homeomorphism onto its image, with inverse $\Pi^{-1}\colon
\Pi(\Delta)\to\Delta$ given by
\[
\Pi^{-1}(x,y) = \left(
\frac{y}{1-y},\,\frac{1-x-2y}{1-y},\,\frac{x}{1-y}
\right).
\] 
\end{remark}
\begin{proof}	
We will prove in successive steps that
\[
\rho_8(t) = \rot_p(X,f_t,\Gamma) = \rot_p(X_t, f,\Gamma) =
\rot_p(Y_t, f, \Gamma) = \Pi(\rot_p(Z_t, F, \beta)) = \Pi(\DF(K(a(t)))),
\]
where $\beta\colon Z_t\to\Delta\subset\R^3$ is the observable which
takes the values $\be_0$, $\be_1$, and $\be_2$ in the intervals $C$,
$\one$, and $\two$ respectively. The first of these equalities, that
$\rho_8(t)=\rot_p(X,f_t,\Gamma)$, was established in
Section~\ref{sec:rot-sets-8}.

\medskip

\noindent\textbf{Step 1: $\rot_p(X, f_t, \Gamma) = \rot_p(X_t, f, \Gamma)$}

Suppose that $x\in X - X_t$ and that $\hGamma_{f_t}(x)$ exists. We
will find a point $y\in X_t$ with $\hGamma_{f_t}(y) =
\hGamma_{f_t}(x)$. This will establish that $\rot_p(X,f_t,\Gamma) =
\rot_p(X_t,f_t, \Gamma)$, and the result follows since $f=f_t$ on $X_t$.

Since $x\in X-X_t$ there is some $r\in\N$ for which $f_t^r(x)\in\oI$,
and hence $\hGamma_{f_t}(x) = \hGamma_{f_t}(\ell(t))$. If
$o(\ell(t),f_t) \cap\oI=\emptyset$ then we can take $y=\ell(t)\in
X_t$. Suppose, therefore, that there is some least $r\ge 1$
with $q_t:=f_t^r(\ell(t))\in\oI$. Then $f_t(q_t) = f_t(\ell(t))$, so
that $q_t$ is a period~$r$ point of~$f_t$. Since $f_t$ is locally
constant at $q_t$, we have $\operatorname{index}(q_t, f_t^r) =
+1$. Therefore $q_t$ can be continued to fixed points $q_s$ of $f_s^r$
for $s$ in a neighborhood of~$t$. Since $\hGamma_{f_s}(q_s) =
\widehat{\gamma^s}_{f_s}(q_s)$ has rational coordinates with denominator at
most~$r$, and varies continuously with $s$, we have
$\hGamma_{f_s}(q_s) = \hGamma_{f_t}(x)$ for all~$s$.

Let~$s$ be the smallest parameter for which the continuation $q_s$
exists. Then $q_s$ must be an endpoint of $I_s$, so that $q_s\in X_s
\subset X_t$. Taking $y=q_s\in X_t$ we have $\hGamma_{f_t}(y) =
\hGamma_{f_s}(q_s) = \hGamma_{f_t}(x)$ as required, since $f_t=f_s$ on $X_s$.

\medskip

\noindent\textbf{Step 2: $\rot_p(X_t, f, \Gamma) = \rot_p(Y_t, f, \Gamma)$}

Suppose that $x\in X_t$ and that $\hGamma_f(x)$ exists. We will find
a point $y\in Y_t$ with $\hGamma_f(y)=\hGamma_f(x)$, which will
establish the result.

Let $\uk\in\Sigma_8^+$ be an itinerary of~$x$, and let
$\uk'\in\Sigma_8^+$ be obtained by replacing every occurrence of $b$,
$c$, or $\twor$ in~$\uk$ with its orientation-preserving counterpart
$B$, $C$, or $\two$. Let $z\in X$ be a point with
itinerary~$\uk'$. Then we have
\begin{enumerate}[a)]
\item $o(z,f) \subset A\cup B\cup C\cup\one\cup\two$;
\item $\hGamma_f(z) = \hGamma_f(x)$; and
\item $z\in X_t$.
\end{enumerate}

Both a) and b) are obvious from the replacements which have been
carried out. For~c), observe that for each $r$, the points $f^r(x)$
and $f^r(z)$ lie on the same circle $S_1$ or $S_2$. Moreover
$f^r(z)\le f^r(x)$. For suppose $f^r(z)\not=f^r(x)$, and let $s \ge 0$
be least such that $k'_{r+s} \not= k_{r+s}$. Then $f^{r+i}(z)$ and
$f^{r+i}(x)$ pass through the same orientation-preserving intervals
for $0\le i < s$, and $f^{r+s}(z) < f^{r+s}(x)$.

It follows that $f^r(z)\not\in\oI$ for all~$r$; for otherwise we
would have $f^r(x)\in \two\cup\twor - \oI$, and hence
$f^{r+1}(z)>f^{r+1}(x)$. Therefore $z\in X_t$ as required.

\medskip

To complete the proof of step 2 we need to remove all occurrences of
the symbol~$A$ from~$\uk'$. We can assume that there are infinitely
many such, since otherwise we can take $y=f^r(z)$ for some $r$ large
enough that $\sigma^r(\uk')$ contains no symbol~$A$, and then $y\in
Y_t$ with $\hGamma_f(y) = \hGamma_f(z) = \hGamma_f(x)$. We can also
assume that $\uk'$ contains infinitely many symbols distinct from~$A$,
since otherwise $\hGamma_f(x) = (0,0)$ and we can choose~$y\in Y_t$ to
have itinerary $\overline{\one}$.

Write
\[\uk' = u_1\,C\,A^{L_1}\,B\,\,\,\, u_2\,C\,A^{L_2}\,B\,\,\,\, u_3\,C\,A^{L_3}\,B\,\,\,\,\ldots\]
where each $L_i\ge 1$ and the $u_i$ are (possibly empty) words
which do not contain the symbol~$A$. Since $\uk'$ doesn't contain the
symbols $b$, $c$, and $\twor$, there is a unique way to do this: each
maximal subword of the form $A^L$ must be preceded by $C$ and
followed by $B$, and the $u_i$ are the subwords which separate the
subwords $C\,A^L\,B$.

Let $\uk''$ be the sequence obtained by replacing each word $u_i\, C\,
A^{L_i}\, B$ with the word
\begin{itemize}
\item $u_i\, C\, B\, \one^{L_i}$ if $u_i$ does not contain the
  symbol~$\two$;
\item $w_1\,\one\, w_2\,\two\, C\,B\,\one^{L_i-1}$ if $u_i = w_1\,\two\,w_2$
  for some words $w_1$ and $w_2$ for which $w_2$ does not contain the symbol
  $\two$ (so that $u_i\, C\, A^{L_i}\, B = w_1\, \two\, w_2\, C\, A^{L_i}\, B$).
\end{itemize}

Then $\uk''\in\Sigma_8^+$ by choice of the replacement words, so that there is
a unique $y\in X$ with $f$-itinerary~$\uk''$. By the choice of the
replacement words we have $||\Gamma_f(y,r) - \Gamma_f(z,r)|| \le 1$
for all $r\in\N$, so that $\hGamma_f(y) = \hGamma_f(z) =
\hGamma_f(x)$. Since $y\in W$, it only remains to show that $y\in
X_t$; that is, that $f^r(y)\le \ell(t)$ whenever $f^r(y)\in\two$.

Suppose then that $k''_r = \two$. This symbol~$\two$ must be contained
in one of the replacement blocks
$w_1\,\one\,w_2\,\two\,C\,B\,\one^{L_i-1}$ of the second kind. If it
is contained in the word~$w_1$ then $f^r(y)\le f^r(z)\le\ell(t)$,
since the itineraries of $f^r(y)$ and $f^r(z)$ agree (and contain only
orientation-preserving symbols) up to the point where $f^{r+s}(y)\in
\one$ and $f^{r+s}(z)\in\two$. If it is not contained in the
word~$w_1$ then, since $w_2$ does not contain the symbol~$\two$, we
have $f^{r+1}(y)\in C$. Therefore $f^r(y)\le\ell(0)\le\ell(t)$, since
$f(\ell(0))\in\one$.

\medskip

\noindent\textbf{Step 3: $\rot_p(Y_t,f,\Gamma) = \Pi(\rot_p(Z_t, F, \beta))$}

We use Lemma~\ref{lem:induce}
applied to the return map $F\colon Z_t\to Z_t$ induced by $f\colon
Y_t\to Y_t$. The return time $N\colon Z_t\to\R$ is given by $N=2$ on
$C$ and $N=1$ on $\one\cup\two$; and the observable $\Phi\colon
Z_t\to\R^2$ corresponding to $\Gamma\colon Y_t\to\R^2$ takes values
$(0,1)$ on~$C$, $(0,0)$ on $\one$, and $(1,0)$ on $\two$. 

Now $\Phi = L\circ\beta$, where $L\colon\R^3\to\R^2$ is given by
$L(x,y,z) = (z,x)$; and $N = M\circ\beta$, where $M\colon\R^3\to\R$ is
given by $M(x,y,z) = 2x+y+z$. Now if $\bv\in\Delta$ then $\Pi(\bv) =
L(\bv)/M(\bv)$ (since $2v_0+v_1+v_2 = 1+v_0$). Since $\Pi$ is
injective, it follows from Lemma~\ref{lem:induce} that
$\rot_p(Y_t,f,\Gamma) = \Pi(\rot_p(Z_t, F, \beta))$ as required.

\medskip

\noindent\textbf{Step 4: $\Pi(\rot_p(Z_t, F, \beta)) = \Pi(\DF(K(a(t))))$}

This is immediate from Lemma~\ref{lem:conjtobeta}d) and the definition
$\DF(K(a(t))) = \rot_p(B(K(a(t))), \sigma, \kappa)$, since the
observables $\beta$ on $Z_t$ and $\kappa$ on $B(K(a(t)))$ correspond
under the conjugacy $h_t$.
\end{proof}

We now apply Theorem~\ref{thm:rotequal} in conjunction with
Theorem~\ref{thm:digit-freq} to describe the rotation sets
$\rho_8(t)$. Write $\cR = \{\rho_8(t)\,:\,t\in[0,1]\}\subset
\cH(\R^2)$, ordered by inclusion. We say that a vector
$\bv=(v_1,v_2)\in\R^2$ is {\em planar totally irrational} if $v_1$,
$v_2$, and $1$ are rationally independent. This condition is
equivalent to minimality of the translation of the torus whose lift is
$z\mapsto z+\bv$.

\begin{theorem} \mbox{}
\label{thm:eightrot}
\begin{enumerate}[a)]
\item Let~$t\in[0,1]$. Then $\rho_8(t) = \rot_p(X, \gamma^t,f_t) =
\rot_{\sMZ}(X,\gamma^t,f_t) = \rot_m(X, \gamma^t,f_t)$ is compact and
convex.
\item The map $\rho_8\colon [0,1]\to \cR$ is continuous and
  non-decreasing. 
\item The {\em bifurcation set}~$\cB\subset[0,1]$ of parameters~$t$ at which
  $\rho_8$ is not locally constant is a measure zero Cantor set. There
  is a partition $\cB=B_1\sqcup B_2\sqcup B_3$ with the following
  properties:
\begin{enumerate}[i)]
\item The set $B_1$ consists of the endpoints of the complementary
  gaps of~$\cB$. On each such gap, $\rho_8(t)$ is a constant polygon
  with rational vertices.
\item If $t\in B_2$ then $\Ex(\rho_8(t))$ consists of infinitely many
  rational polyhedral vertices together with a single irrational limit
  extreme point.
\item If $t\in B_3$ then $\Ex(\rho_8(t))$ consists of infinitely many
  rational polyhedral vertices together with two irrational limit
  extreme points, which are endpoints of a line segment in
  $\Bd(\rho_8(t))$.
\item Each of $B_2$ and $B_3$ is uncountable, and if $t\in B_2\cup
  B_3$ then $\rho_8(t)\not=\rho_8(t')$ for all $t'\not=t$.
\end{enumerate}
\item $\cR$ is order-preserving homeomorphic to a compact interval,
  and each $\rho_8(B_i)$ is dense in~$\cR$.
\item There is a dense $G_\delta$ subset of $\cR$, contained in
  $\rho_8(B_2)$, consisting of rotation sets whose limit extreme point
  is smooth and planar totally irrational.
\item The map $\Ex\circ\,\rho_8\colon [0,1]\to\cH(\R^2)$ is
  discontinuous at each point of $B_3$ and at the right-hand endpoints
  of the complementary gaps of~$\cB$, and is continuous elsewhere.
\end{enumerate}

\end{theorem}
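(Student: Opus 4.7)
The overall strategy is to translate each statement of Theorem~\ref{thm:digit-freq} through the identity $\rho_8(t)=\Pi(\DF(K(a(t))))$ of Theorem~\ref{thm:rotequal}. The first step I would carry out is to record the relevant properties of the projective homeomorphism $\Pi\colon\Delta\to\Pi(\Delta)\subset\R^2$: as the restriction of a projective transformation of $\R^3$ to the simplex~$\Delta$, $\Pi$ sends line segments to line segments, and hence convex sets to convex sets, extreme points to extreme points, polyhedral vertices to polyhedral vertices, and limit extreme points to limit extreme points. The explicit formulae for $\Pi$ and $\Pi^{-1}$ show that $\Pi$ restricts to a bijection between rational points of $\Delta$ and rational points of $\Pi(\Delta)$; moreover, using the relation $\alpha_0+\alpha_1+\alpha_2=1$, a $\Z$-linear dependence among the coordinates of $\Pi(\bal)$ and $1$ can be converted into one among $\alpha_0,\alpha_1,\alpha_2$ and conversely, so that totally irrational points of $\Delta$ correspond exactly to planar totally irrational points of $\Pi(\Delta)$.

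Given these correspondences, parts~(a), (c)(i)--(iv), (d) and~(e) follow by direct transfer through $\Pi\circ\DF\circ K\circ a$. For~(a), $\rho_8(t)$ is compact and convex as the image of the compact convex set $\DF(K(a(t)))$ under the continuous injection~$\Pi$, and the coincidence of $\rot_p$, $\rot_{\sMZ}$ and $\rot_m$ then comes from Lemma~\ref{lem:MZ}(b). Monotonicity in~(b) is immediate since $a$, $K$ (Lemma~\ref{lem:conjtobeta}(c)), and $\DF$ (Theorem~\ref{thm:digit-freq}(b)) are all non-decreasing and $\Pi$ preserves inclusion. The partition $B_1\sqcup B_2\sqcup B_3$ in~(c) is obtained by pulling back $M_1\sqcup M_2\sqcup M_3$ via $K\circ a$, with the injectivity of $\rho_8$ on $B_2\cup B_3$ in (c)(iv) following from the injectivity of $\DF$ on $M_2\cup M_3$ combined with injectivity of $K$ and of $\Pi|_\Delta$; the Cantor structure of $\cB$ and its zero Lebesgue measure follow from Theorem~\ref{thm:digit-freq}(c)(v) and Lemma~\ref{lem:conjtobeta}(b). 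Parts~(d) and~(e) are direct transfers of Theorem~\ref{thm:digit-freq}(d), (e) using the observations about $\Pi$ above.

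The main obstacle is the continuity statement in~(b). Since $a\colon[0,1]\to\cC$ is only upper semi-continuous --- it is constant at $t'$ throughout each complementary gap $(t',t'')$ of $\cC$ and then jumps to $t''$ at the right endpoint --- continuity of $\rho_8$ at $t''$ requires the equality $\DF(K(t'))=\DF(K(t''))$. The crucial claim I would establish is that every complementary gap $(t',t'')$ of $\cC$ is mapped by $K$ onto the pair of endpoints of some plateau interval $I_\uw=[u(\uw),v(\uw)]\subset M_1$ (in the sense of Theorem~\ref{thm:digit-freq}(c)(i)), so that $\DF(K(t'))=\DF(K(t''))=\DF(\uw)$. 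The starting point is that, by Lemma~\ref{lem:conjtobeta}(c), $K(\cC)=\{\uw\in\Max\,:\,\uw\ge 2\overline{1}\}$; any maximal sequence strictly between $K(t')$ and $K(t'')$ automatically satisfies $\uw\ge 2\overline{1}$ and hence lies in $K(\cC)$, contradicting adjacency. Therefore $K(t')$ and $K(t'')$ are adjacent in $\Max$ itself, and the combinatorial theory of $\beta$-shifts then identifies such adjacent pairs with the endpoints of two-point plateau intervals $I_\uw\subset M_1$. With this correspondence in hand, $\DF\circ K\circ a$ is constant on $[t',t'']$, and continuity of $\rho_8$ on $[0,1]$ follows from continuity of $\DF$, $K$ on $\cC$, and $\Pi$.

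Part~(f) will then follow directly from Theorem~\ref{thm:digit-freq}(f): discontinuities of $\Ex\circ\DF$ at points of $M_3$ transfer via the continuous maps $a$, $K$, and $\Pi$ to discontinuities of $\Ex\circ\rho_8$ at the corresponding points of $B_3$, while the discontinuities at the upper endpoints $v(\uw)$ of the $M_1$-plateaus match, under the gap--plateau correspondence of the previous paragraph, with the right-hand endpoints of the complementary gaps of~$\cB$; continuity elsewhere is immediate from continuity of $\Ex\circ\DF$ elsewhere composed with continuous maps. The hard part of the whole argument is thus the combinatorial identification of gaps of $\cC$ with two-point $M_1$-plateaus, upon which both the continuity in~(b) and the discontinuity description in~(f) rest.
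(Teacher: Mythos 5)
Your overall strategy --- pushing Theorem~\ref{thm:digit-freq} through $\Pi\circ\DF\circ K\circ a$, with the projective map $\Pi$ carrying convexity, extreme points, rationality and total irrationality across --- is exactly the paper's, and you correctly isolate the crux: continuity of $\rho_8$ at the right-hand endpoint $t''$ of a complementary gap $(t',t'')$ of $\cC$ requires $\DF(K(t'))=\DF(K(t''))$, and your observation that $K(t')$ and $K(t'')$ are adjacent in $\Max$ (any maximal sequence between them is automatically $\ge 2\overline{1}$, hence in $K(\cC)$) is correct and is also the paper's first step.

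The gap is in how you pass from adjacency to equality of the digit frequency sets. You assert that ``the combinatorial theory of $\beta$-shifts identifies such adjacent pairs with the endpoints of two-point plateau intervals $I_{\uw}\subset M_1$.'' This is not among the quoted results and is false: a plateau $I_{\uw}=[u(\uw),v(\uw)]$ is an interval in $\Max$ which in general contains many maximal sequences besides its endpoints (this is exactly why the paper later needs the second function $b$ alongside $a$: $K(a(t))$ is typically not an infimax), so adjacent maximal sequences need only be \emph{consecutive elements of} some $I_{\uw}$, not its endpoints. For part~(b) the needed conclusion is still recoverable from the quoted results: if $\DF(K(t'))\subsetneq\DF(K(t''))$, then density of $\DF(M_2)$ in the interval $\cD$ (Theorem~\ref{thm:digit-freq}d)) plus monotonicity of $\DF$ produces a maximal sequence strictly between $K(t')$ and $K(t'')$, a contradiction; then c)iv) forces both into $M_1$ and c)i) gives the constant rational polygon --- this is the paper's citation. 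But the false identification does real damage to parts~(c)i) and~(f): the complementary gaps of $\cB$ are the preimages of the full plateau interiors $(u(\uw),v(\uw))$ and are unions of many gaps of $\cC$ together with points of $\cC-\cB$, and the discontinuities of $\Ex\circ\rho_8$ inherited from Theorem~\ref{thm:digit-freq}f) occur at the points $K^{-1}(v(\uw))$, i.e.\ at the right-hand endpoints of gaps of $\cB$, not of gaps of $\cC$. Your argument as written would predict a discontinuity of $\Ex\circ\rho_8$ at the right endpoint of \emph{every} gap of $\cC$, which contradicts the statement being proved (and contradicts local constancy of $\rho_8$ on the interior of a plateau). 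A related slip: in~(f) you invoke ``the continuous maps $a$, $K$, and $\Pi$'', but $a$ is not continuous, as you yourself note earlier; the transfer of both continuity and discontinuity needs the one-sided analysis of $a$ that you began for part~(b).
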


\begin{proof}
Let $t\in[0,1]$. $\DF(K(a(t)))$ is compact and convex for each~$t$ by
Theorem~\ref{thm:digit-freq}a). It follows from
Theorem~\ref{thm:rotequal} and Lemma~\ref{lem:conjtobeta}c) that
$\rho_8(t):=\rot_p(X,\gamma^t,f_t)$ is also compact and convex, since
$\Pi$ is a projective homeomorphism onto its image. Equality with
$\rot_{\sMZ}(X,\gamma^t,f_t)$ and $\rot_m(X,\gamma^t,f_t)$ follows from
Lemma~\ref{lem:MZ}b) and the continuity of~$\gamma^t$. This establishes
part~a).

If $t\in\cC$ then $\rho_8(t) = \Pi(\DF(K(t)))$ by
Theorem~\ref{thm:rotequal} and the definition of~$a$. Moreover, if
$J=(t_1,t_2)$ is a complementary component of~$\cC$, then $\rho_8(t) =
\rho_8(t_1)$ for all $t\in J$. Since $K\colon \cC\to
\{\uw\in\Max\,:\,\uw\ge 2\overline{1}\}$ is an order-preserving
homeomorphism by Lemma~\ref{lem:conjtobeta}c), it follows that
$\rho_8(t_2) = \rho_8(t_1)$ also. In particular, because
$K(t_1)\not=K(t_2)$, Theorem~\ref{thm:digit-freq}c)iv) and c)i) give
that $\rho_8(t)$ is a constant polygon with rational vertices
for $t\in[t_1,t_2]$. In particular, $K(t_1)$ and $K(t_2)$ are consecutive
maximal sequences contained in one of the intervals of
Theorem~\ref{thm:digit-freq}c)i). 

Thus $\rho_8$ is constant on the closure of each complementary interval
of~$\cC$; while $\rho_8|_{\cC} = \Pi\circ\DF\circ K$ is continuous and
non-decreasing by Lemma~\ref{lem:conjtobeta}c) and
Theorem~\ref{thm:digit-freq}b). This establishes part~b).

The bifurcation set~$\cB$ of $\rho_8$ is therefore contained in~$\cC$,
and in particular has measure zero by Lemma~\ref{lem:conjtobeta}b). $\cB$
is the preimage of the bifurcation set of $\DF$, and can therefore be
partitioned as $\cB=B_1\sqcup B_2\sqcup B_3$, where (using the
notation of Theorem~\ref{thm:digit-freq}) $B_2 = K^{-1}(M_2)$,
$B_3 = K^{-1}(M_3)$, and
\[
B_1 = K^{-1}\left(\bigcup_{\uw\in M_1}\{u(\uw), v(\uw)\}\right).
\]
It is a Cantor set by Theorem~\ref{thm:digit-freq}c)v) and
Lemma~\ref{lem:conjtobeta}c). The remaining statements of the theorem
are just translations of the corresponding statements of
Theorem~\ref{thm:digit-freq}, using the fact that $\Pi$ is a projective
homeomorphism onto its image and the observation that an element $\bal$ of
$\Delta$ is totally irrational if and only $\Pi(\bal)$ is planar
totally irrational.
\end{proof}

\section{Rotation sets of a family of torus homeomorphisms}
\label{sec:torusrot}
In this section we will construct a continuously varying family
$\{\Phi_t\}$ of self-homeomorphisms of the torus whose rotation sets
$\rho(t)$ satisfy $\rho(t)=\rho_8(t)$ for all $t\in[0,1]$. To do this,
we use Theorem~3.1 of~\cite{param-fam} to ``unwrap'' the
family~$f_t$. This theorem is a generalization of a result of Barge
and Martin~\cite{bargemartin} to parameterized families. It states
(using definitions given below) that if~$\{f_t\}$ is a continuously
varying family of continuous self-maps of a {\em boundary retract}~$X$
of a manifold~$M$, satisfying a certain topological condition ({\em
  unwrapping}), then there is a continuously varying family
$\{\varphi_t\}$ of self-homeomorphisms of~$M$ such that $f_t$ and
$\varphi_t$ share their essential dynamical properties for
each~$t$. 

In Section~\ref{sec:unwrap} we state a version of the theorem which is
customized to the requirements of this paper. The theorem will then be
applied in Section~\ref{sec:torus-family} to construct the family of
torus homeomorphisms~$\{\Phi_t\}$ and show that
$\rho(\Phi_t)=\rho_8(t)$ for all~$t$. All parameterized families of
maps in this section will be assumed to have parameter~$t$ varying
over~$[0,1]$. 

\subsection{Unwrapping parameterized families}
\label{sec:unwrap}

\begin{defn}[Boundary retraction]
\label{defns:unwrap}
Let~$M$ be a compact manifold with non-empty boundary $\partial M$
and~$X$ be a compact subset of~$M$. A continuous map $\Psi\colon
\partial M\times[0,1]\to M$ is said to be a {\em boundary retraction
  of~$M$ onto~$X$} if it satisfies the following properties:
\begin{enumerate}[(1)]
\item $\Psi$ restricted to $\partial M \times[0,1)$ is a homeomorphism
  onto $M - X$,
\item $\Psi(\eta,0) = \eta$ for all $\eta\in\partial M$, and
\item $\Psi(\partial M \times\{1\}) = X$.
\end{enumerate}
\end{defn}

Therefore $\Psi$ decomposes $M$ into a continuously varying
family of arcs $\{\gamma_\eta\}_{\eta\in\partial M}$ defined by
$\gamma_\eta(s) = \Psi(\eta,s)$, whose images are mutually disjoint
except perhaps at their final points, which cover~$X$. In particular,
every point of $M-X$ can be written uniquely as $\Psi(\eta,s)$ with
$\eta\in\partial M$ and $s\in[0,1)$.

\begin{defns}[Unwrapping of a family, Associated family of
    near-homeomorphisms]

Let $\Psi\colon\partial M\times[0,1]\to M$ be a boundary retraction of
$M$ onto $X$, and $R\colon M\to X$ be the retraction defined by
$R(\Psi(\eta,s)) = \Psi(\eta,1)$. An {\em unwrapping} of a
continuously varying family $\{f_t\}$ of continuous maps
\mbox{$f_t\colon X\to X$} is a continuously varying family
$\{\barf_t\}$ of self-homeomorphisms of~$M$ with the property that,
for each~$t$,
\begin{enumerate}[(1)]
\item $R\circ \barf_t|_X = f_t$, and
\item $\barf_t$ is the identity on $\partial M$.
\end{enumerate}

Suppose that $\{\barf_t\}$ is an unwrapping of
$\{f_t\}$. Let $\lambda\colon[0,1]\to[0,1]$ be given by
$\lambda(s)=2s$ for $s\in[0,1/2]$ and $\lambda(s) = 1$ for $s\in[1/2,1]$,
and define $\Upsilon\colon M\to M$ by $\Upsilon(\Psi(\eta,s)) =
\Psi(\eta, \lambda(s))$, which is well defined since $\lambda(1)=1$. Write
$N(E) = \Psi(\partial M\times[1/2,1])$, a compact neighborhood of~$X$ which is
homeomorphic to $M$ by the homeomorphism $S\colon M\to N(E)$ defined
by $S(\Psi(\eta,s)) = \Psi(\eta, (s+1)/2)$, and satisfies $\Upsilon(N(E))=X$. Let
 $\{\barF_t\}$ be the family of self-homeomorphisms of~$M$
which is defined by $\barF_t = S\circ\barf_t\circ S^{-1}$ in $N(E)$, and $\barF_t
= \id$ in $M-N(E)$.

The {\em family of near-homeomorphisms $\{H_t\}$
  associated to the unwrapping $\{\barf_t\}$} is defined
by \[H_t = \Upsilon\circ\barF_t\colon M\to M.\]
\end{defns}

\begin{remarks}\mbox{}
\label{rmk:H_t}
\begin{enumerate}[a)]
\item $H_t|_X = R\circ \barf_t|_X = f_t$.
\item $H_t|_{\partial  M}$ is the identity.
\item If~$C$ is a compact subset of~$M$ disjoint from $\partial M$
  then there is some~$N\ge 0$ with $H_t^N(C)\subset X$ for all $t$,
  since $H_t(\Psi(\eta,s)) = \Psi(\eta,2s)$ if $s\le 1/2$ and
  $H_t(\Psi(\eta,s))\in X$ if $s\ge 1/2$.
\end{enumerate}
\end{remarks}

\begin{theorem}[\cite{param-fam}]
\label{thm:param-fam}
Let $M$ be a compact manifold with boundary $\partial M$, $\Psi$ be a
boundary retraction of $M$ onto a subset~$X$, and $\{f_t\}$ be a
continuously varying family of continuous surjections $X\to
X$. Suppose that an unwrapping $\{\barf_t\}$ of $\{f_t\}$ exists, and
let $\{H_t\}$ be the associated family of near-homeomorphisms.

  Then for every~$\veps>0$ there is a continuously varying family
  $\{\varphi_t\}$ of self-homeomorphisms of~$M$, and a
  Hausdorff-continuously varying family $\{\Lambda_t\}$ of compact
  $\varphi_t$-invariant subsets of~$M$ having the following properties
  for each $t\in[0,1]$.
\begin{enumerate}[a)]
\item There is a continuous map $g_t\colon M\to M$ within $C^0$
  distance $\veps$ of the identity, with $g_t(\Lambda_t) = X$, such
  that $H_t\circ g_t = g_t\circ\varphi_t$. In particular, $f_t\circ
  g_t|_{\Lambda_t} = g_t\circ\varphi_t|_{\Lambda_t}$.
\item $\varphi_t|_{\Lambda_t}$ is topologically conjugate to the
  natural extension $\hf_t$ acting on the inverse limit space
  $\varprojlim(X, f_t)$.
\item $\varphi_t$ is the identity on $\partial M$.
\item The non-wandering set $\Omega(\varphi_t)$ of $\varphi_t$ is contained in
  $\Lambda_t\cup\partial M$.
\end{enumerate}

\end{theorem}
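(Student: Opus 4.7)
The plan is to follow the Brown--Barge--Martin template: approximate each near-homeomorphism $H_t$ by a homeomorphism $\varphi_t$, exploiting Brown's theorem that a compact manifold is homeomorphic to the inverse limit of any of its near-homeomorphisms. The parameterized setting requires upgrading Brown's approximation so that both the approximating homeomorphism and the associated semi-conjugacy depend continuously on $t \in [0,1]$, which is the content advertised by \cite{param-fam}.

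First I would check that each $H_t = \Upsilon \circ \barF_t$ is a near-homeomorphism of $M$: off the collar $\Psi(\partial M \times [0, 1/2])$ it is already a homeomorphism, while on the collar it collapses in the radial direction, and such a map is a uniform limit of homeomorphisms that fix $\partial M$. A parameterized version of this approximation then produces the family $\{\varphi_t\}$ of self-homeomorphisms of $M$, together with a family $\{g_t\}$ of continuous surjections, both varying continuously in $t$ and within $C^0$-distance $\veps$ of the identity, satisfying $H_t \circ g_t = g_t \circ \varphi_t$. Property c) comes essentially for free because $H_t|_{\partial M} = \id$ by Remark~\ref{rmk:H_t}b), so the approximation can be performed rel $\partial M$.

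Set $\Lambda_t = g_t^{-1}(X)$. Since $H_t(X) \subset X$ by Remark~\ref{rmk:H_t}a), the semi-conjugacy immediately yields $\varphi_t(\Lambda_t) = \Lambda_t$ and $f_t \circ g_t|_{\Lambda_t} = g_t \circ \varphi_t|_{\Lambda_t}$, proving a). For b), I would use that in the Brown construction $g_t$ is realized as the evaluation-at-zero map of a topological conjugacy between $\varphi_t$ and the shift on $\varprojlim(M, H_t)$, so $\Lambda_t$ corresponds to the sub-inverse-limit $\varprojlim(X, f_t|_X)$, on which the shift is precisely $\hf_t$. Hausdorff continuity of $t \mapsto \Lambda_t$ follows because $g_t$ varies continuously and remains $\veps$-close to $\id_M$.

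The main obstacle I anticipate is property d). Given $z \in M - (\Lambda_t \cup \partial M)$, choose a compact neighborhood $C$ of $z$ that is still disjoint from $\Lambda_t \cup \partial M$. By Remark~\ref{rmk:H_t}c) there is $N$ with $H_t^N(C) \subset X$, and the semi-conjugacy propagates this to force $\varphi_t^n(C)$ toward $\Lambda_t$ as $n \to \infty$. For the backward direction, $\varphi_t^{-1}$ is conjugate via $g_t$ to a selected inverse branch of $H_t$, and on the collar the map $\Upsilon(\Psi(\eta,s)) = \Psi(\eta, \lambda(s))$ doubles the radial coordinate on $[0,1/2]$; inverse iteration therefore drives points toward $\partial M$. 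Choosing $\veps$ small enough to preserve both pushoff regimes for $\varphi_t$ yields a uniform $n_0$ with $\varphi_t^n(C) \cap C = \emptyset$ for all $|n| \geq n_0$, so $z \notin \Omega(\varphi_t)$, and d) follows.
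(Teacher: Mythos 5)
There is a genuine gap, centred on your definition $\Lambda_t = g_t^{-1}(X)$. Under the Brown conjugacy $h_t\colon\varprojlim(M,H_t)\to M$ (with $g_t=\pi_0\circ h_t^{-1}$ and $\varphi_t=h_t\circ\hatH_t\circ h_t^{-1}$), the set $g_t^{-1}(X)$ corresponds to $\pi_0^{-1}(X)=\{\ux\,:\,x_0\in X\}$, whereas the set you need for part b) is the sub-inverse-limit $\{\ux\,:\,x_k\in X\text{ for all }k\ge 0\}\cong\varprojlim(X,f_t)$. These differ: a point $x_0\in X$ has $H_t$-preimages far outside $X$ (all of $\barF_t^{-1}(\Psi(\eta,s))$ with $s\ge 1/2$ collapses into $X$ under $\Upsilon$), so $\pi_0^{-1}(X)$ is strictly larger and is not $\hatH_t^{-1}$-invariant. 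Correspondingly your $\Lambda_t$ satisfies $\varphi_t(\Lambda_t)\subset\Lambda_t$ but not $\varphi_t(\Lambda_t)=\Lambda_t$ (the semi-conjugacy gives $g_t(\varphi_t^{-1}(z))\in H_t^{-1}(g_t(z))$, which need not lie in $X$ even when $g_t(z)\in X$), and $\varphi_t|_{\Lambda_t}$ is not conjugate to $\hf_t$. The correct choice is $\Lambda_t=\bigcap_{k\ge 0}\varphi_t^k\bigl(g_t^{-1}(X)\bigr)=h_t\bigl(\varprojlim(X,f_t)\bigr)$, the maximal invariant subset of $g_t^{-1}(X)$; here surjectivity of $f_t$ is what guarantees $g_t(\Lambda_t)=X$.

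This error propagates into your argument for d). What you actually prove is $\Omega(\varphi_t)\subset g_t^{-1}(X)\cup\partial M$: your forward argument only excludes points $z$ with $g_t(z)\notin X$, and says nothing about a point $z$ with $g_t(z)\in X$ but $g_t(\varphi_t^{-k}(z))\notin X$ for some $k>0$, which lies outside the correct $\Lambda_t$. The paper handles exactly this case by working upstairs in $\varprojlim(M,H_t)$: given $\ux$ with $x_k\in M-(X\cup\partial M)$ for some $k$, take the neighborhood $U=\{\uy\,:\,y_k\in C\}$ for a compact $C\ni x_k$ disjoint from $X\cup\partial M$, and observe that $\hatH_t^r(\uy)_k=H_t^r(y_k)\in X$ for $r\ge N$ by Remark~\ref{rmk:H_t}c), so $\hatH_t^r(U)\cap U=\emptyset$. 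Your backward-iteration discussion is both unnecessary (for a homeomorphism $\Omega(\varphi_t)=\Omega(\varphi_t^{-1})$, so forward returns suffice) and not rigorous as stated, since ``$\varphi_t^{-1}$ is conjugate via $g_t$ to a selected inverse branch of $H_t$'' is not an iterable semi-conjugacy relation. Finally, note that everything up to and including the existence of the continuously varying $\varphi_t$, $g_t$, $\Lambda_t$ is precisely the content of the cited theorem of~\cite{param-fam}; the only part the present paper proves is the strengthened statement d), so that is where the argument needs to be watertight.
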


Thus $\varphi_t|\Lambda_t$ is semi-conjugate to $f_t$, and $\Lambda_t$
contains all of the non-trivial recurrent dynamics of $\varphi_t$.

\medskip

That $H_t\circ g_t = g_t\circ \varphi_t$ is not contained
in theorem~3.1 of~\cite{param-fam}, but is explicitly
stated in its proof. Statement~d) of Theorem~\ref{thm:param-fam} is
slightly stronger than the corresponding statement
in~\cite{param-fam}, and we now sketch its proof. 

For each~$t$, let $M^t_\infty = \varprojlim(M, H_t)\subset M^\N$ be
the inverse limit of $H_t\colon M\to M$, and $\hatH_t\colon
M^t_\infty \to M^t_\infty$ be the natural extension of
$H_t$. Corollary~2.3 of~\cite{param-fam} provides a family of
homeomorphisms $h_t\colon M^t_\infty\to M$. In the proof of
theorem~3.1 of~\cite{param-fam}, the homeomorphisms $\varphi_t$ are
defined by $\varphi_t = h_t\circ \hatH_t\circ h_t^{-1}$, and the subsets
$\Lambda_t$ are given by $\Lambda_t = h_t(K_t)$, where
\[K_t = \{\ux\in M^t_\infty\,:\, x_k\in X \text{ for all }k\ge 0\}.\]
It therefore suffices to show that, for all $t$, $\Omega(\hatH_t)$ is
contained in the union of $K_t$ and
\[\partial M^t_\infty = \{\ux\in M^t_\infty\,:\,x_0\in\partial M\} =
\{\ux\in M^t_\infty\,:\,x_k\in\partial M \text{ for all }k\ge 0\}.\]
Now if $\ux\in M^t_\infty -  (K_t\cup\partial M^t_\infty)$ then
there is some~$k$ with $x_k\in M - (X \cup \partial M)$. Let~$C$
be a compact neighborhood of $x_k$ in~$M$ which is disjoint from
$X\cup \partial M$, and define 
\[U = \{\uy\in M^t_\infty\,:\,y_k\in C\},\]
a neighborhood of $\ux$ in $M^t_\infty$. Let~$N$ be large enough that
$H_t^N(C) \subset X$ (see Remark~\ref{rmk:H_t}c)): then
$\hatH_t^r(\uy)_k \in X$ for all $\uy\in U$ and $r\ge N$, so that
$\hatH_t^r(U)\cap U = \emptyset$ 
for all $r\ge N$ as required.

\subsection{The family of torus homeomorphisms}
\label{sec:torus-family}

Let $\pi\colon\R^2\to\T^2$ be the universal cover of the torus $\T^2 =
\R^2/\Z^2$, and let $M\subset \T^2$ be the torus with a hole obtained
by excising an open square~$S$ of side length $1/2$ centred in the
fundamental domain of the torus: that is,
\[
M = \T^2  -  S, \,\,\text{ where }\,\, S = \pi\left(\left\{
(x,y)\in\R^2\,:\, x\bmod 1\in(1/4,3/4) 
     \text{ and } y\bmod 1\in(1/4,3/4)
\right\}\right).
\]
We regard~$X = S_1\vee S_2$ as the subset of~$M$ given by $S_1 =
\pi([0,1]\times\{0\})$ and $S_2 = \pi(\{0\}\times[0,1])$. For each
$\eta\in\partial M$, let $\gamma_\eta\colon[0,1]\to M$ be the arc
in~$M$ whose image is a segment of the straight line passing through
the centre of the fundamental domain and $\eta$, parameterized
proportionally to arc length, so that $\gamma_\eta(0)=\eta$ and
$\gamma_\eta(1)\in X$ (see the dotted lines on
Figure~\ref{fig:unwrapping}). These arcs define a boundary retraction
$\Psi\colon \partial M\times[0,1] \to M$ of~$M$ onto~$X$, each point
of~$X$ being the endpoint of two of the arcs, with the exception of
the vertex~$v$ which is an endpoint of four arcs. The associated
retraction $R\colon M\to X$ is defined by $R(\Psi(\eta,s)) =
\Psi(\eta,1)$.

Let $\barf_1\colon M\to M$ be a homeomorphism unwrapping $f_1\colon
X\to X$ as depicted in Figure~\ref{fig:unwrapping}: the images of
$S_1$ and $S_2$ under $\barf_1$ are shown with solid and dashed lines
respectively, so that $R\circ\barf_1|_X = f_1$, and $\barf_1$ is then
extended arbitrarily to a homeomorphism $M\to M$ which is the identity
on $\partial M$. (Note that $\barf_1$ is injective on~$X$, since
$f_1(p)=p$, where $p$ is the common endpoint of the edges $\two$ and $\twor$
of $S_1$ as depicted in Figure~\ref{fig:graph-map}.) Postcomposing $\barf_1$
with a suitable isotopy supported in the disk~$D$ of
Figure~\ref{fig:unwrapping} yields an unwrapping $\{\barf_t\}$ of the
family~$\{f_t\}$.

\begin{figure}[htbp]
\includegraphics[width=0.45\textwidth]{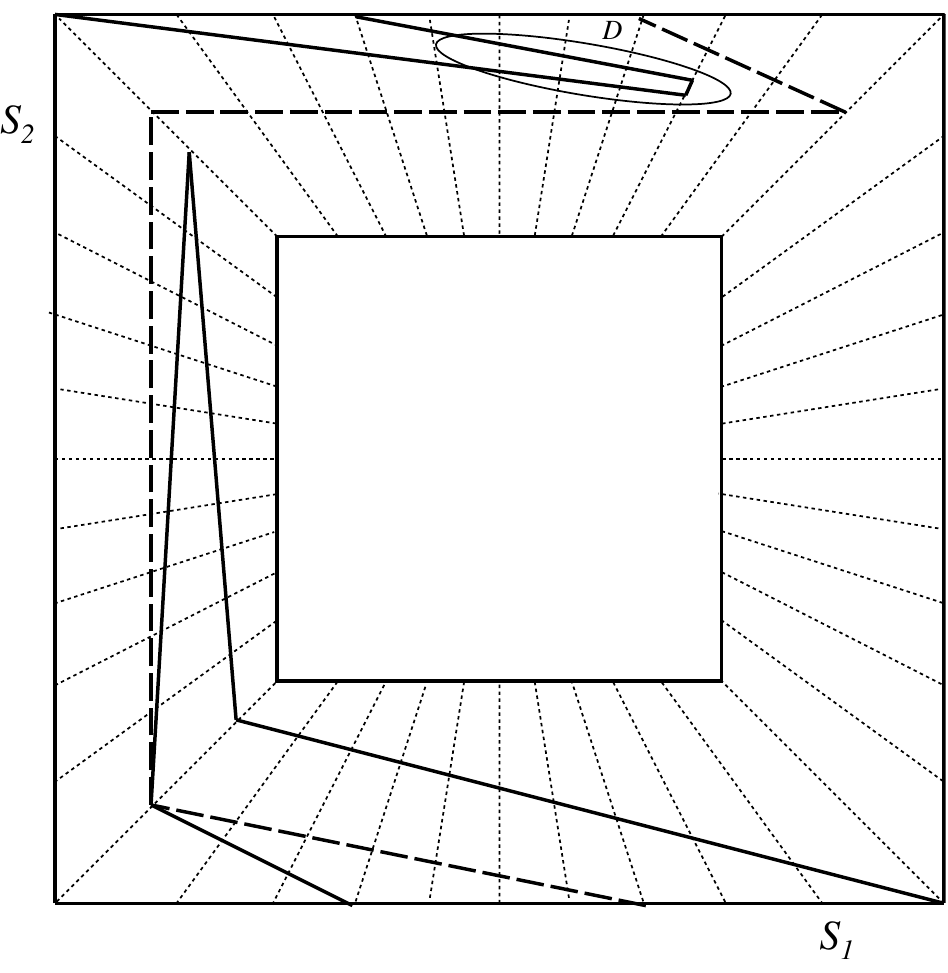}
\caption{Unwrapping the family $\{f_t\}$ in $\T^2 - S$}
\label{fig:unwrapping}
\end{figure}

Let $\{H_t\}$ be the family of near-homeomorphisms associated with the
unwrapping~$\{\barf_t\}$. Each~$H_t$ is homotopic to the identity,
since $H_t\vert_X = f_t$. Let $\veps<1/10$ be small enough that $d(H_t(x),
H_t(y))<1/10$ for all $t$ and all $x,y\in M$ with $d(x,y)<\veps$. 

Applying Theorem~\ref{thm:param-fam} with this value of $\veps$ yields
\begin{itemize}
\item A continuously varying family $\{\varphi_t\}$ of homeomorphisms
  $M\to M$, each the identity on $\partial M$;
\item A continuously varying family $\{\Lambda_t\}$ of compact
  $\varphi_t$-invariant subsets of~$M$ with the property that the
  non-wandering set $\Omega(\varphi_t)$ of $\varphi_t$ is contained in
  $\Lambda_t\cup\partial M$ for each~$t$; and
\item A continuous map $g_t\colon M\to M$ for each~$t$, within
  $C^0$-distance $\veps$ of the identity, satisfying
  \mbox{$g_t(\Lambda_t) = X$} and $H_t\circ g_t = g_t\circ\varphi_t$.  In
  particular, $f_t\circ g_t|_{\Lambda_t} = g_t\circ \varphi_t|_{\Lambda_t}$.
\end{itemize}

By choice of $\veps$ and the relationship $H_t\circ g_t =
g_t\circ\varphi_t$, the homeomorphism $\varphi_t$ is within
$C^0$ distance $1/5$ of the near-homeomorphism $H_t$, and is therefore
isotopic to the identity.

\begin{defns}[The family~$\{\Phi_t\}$, the displacement functions
    $\delta^t$, and the rotation sets~$\rho(t)$]
\mbox{}\\For each~$t\in[0,1]$, let
\begin{itemize}
\item $\Phi_t\colon\T^2\to\T^2$ be the homeomorphism obtained by
  extending $\varphi_t\colon M\to M$ as the identity across the
  excised square~$S$;
\item $\tPhi_t\colon\R^2\to\R^2$ be the lift of $\Phi_t$ which fixes
  the points of $\pi^{-1}(\overline{S})$;
\item $\delta^t\colon\T^2\to\R^2$ be the function defined by
  $\delta^t(x) = \tPhi_t(\tx) - \tx$, where $\tx$ is an arbitrary lift
  of~$x$; and
\item $\rho(t) = \rot_p(\T^2, \Phi_t, \delta^t)$ be the pointwise
  rotation set of $\Phi_t$ with respect to the lift $\tPhi_t$.
\end{itemize}
\end{defns}

\begin{theorem}
\label{thm:torusrot}
$\rho(t) =\rho_8(t)$ for each~$t\in[0,1]$. 

In particular, $\rho(t) = \rot_p(\T^2, \Phi_t, \delta^t) =
\rot_{\sMZ}(\T^2, \Phi_t, \delta^t) = \rot_m(\T^2, \Phi_t, \delta^t)$,
and all of the statements of Theorem~\ref{thm:eightrot} hold when
$\rho_8(t)$ is replaced with $\rho(t)$.
\end{theorem}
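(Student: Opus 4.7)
My plan is to derive the equality $\rho(t)=\rho_8(t)$ from the semi-conjugacy $f_t\circ g_t=g_t\circ\Phi_t$ on $\Lambda_t$ together with Theorem~\ref{thm:param-fam}d), which confines the recurrent dynamics of $\Phi_t$ to $\Lambda_t\cup\overline{S}$; the remaining assertions of the theorem then follow by transferring the statements of Theorem~\ref{thm:eightrot} along this identification and invoking Lemma~\ref{lem:MZ}b).

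First, I would localize the computation to $\Lambda_t$. Since $\Phi_t$ is the identity on $\overline{S}$ and $\tPhi_t$ fixes $\pi^{-1}(\overline{S})$, every point of $\overline{S}$ is fixed with rotation vector $(0,0)$, and $(0,0)\in\rho_8(t)$ because $\overline{1}\in B(K(a(t)))$ realizes the frequency vector $(0,1,0)$, whose image under $\Pi$ is $(0,0)$. Theorem~\ref{thm:param-fam}d) then gives $R(\Phi_t)\subset\Omega(\Phi_t)\subset\Lambda_t\cup\overline{S}$, so Lemma~\ref{lem:MZ}c) reduces the computation of $\rot_{\sMZ}(\T^2,\Phi_t,\delta^t)$ to that of the convex hull of $\rot_p(\Lambda_t,\Phi_t,\delta^t)\cup\{(0,0)\}$.

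Second, I would identify $\delta^t|_{\Lambda_t}$ and $\gamma^t\circ g_t$ as cohomologous with respect to $\Phi_t$. Choose a lift $\tilde g_t\colon\R^2\to\R^2$ of $g_t$ within $C^0$-distance $\veps$ of the identity, and set $b(x)=\tx-\tilde g_t(\tx)$; this is well-defined and bounded by $\veps$ since $\tilde g_t$ is $\Z^2$-equivariant and close to the identity. A direct computation yields
\[
b(\Phi_t(x))-b(x)=\delta^t(x)-\gamma^t(g_t(x))+n(\tx),
\]
where $n(\tx):=\tf_t(\tilde g_t(\tx))-\tilde g_t(\tPhi_t(\tx))$ is continuous, $\Z^2$-valued, and $\Z^2$-equivariant, so it descends to a continuous function $\Lambda_t\to\Z^2$. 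Because $\Lambda_t$ is connected (by Theorem~\ref{thm:param-fam}b), it is topologically conjugate to $\varprojlim(X,f_t)$, which is connected as $X$ is connected and $f_t$ is surjective), this descended function is a constant $n_0\in\Z^2$. To show $n_0=0$, I would evaluate at the fixed point $x_v\in\Lambda_t$ corresponding under the conjugacy of Theorem~\ref{thm:param-fam}b) to the constant sequence $(v,v,\dots)$ at the $f_t$-fixed vertex: choosing the lift $\tilde{x_v}$ close to $(0,0)$, the bound $\veps<1/2$ forces $\tilde g_t(\tilde{x_v})=(0,0)$, so $\tf_t(\tilde g_t(\tilde{x_v}))=(0,0)$; one then verifies that the lift $\tPhi_t$ (which is uniquely specified by fixing $\pi^{-1}(\overline{S})$) also fixes $\tilde{x_v}$, because the isotopy from the identity to $\Phi_t$ which is constant on $\overline{S}$ does not wrap the small fixed point $x_v$ around the torus. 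Granting $n_0=0$, the observables $\delta^t$ and $\gamma^t\circ g_t$ are cohomologous on $\Lambda_t$, so $\rot_p(\Lambda_t,\Phi_t,\delta^t)=\rot_p(\Lambda_t,\Phi_t,\gamma^t\circ g_t)=\rot_p(X,f_t,\gamma^t)=\rho_8(t)$ by surjectivity of $g_t|_{\Lambda_t}$ and formula~(\ref{eq:semiconj}).

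Combining the two reductions, $\rot_{\sMZ}(\T^2,\Phi_t,\delta^t)=\Conv(\rho_8(t)\cup\{(0,0)\})=\rho_8(t)$, since $\rho_8(t)$ is convex and contains $(0,0)$ by Theorem~\ref{thm:eightrot}a). Equality with $\rot_p$ and $\rot_m$ then follows from Lemma~\ref{lem:MZ}b) and continuity of $\delta^t$; the remaining statements of Theorem~\ref{thm:eightrot} transfer verbatim under this identification. The main obstacle is verifying $n_0=0$: this is not a formal consequence of the general unwrapping theorem but a geometric property specific to our construction, requiring a careful inspection of how the distinguished lift $\tPhi_t$ acts on the lifts of $\Lambda_t$-fixed points close to the vertex~$v$, via the explicit isotopy arising from the unwrapping of $f_t$ in $\T^2-S$ depicted in Figure~\ref{fig:unwrapping}.
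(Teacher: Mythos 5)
Your overall architecture matches the paper's: localize to $\Lambda_t$ via $\Omega(\Phi_t)\subset\Lambda_t\cup\overline{S}$ and Lemma~\ref{lem:MZ}, show $\delta^t$ is cohomologous on $\Lambda_t$ to the pullback of $\gamma^t$ under $g_t$, and transfer everything through~(\ref{eq:semiconj}). The divergence is in the middle step, and that is where there is a genuine gap. The paper uses the full relation $H_t\circ g_t=g_t\circ\Phi_t$ (not just the semi-conjugacy with $f_t$ on $\Lambda_t$) together with its lift $\tH_t\circ\tg_t=\tg_t\circ\tPhi_t$, which yields the exact telescoping identity $(\eta^t\circ g_t)_{\Phi_t}(x,r)=\tg_t(\tPhi_t^r(\tx))-\tg_t(\tx)$ with no integer ambiguity to resolve; the key normalization is that $\tH_t$ fixes $\pi^{-1}(v)$ because $v$ lies in the same $H_t$-Nielsen class as $\overline{S}$ by construction. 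You instead work only with $f_t\circ g_t=g_t\circ\Phi_t$ on $\Lambda_t$, correctly obtain the cohomology identity up to a continuous $\Z^2$-valued error $n$, and use connectedness of $\Lambda_t$ to reduce to a constant $n_0$ — all fine so far — but your evaluation of $n_0$ at the fixed point $x_v$ does not go through as written.

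Concretely, two assertions are unproved. First, $\tg_t(\tilde{x_v})=(0,0)$ requires $g_t(x_v)=v$; all you know a priori is that $g_t(x_v)$ is a fixed point of $f_t$ within $\veps$ of $x_v$, and $f_t$ has several other fixed points (in $D$, $E$, $e$). This matters: the lifts of the fixed points in $E$ and $e$ are \emph{not} fixed by $\tf_t$ (they are translated by $(1,0)$), so landing on the wrong fixed point would give $n_0\not=0$. Nothing in the statement of Theorem~\ref{thm:param-fam} locates $x_v$ or identifies $g_t$ with $\pi_0\circ h_t^{-1}$, so this needs an argument. Second, and more fundamentally, the claim that $\tPhi_t$ fixes $\tilde{x_v}$ is exactly the Nielsen-class statement that the normalization hinges on, and ``the isotopy does not wrap the small fixed point around the torus'' is not a proof: the only access you have to the lift $\tPhi_t$ (defined by fixing $\pi^{-1}(\overline{S})$) at points of $\Lambda_t$ is through its relation to $\tH_t$, which is the ingredient you are trying to avoid. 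You flag this as the main obstacle, rightly; but as it stands the constant $n_0$ is not shown to vanish, and the cleanest repair is precisely the paper's device of passing to the displacement $\eta^t$ of $H_t$ and the lifted conjugacy, which settles both issues at once and makes the connectedness argument unnecessary.
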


\begin{proof}
Since $\varphi_t$ is the identity on~$\partial M$, it follows that
$g_t(\partial M) \subset \partial M$. For if $x\in\partial M$ then
$H_t(g_t(x)) = g_t(\varphi_t(x)) = g_t(x)$, so that $g_t(x)$ is a
fixed point of~$H_t$. However $H_t$ has no fixed points outside of
$\partial M \cup X$ by construction; and $g_t(x)\not\in X$ since $d(x,
g_t(x))<\epsilon < 1/10$.

We can therefore extend $g_t$ to a continuous map
$g_t\colon\T^2\to\T^2$ by coning off its action on~$\partial M$. We
also extend $H_t$ as the identity across the excised square~$S$, to a
continuous map $H_t\colon\T^2\to\T^2$. Henceforth we use the symbols
$g_t$ and $H_t$ to refer to these continuous self-maps of the torus,
rather than to the original self-maps of~$M$. Since $g_t(\overline{S})
\subset \overline{S}$ and $\Phi_t|_S = H_t|_S = \id|_S$, we have
\[
H_t\circ g_t = g_t\circ\Phi_t \qquad\text{ for all }t\in [0,1].
\]
Note also that, since $\Omega(\varphi_t)\subset \Lambda_t\cup\partial
M$, we have $\Omega(\Phi_t) \subset \Lambda_t\cup \overline{S}$.

For each~$t$, let $\tH_t\colon\R^2\to\R^2$ be the lift of $H_t$ which
fixes the points of $\pi^{-1}(\overline{S})$. Then $\tH_t$ also fixes
the set $\pi^{-1}(v)$ of integer lattice points, since the vertex~$v$
of~$X$ is in the same $H_t$-Nielsen class as the points of
$\overline{S}$ by construction of~$H_t$. Let $\eta^t\colon\T^2\to\R^2$
be the displacement for $H_t$: that is, 
\[\eta^t(x) = \tH_t(\tx)-\tx,\]
where $\tx$ is an arbitrary lift of~$x$. Finally, let
$\tg_t\colon\R^2\to\R^2$ be the lift of~$g_t$ which is $\veps$-close
to $\id_{\R^2}$, so that
\[
\tH_t \circ \tg_t = \tg_t\circ\tPhi_t \qquad\text{ for all }t\in [0,1].
\]

We will prove in successive steps that
\[
\rot_p(X, f_t, \gamma^t) = \rot_p(\Lambda_t, \Phi_t, \eta^t\circ g_t)
= \rot_p(\Lambda_t, \Phi_t, \delta^t) = \rot_p(\T^2, \Phi_t, \delta^t),
\]
which will establish the equality of $\rho_8(t) =
\rot_p(X,f_t,\gamma^t)$ and $\rho(t) = \rot_p(\T^2, \Phi_t,
\delta^t)$. The equality of the pointwise, Misiurewicz-Ziemian, and
measure rotation sets then follows from Lemma~\ref{lem:MZ}b) and the
convexity of $\rho_8(t)$, completing the proof of the theorem.

\medskip

\noindent\textbf{Step 1: $\rot_p(X, f_t, \gamma^t) = \rot_p(\Lambda_t,
  \Phi_t, \eta^t\circ g_t)$}

Since $g_t(\Lambda_t) = X$ and $f_t\circ g_t|_{\Lambda_t} =
g_t\circ\Phi_t|_{\Lambda_t}$, it follows from~(\ref{eq:semiconj}) that
\[\rot_p(X, f_t, \gamma^t) = \rot_p(\Lambda_t, \Phi_t, \gamma^t\circ
g_t).\] However $\eta^t|_X = \gamma^t$, since $H_t|_X = f_t$ and the
lifts $\tH_t$ and $\tf_t$ both fix points above the vertex~$v$ of~$X$,
so that $\gamma^t\circ g_t|_{\Lambda_t} = \eta^t\circ
g_t|_{\Lambda_t}$.

\medskip

\noindent\textbf{Step 2: $\rot_p(\Lambda_t,
  \Phi_t, \eta^t\circ g_t) = \rot_p(\Lambda_t, \Phi_t, \delta^t)$}

For all $x\in\T^2$, $\tx\in\pi^{-1}(x)$, and $r\in\N$ we have
\begin{eqnarray*}
(\eta^t\circ g_t)_{\Phi_t}(x,r) &=& \sum_{i=0}^{r-1}
  \eta^t(g_t(\Phi_t^i(x)))\\
&=& \sum_{i=0}^{r-1} \left(\tH_t(\tg_t(\tPhi_t^i(\tx))) -
  \tg_t(\tPhi_t^i(\tx))\right)\\
&=& \sum_{i=0}^{r-1} \left(\tH_t^{i+1}(\tg_t(\tx)) -
  \tH_t^i(\tg_t(\tx))\right)\\
&=& \tH_t^r(\tg_t(\tx)) - \tg_t(\tx)\\
&=& \tg_t(\tPhi_t^r(\tx)) - \tg_t(\tx).
\end{eqnarray*}
Since $\delta^t_{\Phi_t}(x,r) = \tPhi_t^r(\tx) - \tx$ and $\tg_t$ is
$\veps$-close to the identity, $\eta^t\circ g_t$ and $\delta^t$ are
cohomologous with respect to $\Phi_t$, and the equality
follows.

\medskip

\noindent\textbf{Step 3: $\rot_p(\Lambda_t, \Phi_t, \delta^t) =
  \rot_p(\T^2, \Phi_t, \delta^t)$}

We have $\rot_p(\Lambda_t, \Phi_t, \delta^t) =
\rot_p(\Lambda_t\cup\overline{S}, \Phi_t, \delta^t)$, since
$\widehat{\delta^t}_{\Phi_t}(x) = (0,0)$ for all $x\in\overline{S}$,
and the rotation vector~$(0,0)$ is realized by any point $y\in\Lambda_t$
satisfying $g_t(y)=v$. Since $\rot_p(\Lambda_t, \Phi_t, \delta^t) =
\rho_8(t)$ is convex, and the recurrent set $R(\Phi_t)$ satisfies
$R(\Phi_t)\subset\Omega(\Phi_t)\subset \Lambda_t\cup\overline{S}$, the
equality follows from Lemma~\ref{lem:MZ}c).
\end{proof}

\begin{remark}
\label{rmk:higher-dimensions}
The results about digit frequency sets of symbolic $\beta$-shifts on
three symbols summarized in Theorem~\ref{thm:digit-freq} have analogs
for symbolic $\beta$-shifts defined over arbitrarily many
symbols~\cite{beta}. These can be used to compute the rotation sets of
families of self-maps of the wedge $X=S_1\vee S_2\vee\cdots\vee S_n$
of arbitrarily many circles defined analogously to the
family~$\{f_t\}$. These families can then be unwrapped to yield
families of homeomorphisms of $n$-dimensional tori whose rotation sets
agree with those of the self-maps of~$X$. The pointwise (or
Misiurewicz-Ziemian, or measure) rotation sets of these
higher-dimensional families then have properties analogous to those of
the family~$\{\Phi_t\}$ given in Theorem~\ref{thm:eightrot}. The only
differences are: the rotation sets are $n$-dimensional, and statements
about polygons should be replaced with statements about polytopes; in
the case $t\in B_3$, there can be between $2$ and $n$ irrational
extreme points; the authors have not proved a statement analogous to
the genericity of smooth limit extreme points; and neither have we
proved the discontinuity of the set of extreme points at parameters
in~$B_3$.

Here we sketch the changes which are required in the case~$n\ge 3$. We
subdivide the circle~$S_1$ into $2n+1$ oriented edges and the
other~$S_i$ into $3$ oriented edges:
\begin{eqnarray*}
S_1 &=&
C_2\,c_2\,C_3\,c_3\,\ldots\,C_{n}\,c_{n}\,D\,E\,e\quad\text{
  and}\\
S_i &=& A_i\,B_i\,b_i \qquad (2\le i\le n).
\end{eqnarray*}
The map $f\colon X\to X$ is defined by
\[
\begin{array}{lllll}
f(A_i) = S_i, \quad & f(B_i) = S_1, \quad & f(b_i) = S_1^{-1},&&
\\ f(C_i) = S_i, \quad & f(c_i) = S_i^{-1}, \quad & f(D) = S_1,
\quad & f(E) = S_1, \quad & f(e) = S_1^{-1},
\end{array}
\]
for $2\le i\le n$.  The family of maps $f_t\colon X\to X$ is then
defined by cutting off the tip of the transition $E\,e\mapsto
S_1\,S_1^{-1}$. Pointwise rotation sets $\rot_p(X, f_t, \gamma^t)$ are
defined by lifting to the abelian cover~$\tX$.

The analogs of Lemma~\ref{lem:conjtobeta} and
Theorem~\ref{thm:rotequal} are proved in exactly the same way. In
particular, when calculating rotation sets, it suffices to restrict to
those points whose orbits lie entirely in the edges $B_i$ ($2\le i\le
n$), $C_i$ ($2\le i\le n$), $D$ and $E$. Associating the symbol $i$ to
the word $C_{i+2}\,B_{i+2}$ for $0\le i\le n-2$, the symbol~$n-1$
to~$D$, and the symbol~$n$ to~$E$, reduces the calculation of these
rotation sets to that of digit frequency sets of symbolic
$\beta$-shifts on $n+1$ symbols.

Constructing a family $\{\Phi_t\}_{t\in[0,1]}$ of self-homeomorphisms
of~$\T^n$ with the same rotation sets proceeds exactly as in
Section~\ref{sec:torus-family}, using for the manifold~$M$ a tubular
neighborhood of~$X$. 

We thank the referee for pointing out that other useful
generalizations can be obtained using an embedded wedge of arbitrarily
many non-homotopic circles in~$\T^2$. On such a wedge, one could
define a family of maps which unwraps; or one could use Denjoy
examples as in~\cite{kwap2}.
\end{remark}

\section{Dynamical representatives of rotation vectors in symbolic
  $\beta$-shifts} 
\label{sec:representatives-beta}
In this section and the next we will study dynamical representatives
of elements of the rotation sets $\DF(\uw)$, $\rho_8(t)$, and
$\rho(t)$: that is, how elements of these sets are represented by invariant
sets and invariant measures of the underlying dynamical systems. The
simplest case, of digit frequency sets of symbolic $\beta$-shifts,
will be treated in this section, and the results applied in
Section~\ref{sec:representatives-toruseight} to the families~$\{f_t\}$
and $\{\Phi_t\}$ of maps of the figure eight space and the torus.

\subsection{Types of dynamical representatives of rotation vectors}
We start with some definitions and preliminary observations in the
general situation of Section~\ref{sec:rot-theory}. 

Let~$Z$ be a compact metric space, $g\colon Z\to Z$ be continuous, and
$\phi\colon Z\to\R^k$ be a continuous observable. Given an element
$\bv$ of the rotation set $\rot_p(Z,g,\phi)$, we first consider
invariant subsets in which every element~$z$ has rotation vector
$\hphi_g(z) = \bv$. We define three types of such subsets, with
increasingly strong properties: $\bv$-sets; $\bv$-minimal sets; and
\vgoober{s}.

\begin{defns}[$\bv$-set; $\bv$-minimal set; bounded deviation;
    \vgoober]\mbox{}
\begin{enumerate}[a)]
\item A {\em $\bv$-set} for $g$ with respect to $\phi$ is a non-empty
  $g$-invariant subset $Y$ of $Z$ with $\hphi_g(y)=\bv$ for all $y\in
  Y$. We say also that~$Y$ {\em represents} the rotation vector~$\bv$.
\item A {\em $\bv$-minimal set} for $g$ with respect to
  $\phi$ is a compact $g$-minimal $\bv$-set.
\item A point $z\in Z$ with $\hphi_g(z) = \bv$ is said to have {\em
  bounded deviation} (or {\em bounded mean motion}) if there is a
  constant~$M$ such that
\begin{equation}
\label{eq:bounded-dev}
|| \phi_g(z,r) - r\bv|| < M \qquad \text{ for all } r\ge 0.
\end{equation}
A $\bv$-set $Y$ for $g$ with respect to $\phi$ has {\em bounded
  deviation} if there is an~$M$ such that~(\ref{eq:bounded-dev}) holds
for all $z\in Y$.
\item A {\em \vgoober} for $g$ with respect
  to $\phi$ is a $\bv$-minimal set with bounded deviation.
\end{enumerate}
\end{defns}

\begin{remarks}\mbox{}
\label{rmk:v-sets}
\begin{enumerate}[a)]
\item Any compact $\bv$-set contains a minimal subset~$Y$, which
is therefore a $\bv$-minimal set.
\item A straightforward consequence of the continuity of~$g$
  and~$\phi$ is that if $z\in Z$ is any point for
  which~(\ref{eq:bounded-dev}) holds, its omega-limit set
  $\omega(z,g)$ is a $\bv$-set with bounded deviation
  (cf.~\cite{MZ2}). Hence, by~a), the existence of such a point~$z$
  implies the existence of a \vgoober.
\item The papers of J\"ager~\cite{J1,J2} explore the implications of
  bounded and unbounded deviation, showing in particular that if $\bv$
  is irrational then the dynamics on any \vgoober\ is semi-conjugate
  to rigid translation on a torus of some dimension. See
  Remark~\ref{rmk:compare}b).
\end{enumerate}
\end{remarks}

\noindent We next consider the representation of rotation vectors by
ergodic invariant measures.

\newpage 

\begin{defns}[Representation by ergodic invariant
    measures; directional and lost]\mbox{}\\ Let $\mu\in\cM_e(g)$.
\begin{enumerate}[a)]
\item The measure $\mu$ {\em represents} $\bv$ if $\bv =
  \rot(\mu,g,\phi)$. (That is, if $\bv = \int\phi\,d\mu$.) 
\item The measure $\mu$ is {\em ($\bv$-)directional} (for~$g$ with
  respect to~$\phi$) if its support $\supp(\mu)$ is a $\bv$-set, where
  $\bv=\rot(\mu,g,\phi)$; it is {\em lost} otherwise
  (\cite{gellmis}, cf.\ \cite{jenk, jenk2}).
\end{enumerate}
\end{defns}

\begin{remarks}\mbox{}
\begin{enumerate}[a)]
\item Using this terminology, Oxtoby's theorem~\cite{oxtoby} states that
  $\supp(\mu)$ is uniquely ergodic if and only if $\mu$ is directional
  for $g$ with respect to every continuous
  observable~$\phi$. For a measure to be directional can therefore be
  interpreted as an analog of unique ergodicity with respect to a
  single preferred observable.
\item Every $\bv$-minimal set~$Y$ is the support of a directional measure
  (namely any $\mu\in\cM_e(g|_Y)$); and, conversely, the support of
  any directional measure contains a $\bv$-minimal set by
  Remark~\ref{rmk:v-sets}a). 
\end{enumerate}

\end{remarks}

\subsection{Infimax minimal sets}
In the remainder of Section~\ref{sec:representatives-beta} we restrict
to the situation of Section~\ref{sec:dig-freq}, where
$Z=\Sigma^+=\{0,1,2\}^\N$ ordered lexicographically, the dynamics is
given by the shift map~$\sigma$, and the observable of interest is
$\kappa\colon\Sigma^+ \to\Delta\subset\R^3$ defined by $\kappa(\uw) =
\be_{w_0}$. We adopt the abbreviated notation
$\hkappa(\uw):=\hkappa_\sigma(\uw)$, $\DF_p(X) := \rot_p(X, \sigma,
\kappa)$, and $\DF_{\sMZ}(X) := \rot_{\sMZ}(X, \sigma, \kappa)$,
where~$X$ is a compact shift-invariant subset of~$\Sigma^+$. Recall
that if $\uw\in\Max$, we write $\DF(\uw)$ for $\DF_p(B(\uw))$, where
$B(\uw)$ is given by~(\ref{eq:betadef}).

The main tool used to analyse the digit frequency sets of symbolic
$\beta$-shifts is the {\em infimax sequences} introduced
in~\cite{lex}. We now summarize necessary results from that paper and
from~\cite{beta}. Define
\[
\Delta' = \{\bal=(\alpha_0,\alpha_1,\alpha_2)\in\Delta\,:\,\alpha_2>0\}.
\]

Given $\bal\in\Delta'$, 
write~$\cI(\bal)\in\Sigma^+$ for the infimum of the set of maximal
sequences $\uw\in\Max$ with $\hkappa(\uw)=\bal$. Note that $\cI(\bal)$
is necessarily maximal, since $\Max$ is closed in $\Sigma^+$, but need
not satisfy $\hkappa(\cI(\bal))=\bal$. The infimax
sequences~$\cI(\bal)$, which can be calculated using a
multidimensional continued fraction algorithm, have the additional
property (Lemma~19 of~\cite{lex}) that
\begin{equation}
\label{eq:infi-sup}
\sup(o(\uw, \sigma)) \ge \cI(\bal) \quad\text{ for all
  $\uw\in\Sigma^+$ with $\hkappa(\uw) = \bal$.} 
\end{equation}
  A consequence of this (Lemma~16 of~\cite{beta}) is that
\begin{equation}
\label{eq:df-char}
\bal\in\DF(\uw) \quad\iff\quad
\cI(\bal) \le \uw.
\end{equation}
Write
\[
\cJ = \Cl\left(\{\cI(\bal)\,:\,\bal\in\Delta'\}\right) \subset\Max,
\]
the closure in~$\Sigma^+$ of the set of infimax sequences. It is shown
in Lemmas~12 and~13 of~\cite{beta} that $\cJ$ is a Cantor set, and
that the only elements of~$\cJ$ which are not limits of both
strictly increasing and strictly decreasing sequences in~$\cJ$ are
\begin{itemize}
\item the elements added in the closure, which are not
  limits of any strictly increasing sequence in~$\cJ$, and
\item the elements $\cI(\bal)$ with $\bal\in\Q^3$, which are not
 limits of any strictly decreasing sequence in~$\cJ$.
\end{itemize}
 In particular, the supremum of any non-empty set of infimax sequences
 is itself an infimax sequence.
 
It follows from~(\ref{eq:df-char}) (see Lemma~19 of~\cite{beta}) that
for all $\uw\in\Max$,
\begin{equation}
\label{eq:infi}
\DF(\uw) = \DF(\cI(\bal)), \qquad\text{ where }\cI(\bal) =
\max\{\cI(\bal')\,:\,\bal'\in\Delta'\text{ with }\cI(\bal')\le \uw\}.
\end{equation}
It is then immediate from Theorems~\ref{thm:rotequal} and~\ref{thm:torusrot}
that, for all $t\in[0,1]$,  the rotation sets $\rho_8(t)=\rho(t)$ are of the
form $\Pi(\DF(\cI(\bal)))$ for some $\bal=\bal(t)\in\Delta'$.

\medskip

The three types of rotation sets which are described in
Theorems~\ref{thm:digit-freq} and~\ref{thm:eightrot} correspond to a
fundamental trichotomy for elements of~$\Delta'$:
\begin{description}
\item[Rational regular] When $\bal\in\Q^3$, the infimax sequence $\cI(\bal)$
  is periodic, and the digit frequency set $\DF(\cI(\bal))$ is a
  polygon with rational vertices, one of which is $\bal$.
\item[Irrational regular] {\em Regular} $\bal\not\in\Q^3$ are
  characterized by the property that $\cI(\bal)\not=\cI(\bal')$ for
  all $\bal'\not=\bal$. In this case $\cI(\bal)$ is aperiodic,
  $\hkappa(\cI(\bal)) = \bal$, and $\DF(\cI(\bal))$ has infinitely many
  rational polyhedral vertices limiting on the single irrational extreme
  point~$\bal$.
\item[Irrational exceptional] {\em Exceptional} $\bal\not\in\Q^3$ are
  characterized by the existence of a non-trivial {\em exceptional
    interval} $P_\bal\subset\Delta'$ with the property that
  $\cI(\bal)=\cI(\bal')$ if and only if $\bal'\in P_\bal$. In this
  case $\cI(\bal)$ is aperiodic, $\hkappa(\cI(\bal))$ does not exist,
  and $\DF(\cI(\bal))$ has infinitely many rational polyhedral vertices
  limiting on the two irrational endpoints of $P_\bal$.
\end{description}

The partition $\Max=M_1\sqcup M_2\sqcup M_3$ of
Theorem~\ref{thm:digit-freq} is connected to this trichotomy as
follows: $\uw\in M_2$ (respectively $\uw\in M_3$) if and only if $\uw = \cI(\bal)$ for some
irrational regular (respectively exceptional)~$\bal$; and $\uw\in M_1$
otherwise. The left hand endpoints of the intervals $I_{\uw}$ in $M_1$
are exactly the sequences $\cI(\bal)$ for $\bal\in\Q^3$; and the right
hand endpoints are exactly the non-infimax elements of~$\cJ$.

The following result, which is proved in Remark~23b) of~\cite{lex},
will play a central r\^ole.

\begin{lemma}
For every $\bal\in\Delta'$, the infimax sequence $\cI(\bal)$ is almost
periodic, and hence its orbit closure is a minimal set.
\end{lemma}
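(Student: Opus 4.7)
The aim is to show that $\cI(\bal)$ is uniformly recurrent, equivalently that its orbit closure $Y := \Cl(o(\cI(\bal), \sigma))$ in $(\Sigma^+, \sigma)$ is minimal.

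I would argue by contradiction: suppose $Y$ contains a proper closed $\sigma$-invariant subset, and choose (by compactness) a minimal set $Z \subsetneq Y$. Let $\mu$ be any ergodic $\sigma$-invariant Borel probability measure on $Z$, and set $\bbeta := \int \kappa \, d\mu$. The pointwise ergodic theorem produces a $\mu$-full measure set of $\uw \in Z$ with $\hkappa(\uw) = \bbeta$; by~\eqref{eq:infi-sup} each such $\uw$ satisfies $\sup(o(\uw, \sigma)) \ge \cI(\bbeta)$; and by minimality of $Z$ the orbit of any such $\uw$ is dense in $Z$, so $\cI(\bbeta) \in Z$. Moreover $Z \subset B(\cI(\bal))$ makes $\bbeta$ a measure rotation vector of the $\beta$-shift $B(\cI(\bal))$, hence $\bbeta \in \DF(\cI(\bal))$, and~\eqref{eq:df-char} yields $\cI(\bbeta) \le \cI(\bal)$.

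The core task is to upgrade this to $\cI(\bbeta) = \cI(\bal)$, which places $\cI(\bal) \in Z$ and forces $Y \subset Z$, the desired contradiction. In the rational regular case ($\bal \in \Q^3$), $\cI(\bal)$ is periodic, $Y$ is already a finite orbit, and the claim is immediate. In the irrational cases, one must verify that every ergodic measure carried by $Y$ has rotation vector equal to $\bal$ (irrational regular) or lying in the exceptional interval $P_\bal$ (irrational exceptional): these are precisely the $\bbeta$ for which $\cI(\bbeta) = \cI(\bal)$ according to the trichotomy described earlier. One must also rule out the degenerate situation where $\bbeta$ has third component zero, in which case $\mu$ would be supported on $\{0,1\}^{\N}$ and the framework of $\cI$ would not directly apply. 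The natural way to handle both difficulties is to exploit the construction of $\cI(\bal)$ in~\cite{lex} as an $S$-adic limit of a primitive sequence of substitutions on $\{0,1,2\}^\ast$, generated by a multidimensional continued fraction algorithm applied to $\bal$. Primitivity translates directly into combinatorial almost periodicity of $\cI(\bal)$---every finite prefix recurs in $\cI(\bal)$ within a bounded gap---which bypasses the ergodic detour entirely and yields minimality of $Y$. The substantive work thus lies in establishing primitivity of the $S$-adic system underlying the infimax construction; this is precisely what is done in~\cite{lex}.
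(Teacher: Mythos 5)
The paper does not actually prove this lemma: it states that the result ``is proved in Remark~23b) of~\cite{lex}'' and moves on. Your proposal ultimately lands in the same place --- the only non-circular route you identify is primitivity of the $S$-adic/substitutive construction of $\cI(\bal)$, which is precisely the content of the cited remark in~\cite{lex} --- so your final answer coincides with the paper's. You are also right to abandon the ergodic detour, but it is worth being precise about \emph{why} it cannot be repaired cheaply: the step you flag as ``the core task'' (upgrading $\cI(\bbeta)\le\cI(\bal)$ to equality) requires knowing that every ergodic measure carried by $Y=\Cl(o(\cI(\bal),\sigma))$ has rotation vector in $\{\bal\}$ (regular case) or $P_\bal$ (exceptional case), and excluding measures with $\beta_2=0$ requires knowing that $\cI(\bal)$ has no arbitrarily long $2$-free blocks. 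Both of these are recurrence properties of $\cI(\bal)$, i.e.\ restatements of the lemma itself; and the unique-ergodicity results of Bruin--Troubetzkoy (Lemma~\ref{lem:BT}) that would identify the admissible $\bbeta$ are stated for the \emph{minimal} set $C_\bal$ and hence presuppose the conclusion. (A small slip in the same passage: from~(\ref{eq:infi-sup}) you get $\max Z\ge\cI(\bbeta)$, not $\cI(\bbeta)\in Z$; the membership $\cI(\bal)\in Z$ only follows after equality $\cI(\bbeta)=\cI(\bal)$ is combined with $\max Z\le\cI(\bal)$, exactly as in Lemma~\ref{lem:supofcpt}.) So the honest summary is: the entire substance of the lemma is the combinatorial almost periodicity coming from the continued-fraction construction in~\cite{lex}, and neither you nor the paper proves it here.
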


This motivates the following definition.

\begin{defn}[Infimax minimal sets]
For each $\bal\in\Delta'$, we define the {\em $\bal$-infimax minimal set}
\[
C_{\bal} = \Cl(\,o(\cI(\bal), \sigma)\,) \subset \Sigma^+.
\]
\end{defn}

By the definition of~$B(\uw)$, the maximality of $\cI(\bal)$,
and~(\ref{eq:df-char}), we have
\begin{equation}
\label{eq:C-alpha-detect}
C_{\bal} \subset B(\uw) \,\iff\, \cI(\bal) \le \uw \,\iff\, \bal\in\DF(\uw).
\end{equation}
Therefore the infimax minimal set~$C_\bal$ detects whether or not the
vector $\bal$ belongs to a given digit frequency set, and a natural
question is how well these sets represent~$\bal$. It is already clear
that $C_\bal$ is not an \algoober\ when $\bal$ is exceptional (since
it is not even an $\bal$-set), and
Example~\ref{ex:not-goober} below shows that this can be the case also
when $\bal$ is irrational regular.

The sets $C_\bal$ were studied in the context of attractors of
interval translation maps by Bruin and Troubetzkoy~\cite{BT}, and
their results about unique ergodicity are included in the following
lemma.

\begin{lemma}\mbox{}
\label{lem:BT}
\begin{enumerate}[a)]
\item If $\bal$ is regular, then $C_\bal$ is uniquely ergodic. In
  particular, $C_\bal$ is an $\bal$-minimal set, and the unique
  ergodic measure is $\bal$-directional.
\item If $\bal$ is exceptional, then $C_\bal$ has exactly two ergodic
  invariant measures $\mu_1$ and $\mu_2$, and $\rot(\mu_1, \sigma,
  \kappa)$ and $\rot(\mu_2, \sigma, \kappa)$ are the endpoints of the
  exceptional interval~$P_\bal$. In particular, $C_\bal$ is not an
  $\bal$-set, and $\mu_1$ and $\mu_2$ are both lost.
\item If $\bal$ is exceptional, then $\DF_{\sMZ}(C_\bal) = P_\bal$.
\end{enumerate}

\end{lemma}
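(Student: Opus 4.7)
The plan is to handle the three parts of the lemma in sequence, invoking the Bruin--Troubetzkoy results from~\cite{BT} for the hard measure-theoretic content and adding the identification of rotation vectors, then deducing (c) by a short convexity argument.

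For part~(a), I would cite~\cite{BT} for the unique ergodicity of $C_\bal$ when $\bal$ is regular, and then identify the rotation vector of the unique ergodic measure~$\mu$. By unique ergodicity, $\hkappa(\uw) = \int\kappa\,d\mu$ for \emph{every} $\uw\in C_\bal$; in the regular case we know from the trichotomy preceding the lemma that $\hkappa(\cI(\bal)) = \bal$, so taking $\uw=\cI(\bal)$ gives $\int\kappa\,d\mu = \bal$. Thus every point of $C_\bal$ has rotation vector~$\bal$ (so $C_\bal$ is an $\bal$-minimal set) and $\mu$ is $\bal$-directional.

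For part~(b), I would again cite~\cite{BT} for the existence of exactly two ergodic measures $\mu_1,\mu_2$ on~$C_\bal$, and then show that the corresponding rotation vectors $\bbeta_i = \rot(\mu_i,\sigma,\kappa)$ are the two endpoints of~$P_\bal$. A first observation, using minimality of $C_\bal$ together with maximality of $\cI(\bal)$, is that $\sup o(\uw,\sigma) = \cI(\bal)$ for every $\uw\in C_\bal$: the inequality~$\leq$ holds because $C_\bal\subset B(\cI(\bal))$, and the inequality~$\geq$ follows from $\cI(\bal)\in\Cl(o(\uw,\sigma))$. For $\mu_i$-a.e.\ $\uw$ we then have $\hkappa(\uw)=\bbeta_i$, so~\eqref{eq:infi-sup} gives $\cI(\bbeta_i)\le\cI(\bal)$, and hence $\bbeta_i\in\DF(\cI(\bal))$ by~\eqref{eq:df-char}. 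To upgrade this to the statement that the two $\bbeta_i$ are precisely the endpoints of~$P_\bal$, I would combine the structural description of~$\DF(\cI(\bal))$ in Theorem~\ref{thm:digit-freq}c)iii) (which asserts that $P_\bal$ is the unique non-degenerate line segment in $\Bd(\DF(\cI(\bal)))$ joining two irrational extreme points) with the fact, from Lemma~\ref{lem:MZ}a), that extreme points of $\DF_{\sMZ}(C_\bal)$ are rotation vectors of ergodic measures; since $\bal$ is not realized as $\hkappa(\uw)$ for any $\uw\in C_\bal$ in the exceptional case (as $\hkappa(\cI(\bal))$ does not exist), the $\bbeta_i$ must be distinct endpoints of $P_\bal$ rather than interior points of this segment or one of the rational polyhedral vertices.

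Part~(c) is then a short deduction from~(b): $\DF_{\sMZ}(C_\bal) = \rot_{\sMZ}(C_\bal,\sigma,\kappa)$ is a compact convex subset of~$\Delta$ by the Misiurewicz--Ziemian theorem, and by Lemma~\ref{lem:MZ}a) each of its extreme points lies in $\rot_{em}(C_\bal,\sigma,\kappa) = \{\bbeta_1,\bbeta_2\}$. Hence $\DF_{\sMZ}(C_\bal)$ is contained in the line segment $[\bbeta_1,\bbeta_2]=P_\bal$; the reverse inclusion is immediate because $\bbeta_1,\bbeta_2\in\rot_m(C_\bal,\sigma,\kappa)\subset\rot_{\sMZ}(C_\bal,\sigma,\kappa)$ and the latter is convex.

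The main obstacle will be the rotation-vector identification in part~(b): showing that the two abstractly-given ergodic measures from~\cite{BT} have rotation vectors equal to the \emph{endpoints} of $P_\bal$, rather than landing somewhere in the interior or at a rational vertex of $\DF(\cI(\bal))$. I expect this to require either a direct construction of $\mu_1,\mu_2$ as weak-$*$ limits of invariant measures supported on infimax minimal sets $C_{\bal^{(n)}}$ for regular $\bal^{(n)}\to\bbeta_i$ approaching each endpoint of $P_\bal$ from outside, or a dynamical characterization of the two ergodic measures via the multidimensional continued fraction algorithm underlying the infimax construction of~\cite{lex}.
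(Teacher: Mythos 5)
Your treatment of part~(c) contains a genuine gap. You assert that $\DF_{\sMZ}(C_\bal)=\rot_{\sMZ}(C_\bal,\sigma,\kappa)$ is convex ``by the Misiurewicz--Ziemian theorem'', but that theorem concerns the rotation set of a torus homeomorphism isotopic to the identity; the Misiurewicz--Ziemian rotation set of a subshift with a continuous observable is compact but need not be convex (consider a subshift consisting of the two fixed points $\overline{0}$ and $\overline{1}$, whose $\rot_{\sMZ}$ is the two-point set $\{\be_0,\be_1\}$). Nor does $\rot_m\subset\rot_{\sMZ}$ hold in general --- the paper's stated inclusions are $\rot_{em}\subset\rot_p\subset\rot_{\sMZ}$ and $\rot_m=\Conv(\rot_{em})$. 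Your extreme-point argument does give $\DF_{\sMZ}(C_\bal)\subset P_\bal$ and that the two endpoints lie in $\DF_{\sMZ}(C_\bal)$, but the reverse inclusion $P_\bal\subset\DF_{\sMZ}(C_\bal)$ is exactly the non-trivial content of part~(c): a priori the orbit of $\cI(\bal)$ could equidistribute only towards a proper subsegment (or a single point) of $P_\bal$. The paper obtains this inclusion from Theorem~57 of~\cite{beta}, which shows that the partial digit frequencies $\kappa_\sigma(\cI(\bal),r)/r$ have subsequences converging to \emph{every} point of $P_\bal$; some such input about the structure of the infimax sequence is unavoidable here.

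On parts~(a) and~(b): the paper simply cites Corollary~14 and Lemma~17 of~\cite{BT} for the entirety of both statements, including the identification of $\rot(\mu_i,\sigma,\kappa)$ with the endpoints of $P_\bal$, so the supplementary arguments you supply are addressing content the paper treats as already established. Your identification in part~(a) is fine. In part~(b), however, the argument you sketch is not yet a proof: the observation $\sup o(\uw,\sigma)=\cI(\bal)$ together with~(\ref{eq:infi-sup}) only yields $\cI(\bbeta_i)\le\cI(\bal)$, hence $\bbeta_i\in\DF(\cI(\bal))$, and nothing you say rules out $\bbeta_i$ being a rational polyhedral vertex or an interior point of $P_\bal$ (the non-existence of $\hkappa(\cI(\bal))$ constrains the empirical measures of that single orbit, not the locations of the two ergodic rotation vectors). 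You acknowledge this yourself in your closing paragraph; if you do not want to take the endpoint identification from~\cite{BT} as a black box, you would indeed need the finer structure of $\mu_1,\mu_2$ coming from the continued-fraction construction of $\cI(\bal)$. Finally, note that you do not address the ``in particular'' clauses of~(b) ($C_\bal$ is not an $\bal$-set, and $\mu_1,\mu_2$ are lost), though these follow quickly from $\bbeta_1\ne\bbeta_2$ together with minimality of $C_\bal$ and the pointwise ergodic theorem.
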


\begin{proof}
Parts~a) and~b) are proved by Bruin and Troubetzkoy (Corollary~14 and
Lemma~17 of~\cite{BT}).

For part~c), we have $\Conv(\DF_{\sMZ}(C_\bal)) = P_\bal$ by part~b) and
Lemma~\ref{lem:MZ}a), so that $DF_{\sMZ}(C_\bal)\subset P_\bal$. For the
reverse inclusion, Theorem~57 of~\cite{beta}
states that there are subsequences of $(\kappa_\sigma(\cI(\bal),
r)/r)_{r\ge 1}$ converging to every point of $P_\bal$, so that $P_\bal
\subset \DF_{\sMZ}(C_\bal)$.
\end{proof}

\begin{example}
\label{ex:not-goober}
This example (cf.\ Lemma~52 of~\cite{beta}) shows that there exist
totally irrational regular values of~$\bal$ for which $C_\bal$ does
not have bounded deviation, and hence is not an \algoober.

Fix an integer $n\ge 1$, and let~$\Lambda_n\colon\Sigma^+\to\Sigma^+$
be the substitution defined by $0\mapsto 1$, $1\mapsto 20^{n+1}$, and
$2\mapsto 20^n$, with abelianization
\[
A_n = \left(
\begin{array}{ccc}
  0 & n+1 & n \\
  1 & 0 & 0\\
  0 & 1 & 1
\end{array}
\right),
\]
which is a Perron-Frobenius matrix since $A_n^3$ is strictly
positive. Let~$\bal$ be the positive Perron-Frobenius eigenvector
of~$A_n$ which satisfies $||\bal||_1 = 1$. Then we have
\begin{enumerate}[a)]
\item $\cI(\bal)$ is the fixed point $\lim_{r\to\infty}\Lambda_n^r(2)$
  of the substitution~$\Lambda_n$ (Theorem~22 of~\cite{lex});
\item $\bal$ is irrational regular (Theorem~27 of~\cite{lex});
\item $\bal$ is totally irrational (Lemma~52 of~\cite{beta}); and
\item $A_n$ has real eigenvalues $\lambda_1,\lambda_2,\lambda_3$ with
  $|\lambda_3|<1<|\lambda_2|<\lambda_1$ (proof of Lemma~52
  of~\cite{beta}).
\end{enumerate}
Write $r_i = ||A_n^i\be_2||_1$, so that $\kappa_\sigma(\cI(\bal),
r_i) = A_n^i \be_2$. Let $\bv_2$ be an eigenvector of~$A_n$
corresponding to the eigenvalue~$\lambda_2$. Then the magnitude of the
component of $\kappa_\sigma(\cI(\bal), r_i)$ in the direction
of~$\bv_2$ grows like $|\lambda_2|^i$, and in particular
\[
||\kappa_\sigma(\cI(\bal), r_i) - r_i\bal||\to\infty \quad\text{ as }i\to\infty,
\]
so that $\cI(\bal)$ does not have bounded deviation as required.  In
fact, we can be more explicit. By the Perron-Frobenius theorem, $r_i$
grows like $\lambda_1^i$. Writing $\nu =
\log(|\lambda_2|)/\log(\lambda_1)>0$ so that $|\lambda_2| =
\lambda_1^\nu$, we have
\[
||\kappa_\sigma(\cI(\bal), r_i) - r_i\bal|| > C_1|\lambda_2|^i > C_2 r_i^\nu
\]
for some positive constants $C_1$ and $C_2$.

That $C_\bal$ does not have bounded deviation also follows
from the much more general Theorem~1 of~\cite{adam}.
\end{example}

\subsection{Representation of elements of $\DF(\cI(\bal))$.}
Recall that every digit frequency set $\DF(\uw)$ is equal to
$\DF(\cI(\bal))$ for some~$\bal\in\Delta'$; and hence every rotation
set $\rho_8(t)=\rho(t)$ is equal to $\Pi(\DF(\cI(\bal)))$ for
some~$\bal\in\Delta'$. 

In this section we consider representations of elements of
$\DF(\cI(\bal))$ in $B(\cI(\bal))$. We prove two results:
Theorem~\ref{thm:alpha-rep}, concerning representations of~$\bal$ and
points on the exceptional interval $P_\bal$; and
Theorem~\ref{thm:v-rep}, concerning representations of other elements
of $\DF(\cI(\bal))$.

\medskip

We will need a preliminary lemma about infimaxes.
\begin{lemma}
\label{lem:supofcpt}
Let $\bal\in\Delta'$, and let~$X$ be a compact shift-invariant subset
of $\Sigma^+$ with $\bal\in\DF_{\sMZ}(X)$. Then $\max X\ge \cI(\bal)$.
\end{lemma}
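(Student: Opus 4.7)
The plan is to set $\uw^* = \max X$, verify that $\uw^*$ is a maximal sequence (so it determines a symbolic $\beta$-shift $B(\uw^*)$), observe that $X \subset B(\uw^*)$, and then transfer $\bal \in \DF_{\sMZ}(X)$ into $\bal \in \DF(\uw^*)$, from which the equivalence~(\ref{eq:df-char}) yields $\cI(\bal) \le \uw^*$.

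First I would justify that $\uw^* = \max X$ exists. This is a standard fact about compact subsets of $\Sigma^+$ in the product topology with the lexicographic order: inductively define $w^*_n$ as the largest possible $n$-th coordinate over those elements of~$X$ whose initial $n$ symbols agree with $(w^*_0,\ldots,w^*_{n-1})$. The corresponding nested compact sets have $\uw^*$ in their intersection, so $\uw^* \in X$ and is clearly the maximum.

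Next I would check $\uw^* \in \Max$. Shift-invariance of~$X$ gives $\sigma^r(\uw^*) \in X$, hence $\sigma^r(\uw^*) \le \uw^*$ for every $r \ge 0$. To see that $w^*_0 = 2$, I would exploit the hypothesis $\bal \in \Delta'$, which means $\alpha_2 > 0$. From $\bal \in \DF_{\sMZ}(X)$ there are sequences $(\us_i)$ in~$X$ and $r_i \to \infty$ with $\kappa_\sigma(\us_i, r_i)/r_i \to \bal$; the third coordinate of this quantity is the frequency of the symbol~$2$ in the first $r_i$ positions of $\us_i$, which tends to~$\alpha_2 > 0$. Hence for large~$i$ some $\sigma^j(\us_i) \in X$ begins with~$2$, so $w^*_0 = 2$.

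Finally, since every $\us \in X$ satisfies $\sigma^r(\us)\in X\le\uw^*$, we have $X \subset B(\uw^*)$ by the definition~(\ref{eq:betadef}). Therefore $\DF_{\sMZ}(X) \subset \DF_{\sMZ}(B(\uw^*))$, and the remark following Theorem~\ref{thm:digit-freq} (via Lemma~\ref{lem:MZ}b) applied to the continuous observable $\kappa$ and the convex set $\DF(\uw^*)$) identifies $\DF_{\sMZ}(B(\uw^*))$ with $\DF(\uw^*)$. Thus $\bal \in \DF(\uw^*)$, and the characterization~(\ref{eq:df-char}) gives $\cI(\bal) \le \uw^* = \max X$. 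The only step requiring any real thought is establishing $w^*_0 = 2$, which is exactly where the hypothesis $\bal \in \Delta'$ is used; the remainder is bookkeeping with compactness, shift-invariance, and previously recorded facts.
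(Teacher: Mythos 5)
Your proof is correct, but it follows a genuinely different route from the paper's. You argue directly: you verify that $\uw^*=\max X$ exists and lies in $\Max$ (shift-invariance gives $\sigma^r(\uw^*)\le\uw^*$, and $\alpha_2>0$ forces a symbol $2$ to occur in some element of $X$, whence $w^*_0=2$ --- this is indeed the one point where $\bal\in\Delta'$ earns its keep beyond making $\cI(\bal)$ well defined), deduce $X\subset B(\uw^*)$, use monotonicity of $\rot_{\sMZ}$ and the identification $\DF_{\sMZ}(B(\uw^*))=\DF(\uw^*)$ to get $\bal\in\DF(\uw^*)$, and then invoke the characterization~(\ref{eq:df-char}) with $\uw=\uw^*$. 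The paper instead argues by contradiction from $\max X<\cI(\bal)$: it embeds $X$ in $B(\cI(\bal))$, uses Carath\'eodory's theorem together with Lemma~\ref{lem:MZ}a) to produce points $\uw_i\in X$ whose Birkhoff averages are extreme points $\bbeta_i$ of $\Conv(\DF_{\sMZ}(X))$ containing $\bal$ in their convex hull, uses the regular/exceptional trichotomy to find some $\bbeta_i$ with $\cI(\bbeta_i)=\cI(\bal)$, and then applies the more primitive inequality~(\ref{eq:infi-sup}) to that $\uw_i$. In effect you take the full strength of~(\ref{eq:df-char}) (Lemma~16 of~\cite{beta}) as a black box, while the paper re-derives what it needs from~(\ref{eq:infi-sup}) by a measure-theoretic argument; your version is shorter and avoids Carath\'eodory and ergodic measures at the modest cost of checking that $\max X$ is a maximal sequence, which the paper sidesteps by working with $B(\cI(\bal))$ whose defining sequence is maximal by construction.
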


\begin{proof}
Suppose for a contradiction that $\max X < \cI(\bal)$. Then $X \subset
B(\cI(\bal))$ by definition of the symbolic $\beta$-shift, and hence
\[
\bal \in \DF_{\sMZ}(X) \subset \Conv(\DF_{\sMZ}(X)) \subset
\Conv(\DF_{\sMZ}(B(\cI(\bal)))) = \DF(\cI(\bal))
\]
by definition of the convex set $\DF(\cI(\bal)) := \rot_p(B(\cI(\bal)), \sigma,
\kappa) = \rot_{\sMZ}(B(\cI(\bal)), \sigma, \kappa)$.

By Carath\'eodory's theorem, we can find three extreme points
$\bbeta_i$ ($1\le i\le 3$) of $\Conv(\DF_{\sMZ}(X))$ which
contain~$\bal$ in their convex hull. 
By Lemma~\ref{lem:MZ}a), there are elements
$\uw_i$ of~$X$ with $\hkappa(\uw_i) = \bbeta_i$ for each~$i$.

Since $\bbeta_i\in\Conv(\DF_{\sMZ}(X))\subset \DF(\cI(\bal))$, and
$\bal$ is either an extreme point of $\DF(\cI(\bal))$ (in the regular case), or
lies in the exceptional interval $P_\bal$ on the boundary of
$\DF(\cI(\bal))$, at least one of the points $\bbeta_i$ is either
equal to $\bal$, or lies in $P_\bal$. In either case, we have
$\cI(\bbeta_i) = \cI(\bal)$. Therefore, since $\uw_i\in X$ has
$\hkappa(\uw_i)=\bbeta_i$, (\ref{eq:infi-sup}) gives
\[
\max X \ge \sup(o(\uw_i, \sigma)) \ge \cI(\bbeta_i) = \cI(\bal),
\]
which is the required contradiction.
 \end{proof}

\begin{remark} 
Since $C_\bal$ is a minimal subset of $\Sigma^+$ with $\bal\in\DF_{\sMZ}(C_{\bal})$,
Lemma~\ref{lem:supofcpt} gives the characterisation
\[
\cI(\bal) = \min\{\max(C)\,:\, C\subset\Sigma^+ \text{ is a minimal
  set with } \bal\in\DF_{\sMZ}(C)\}.
\]
\end{remark}

\begin{theorem}
\label{thm:alpha-rep}
Let $\bal\in\Delta'$. Then any minimal subset~$C$ of $B(\cI(\bal))$
which satisfies
\[
\DF_{\sMZ}(C) \cap \DF_{\sMZ}(C_{\bal}) \not=\emptyset
\]
is equal to $C_{\bal}$. In particular,
\begin{enumerate}[a)]
\item if $\bal$ is regular, then $C_\bal$ is the unique $\bal$-minimal
  set in $B(\cI(\bal))$; and
\item If $\bal$ is exceptional, then there are no $\bal'$-minimal sets
  in $B(\cI(\bal))$ for any $\bal'\in P_\bal$.
\end{enumerate}
\end{theorem}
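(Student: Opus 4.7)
The plan is to reduce the theorem to a single application of Lemma~\ref{lem:supofcpt}, combined with the information Lemma~\ref{lem:BT} gives about $\DF_{\sMZ}(C_\bal)$. The central observation is that although $C$ and $C_\bal$ are chosen to share a rotation vector, we do not need that shared vector to be~$\bal$ itself; any shared vector has the same infimax sequence as~$\bal$, and that is enough.

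Concretely, I would pick any $\bbeta\in\DF_{\sMZ}(C)\cap\DF_{\sMZ}(C_\bal)$. By Lemma~\ref{lem:BT}, $\DF_{\sMZ}(C_\bal)$ is either $\{\bal\}$ (regular case) or the exceptional interval $P_\bal$ (exceptional case), so in both cases the infimax of $\bbeta$ coincides with that of $\bal$: $\cI(\bbeta)=\cI(\bal)$. Then I would apply Lemma~\ref{lem:supofcpt} to the compact shift-invariant set~$C$ with the vector~$\bbeta$, obtaining $\max C \ge \cI(\bbeta)=\cI(\bal)$. On the other hand, $C\subset B(\cI(\bal))$ forces $\max C\le \cI(\bal)$, so $\max C = \cI(\bal)\in C$. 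Since~$C$ is minimal and $C_\bal = \Cl(o(\cI(\bal),\sigma))$ is the $\sigma$-orbit closure of $\cI(\bal)$, the inclusion $\cI(\bal)\in C$ gives $C_\bal\subset C$, and minimality of~$C$ forces $C=C_\bal$.

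Parts~a) and~b) then drop out of the main statement together with Lemma~\ref{lem:BT}. For~a), any $\bal$-minimal set $C\subset B(\cI(\bal))$ has $\bal\in\DF_{\sMZ}(C)$, and $\bal\in\DF_{\sMZ}(C_\bal)$ by Lemma~\ref{lem:BT}a), so $C=C_\bal$. For~b), if $\bal'\in P_\bal$ and $C\subset B(\cI(\bal))$ were a $\bal'$-minimal set, then $\bal'\in\DF_{\sMZ}(C)\cap P_\bal = \DF_{\sMZ}(C)\cap\DF_{\sMZ}(C_\bal)$ by Lemma~\ref{lem:BT}c), so the main statement would force $C=C_\bal$; but $C_\bal$ is not a $\bal'$-set, since by Lemma~\ref{lem:BT}b) it carries two ergodic measures realizing the two distinct endpoints of~$P_\bal$, and the pointwise ergodic theorem then produces points of $C_\bal$ with rotation vectors distinct from~$\bal'$, a contradiction.

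There is no real obstacle here: the argument is short once one sees that the intersection hypothesis is being used only to extract a vector~$\bbeta$ with $\cI(\bbeta)=\cI(\bal)$ so that Lemma~\ref{lem:supofcpt} can pin down $\max C$. The only point requiring a little care is the final step of~b), where one must verify that a minimal set carrying two ergodic measures with distinct $\kappa$-averages cannot consist entirely of points with a single Birkhoff average, but this is immediate from the pointwise ergodic theorem.
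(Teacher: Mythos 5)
Your proposal is correct and follows essentially the same route as the paper's proof: use Lemma~\ref{lem:BT} to see that every element of $\DF_{\sMZ}(C_\bal)$ has the same infimax as $\bal$, apply Lemma~\ref{lem:supofcpt} to get $\max C \ge \cI(\bal)$, combine with $C\subset B(\cI(\bal))$ to conclude $\cI(\bal)\in C$, and invoke minimality. Your slightly more explicit justification in part~b) that $C_\bal$ is not an $\bal'$-set (via the two ergodic measures and the pointwise ergodic theorem) is a valid unpacking of what the paper cites directly from Lemma~\ref{lem:BT}b).
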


\begin{proof}
By Lemma~\ref{lem:BT}, $\DF_{\sMZ}(C_\bal)$ is equal either to
$\{\bal\}$ (in the regular case), or to $P_\bal$. In either case,
every element $\bal'$ of $\DF_{\sMZ}(C_\bal)$ has
$\cI(\bal')=\cI(\bal)$.

Therefore if~$C$ is any minimal subset of $B(\cI(\bal))$ for which
$\DF_{\sMZ}(C)$ intersects $\DF_{\sMZ}(C_\bal)$, then there is some
$\bal'\in\DF_{\sMZ}(C)$ with $\cI(\bal')=\cI(\bal)$. By
Lemma~\ref{lem:supofcpt}, it follows that $\max C \ge \cI(\bal)$.

On the other hand, $\max C \le \cI(\bal)$, since $C\subset
B(\cI(\bal))$. Therefore $\max C = \cI(\bal)$, i.e. $\cI(\bal)\in
C$. By minimality, it follows that $C = C_\bal$ as required.

The statements~a) and~b) are immediate consequences, using the facts
that $\DF_{\sMZ}(C_\bal)$ is either $\{\bal\}$ or $P_\bal$; and that
$C_\bal$ is not an $\bal$-set in the exceptional case.
\end{proof}

By Theorem~\ref{thm:alpha-rep}, and in view of
Example~\ref{ex:not-goober}, we cannot expect every irrational $\bal$,
or any element of an exceptional interval $P_\bal$, to be represented
by a \plaingoober\ in $B(\cI(\bal))$. The next theorem says that all
other $\bv$ can be represented by a \vgoober. Write $Q_\bal =
\emptyset$ if $\bal$ is rational regular; $Q_\bal=\{\bal\}$ if $\bal$
is irrational regular; and $Q_\bal = P_\bal$ if	 $\bal$ is irrational
exceptional.
\begin{theorem}
\label{thm:v-rep}
Let $\bal\in\Delta'$.  Then every $\bv\in\DF(\cI(\bal)) - Q_{\bal}$ is
represented by a \vgoober\ in~$B(\cI(\bal))$.
\end{theorem}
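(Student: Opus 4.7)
The plan is to split the argument by the arithmetic nature of $\bv$. First, if $\bv\in\Q^3$ then $\bv$ is rational regular and $\cI(\bv)$ is a periodic maximal sequence, so $C_\bv$ consists of a single periodic orbit; the inequality $\cI(\bv)\le\cI(\bal)$ from~(\ref{eq:df-char}) gives $C_\bv\subset B(\cI(\bal))$, and any periodic orbit is automatically a $\bv$-minimal set with bounded deviation, so $C_\bv$ itself is a \vgoober. Henceforth assume $\bv$ is irrational; together with $\bv\notin Q_\bal$, this forces $\cI(\bv)<\cI(\bal)$ strictly, since equality would place $\bv$ in $\{\bal\}$ or in the exceptional interval $P_\bal$, each contained in $Q_\bal$.

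Next, by Theorem~\ref{thm:digit-freq}c), the extreme points of $\DF(\cI(\bal))$ consist of rational polyhedral vertices together with at most two irrational extreme points, the latter all lying in $Q_\bal$. Since $\bv$ is irrational and not in $Q_\bal$, $\bv$ is not itself extreme; I would then write $\bv=\sum_{i=1}^{k}\lambda_i\bv_i$ as a strict convex combination ($k\in\{2,3\}$) of rational vertices of $\DF(\cI(\bal))$. If $\bv\in\Intt(\DF(\cI(\bal)))$, this follows from Carath\'eodory's theorem together with the density of rational vertices in $\Ex(\DF(\cI(\bal)))$ (if needed, perturbing an irrational vertex to a nearby rational one, since rational vertices accumulate on each irrational extreme point). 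If $\bv\in\Bd(\DF(\cI(\bal)))$, then $\bv$ lies in the relative interior of some edge whose two endpoints are extreme; neither endpoint can be irrational, because the rational vertices accumulate on each irrational extreme point and so no such edge exists (the exceptional interval $P_\bal$ in the $M_3$ case is the sole edge with irrational endpoints, and $\bv\notin P_\bal$). By the rational case already handled, each $\bv_i$ is represented by a periodic orbit $P_i=C_{\bv_i}\subset B(\cI(\bal))$.

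The final step is to build a \vgoober\ inside $B(\cI(\bal))$ by Sturmian-type coding of the finite orbits $P_1,\ldots,P_k$: fix a totally irrational $\xi\in\T^{k-1}$ and a partition of $\T^{k-1}$ into atoms of measures $\lambda_1,\ldots,\lambda_k$, producing under rotation by $\xi$ an itinerary sequence that prescribes at each step which of the $P_i$ to advance through. Classical equidistribution and the $O(\log r)$ discrepancy bound for irrational rotations give a coded orbit with digit frequency vector $\bv$ and bounded deviation, and minimality of the rotation yields minimality of its orbit closure. The main obstacle is to ensure that the resulting subshift lies in $B(\cI(\bal))$, i.e., that no shift of the concatenated word exceeds $\cI(\bal)$ lexicographically. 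I would address this by inserting short $\beta$-admissible joiner blocks between the periodic words, exploiting the strict inequality $\max_i\cI(\bv_i)<\cI(\bal)$ to carry out a specification-type argument for $\beta$-shifts; the joiners are chosen short enough relative to the periodic blocks that they contribute nothing to the asymptotic frequency and leave the bounded deviation intact. This combinatorial admissibility step is the crux of the argument, and is what forces the use of a coded construction rather than the simpler $C_\bv$, which by Example~\ref{ex:not-goober} can fail to have bounded deviation.
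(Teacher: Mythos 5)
Your rational case and your interior case follow the paper's route (the paper also reduces the interior case to a convex combination of three equal-denominator rationals and a Ziemian-style concatenation), but there are two genuine gaps. First, your treatment of irrational boundary points is wrong: you assert that no boundary segment of $\DF(\cI(\bal))$ can have an irrational endpoint other than the exceptional interval, "because the rational vertices accumulate on each irrational extreme point." Accumulation does not rule this out: in the irrational regular case the rational vertices may limit on $\bal$ from one side only along the boundary, in which case $\Bd(\DF(\cI(\bal)))$ contains a segment joining a rational vertex to $\bal$ itself, and an irrational $\bv$ in the relative interior of that segment lies in $\DF(\cI(\bal))-Q_\bal$ but is not covered by your dichotomy. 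The paper handles exactly this case by invoking a specific fact (Example~36c) of~\cite{beta}) that such a segment is contained in a longer interval of $\DF(\cI(\bal))$ with rational endpoints, after which the Sturmian construction applies; your argument simply denies the case exists.

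Second, the step you yourself identify as the crux — admissibility of the concatenated sequence in $B(\cI(\bal))$ — is not filled by what you propose. A "specification-type argument for $\beta$-shifts" is not available in general ($\beta$-shifts satisfy specification only for a measure-zero set of parameters), and even granting joiner blocks, inserting a nonempty joiner between every pair of periodic blocks contributes a bounded error $\emph{per joiner}$ to $\kappa_\sigma(\uw,s)-s\bv$; summed over $r$ blocks this grows linearly in $r$, destroying bounded deviation unless the joiners are empty (and it also perturbs the frequency unless the periodic blocks grow in length, which again ruins the deviation bound). The paper avoids all of this via Lemma~7 of~\cite{beta}: for rational $\bu_i\in\DF(\cI(\bal))$, \emph{arbitrary} concatenations of the repeating blocks $W_{\bu_i}$ of the periodic infimax sequences $\cI(\bu_i)$ already lie in $B(\cI(\bal))$, with no joiners. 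Note that this uses the infimax representatives $W_{\bu_i}$ rather than arbitrary periodic words with the right frequency, and takes the $\bu_i$ with a common denominator so the blocks have equal length $q$ — both choices are needed for the clean bound $\|\kappa_\sigma(\uw,qr)-qr\bv\|<q$ coming from the Sturmian discrepancy. Without a substitute for that structural lemma, your construction does not close.
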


\begin{proof}
If $\bv\in\DF(\cI(\bal))$ is rational, then $C_\bv =
o(\cI(\bv),\sigma)$ is a periodic orbit (and hence a \vgoober) which
is contained in $B(\cI(\bal))$ by~(\ref{eq:C-alpha-detect}). 

For a rational element $\bu$ of $\Delta'$, let $W_\bu$ be the repeating
block of the periodic sequence $\cI(\bu)$. Lemma~7 of~\cite{beta}
states that if $(\bu_i)$ is a sequence of rational elements
of~$\Delta'$ with $\bu_i\in\DF(\cI(\bal))$ for each~$i$, then the
sequence
\[
\uw = W_{\bu_0}\,W_{\bu_1}\,W_{\bu_2}\,\ldots \,\in\Sigma^+
\]
is an element of $B(\cI(\bal))$. 

Therefore if $\bv\in\Intt(\DF(\cI(\bal)))$ then we can choose
rational elements~$\bu_0$, $\bu_1$, and $\bu_2$ of $\DF(\cI(\bal))$,
with the same denominator, which contain $\bv$ in their convex
hull. The concatenation scheme of the blocks $W_{\bu_0}$, $W_{\bu_1}$,
and $W_{\bu_2}$ used by Ziemian in the proof of Lemma~4.4 of~\cite{Z}
can then be used to construct a \vgoober\ in $B(\cI(\bal))$.

It therefore only remains to consider the case where
$\bv\in\Bd(\DF(\cI(\bal))) - Q_\bal$ is irrational. Then either $\bv$
is contained in an interval~$I$ in~$\Bd(\DF(\cI(\bal)))$ whose
endpoints are rational polyhedral vertices, or $\bal$ is irrational
regular and $\bv$ is contained in an interval~$I$
in~$\Bd(\DF(\cI(\bal)))$ for which one endpoint is a rational
polyhedral vertex, and the other endpoint is~$\bal$. However, in the
latter case, where the vertices of $\DF(\cI(\bal))$ limit on~$\bal$
from one side only, the interval~$I$ is contained in an interval with
rational endpoints (see Example~36c) of~\cite{beta}). Therefore, in
either case, we can find rational elements $\bu_0 = \bp_0/q$ and
$\bu_1 = \bp_1/q$ of $\DF(\cI(\bal))$, having the same
denominator~$q$, such that
\[
\bv = (1-\lambda)\,\frac{\bp_0}{q} \,+\, \lambda\,\frac{\bp_1}{q}
\]
for some irrational~$\lambda\in[0,1]$. These rational vectors
$\bp_i/q$ need not be in reduced form: let $k_0$ and $k_1$ be
the positive integers with the property that when $\bp_i/q$ is written
in reduced form it has denominator $q/k_i$ (so that the word
$W_{\bu_i}^{k_i}$ has length~$q$).

Let $\us\in\{0,1\}^\N$ be the Sturmian sequence~\cite{sturm} of
slope~$\lambda$, and let $\uw\in B(\cI(\bal))$ be the element of
$\Sigma^+$ obtained from $\us$ by the substitution $0\mapsto
W_{\bu_0}^{k_0}$, $1\mapsto W_{\bu_1}^{k_1}$. Since $\us$ is Sturmian,
we have $|a_r-r\lambda|<1$ for all $r\in\N$, where $a_r$ denotes the
number of $1$s in the first~$r$ symbols of~$\us$. Therefore, for
each~$r\in\N$,
\[
||\kappa_\sigma(\uw, qr) - qr\bv|| = ||a_r\bp_1 + (r-a_r)\bp_0 -
qr\bv|| = ||(a_r-r\lambda)(\bp_1-\bp_0)|| < ||\bp_1-\bp_0|| < q.
\]
If~$s$ is any natural number then, writing $s=qr+t$ with $0\le t<q$,
we have 
\[
||\kappa_\sigma(\uw,s) - s\bv || \le ||\kappa_\sigma(\uw,qr) - qr\bv||
+ ||\kappa_\sigma(\sigma^{qr}(\uw), t) - t\bv|| < 2q.
\] 
The existence of a \vgoober\ follows by Remark~\ref{rmk:v-sets}b). (In
fact, since $\uw$ is almost periodic by construction, its orbit
closure is equal to its omega-limit set, and hence is itself a
\vgoober.)
\end{proof}

\section{Dynamical representatives of rotation vectors in the figure
  8 and torus families}
\label{sec:representatives-toruseight}
\subsection{Representatives in the figure eight family}
Because the return map $R_t\colon Z_t\to Z_t$ of the restriction of
the figure eight map $f_t$ to the subset $Z_t$ of the figure eight
space~$X$ is topologically conjugate to a symbolic $\beta$-shift
(Lemma~\ref{lem:conjtobeta}d), translating results about the existence
of $\bv$-sets and directional measures to this context is
straightforward. On the other hand, uniqueness and non-existence
results need an additional result (Lemma~\ref{lem:extrememin} below)
to rule out invariant subsets which are not contained in~$Z_t$.

Recall that $\cC\subset[0,1]$ is the compact set of parameters~$t$ for
which $\ell(t)\in Z_t$; and that there is a ``kneading sequence'' map
$K\colon\cC\to\Sigma_+$, which is an order-preserving homeomorphism
onto the set $\{\uw\in\Max\,:\,\uw\ge 2\overline{1}\}$ of sufficiently
large maximal sequences. It is a consequence of Theorem~22
of~\cite{lex} that an infimax sequence $\cI(\bal)$ lies in this set
if and only if $\alpha_0<\alpha_2$, and we therefore restrict
attention to the subset
\[
\Delta'' = \{\bal\in\Delta\,:\,\alpha_0<\alpha_2\}
\]
of $\Delta'$. Given $\bal\in\Delta''$, let \[t(\bal) =
K^{-1}(\cI(\bal))\in\cC\] be the unique parameter~$t$ for which
$K(t)=\cI(\bal)$.

Recall that the function $a\colon[0,1]\to\cC$ is defined by $a(t) =
\max\{t'\in\cC\,:\,t'\le t\}$. We now define $b\colon[0,1]\to\cC$ by
\[
b(t) = 
\left\{
\begin{array}{ll}
\max\{t'\in\cC\,:\,t'\le t \text{ and }K(t')\text{ is
 an infimax sequence}\} \quad& \text{ if } t>0,\\
0 & \text{ if }t=0.
\end{array}
\right.
\]
The maximum exists since the set of infimax sequences which are less
than or equal to~$K(t)$ has a maximum (as in~(\ref{eq:infi})) and~$K$
is an order-preserving homeomorphism. It is immediate from the
definitions that $b(a(t))=a(b(t))= b(t)$ for all~$t$.

\begin{lemma}
\label{lem:alem} 
Let $t\in [0,1]$. Then $\rho_8(t) = \rho_8(b(t)) = \Pi(\DF(K(b(t))))$.
\end{lemma}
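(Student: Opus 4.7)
The plan is to chain Theorem~\ref{thm:rotequal} with equation~(\ref{eq:infi}) (the infimax characterisation of digit frequency sets) and exploit the fact, from Lemma~\ref{lem:conjtobeta}c), that $K$ is an order-preserving homeomorphism from $\cC$ onto $\{\uw\in\Max:\uw\ge 2\overline{1}\}$.

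First I would dispose of the second equality. Applying Theorem~\ref{thm:rotequal} to the parameter $b(t)$ gives $\rho_8(b(t)) = \Pi(\DF(K(a(b(t))))) = \Pi(\DF(K(b(t))))$, where the last step uses $a(b(t)) = b(t)$ (which follows immediately from $b(t)\in\cC$). So only the first equality $\rho_8(t)=\rho_8(b(t))$ needs work, and since $\Pi$ is a homeomorphism onto its image, this reduces to showing
\[
\DF(K(a(t))) \;=\; \DF(K(b(t))).
\]
The inclusion $\DF(K(b(t)))\subseteq \DF(K(a(t)))$ is immediate: $b(t)\le a(t)$, so $K(b(t))\le K(a(t))$ by order-preservation of $K$, and $\DF$ is non-decreasing by Theorem~\ref{thm:digit-freq}b).

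For the reverse inclusion, I would apply~(\ref{eq:infi}) with $\uw = K(a(t))$, writing
\[
\DF(K(a(t))) \;=\; \DF(\cI(\bal_*)),
\]
where $\cI(\bal_*) = \max\{\cI(\bal'):\bal'\in\Delta',\,\cI(\bal')\le K(a(t))\}$. The key claim is $\cI(\bal_*)\le K(b(t))$, after which monotonicity of $\DF$ gives $\DF(\cI(\bal_*))\subseteq \DF(K(b(t)))$, closing the sandwich. To verify the claim, I would show that every infimax sequence $\cI(\bal')\le K(a(t))$ satisfies $\cI(\bal')\le K(b(t))$, by splitting into two cases: if $\cI(\bal')\ge 2\overline{1}$, then by Lemma~\ref{lem:conjtobeta}c) $\cI(\bal') = K(s)$ for some $s\in\cC$; order-preservation of $K^{-1}$ gives $s\le a(t)$, and since $K(s)$ is an infimax and $s\le t$, $s$ is a candidate in the definition of $b(t)$, so $s\le b(t)$ and hence $\cI(\bal')\le K(b(t))$. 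If instead $\cI(\bal')<2\overline{1}$, then $\cI(\bal')<2\overline{1}=K(0)\le K(b(t))$ trivially.

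The only subtle step is this case split at the threshold $2\overline{1}$, needed because the map $K$ only parameterises infimax sequences that are at least $2\overline{1}$; the point is that since $K(b(t))\ge 2\overline{1}$ always, smaller infimax sequences are automatically dominated by $K(b(t))$ and pose no threat to the claim. Once the claim is in hand, the two inclusions produce $\DF(K(b(t)))\subseteq \DF(K(a(t)))=\DF(\cI(\bal_*))\subseteq \DF(K(b(t)))$, completing the proof.
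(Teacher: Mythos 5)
Your proof is correct and follows essentially the same route as the paper's: both rest on Theorem~\ref{thm:rotequal} applied at $t$ and at $b(t)$ (with $a(b(t))=b(t)$), the infimax characterisation~(\ref{eq:infi}), and the order-preservation of $K$ and monotonicity of $\DF$. The only difference is cosmetic — the paper asserts directly that the maximal infimax below $K(a(t))$ equals $K(b(t))$, whereas you sandwich the two digit frequency sets, handling explicitly the (harmless) infimax sequences below $2\overline{1}$ that lie outside the range of $K$.
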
 
\begin{proof}
Let $\cI(\bal)$ be the greatest infimax sequence which is not greater
than $K(a(t))$. Then we have $\cI(\bal) = K(b(a(t)))=K(b(t))$ by definition of
the function~$b$ and the injective monotonicity of~$K$.

It follows that
\[
\rho_8(t) = \rho_8(a(t)) = \Pi(\DF(K(a(t)))) = \Pi(\DF(\cI(\bal))) =
\Pi(\DF(K(b(t)))) = \rho_8(b(t)),
\]
as required. Here the first, second, and last equalities are given by
Theorem~\ref{thm:eightrot}; the third is by~(\ref{eq:infi}); and the
fourth is by the first paragraph of the proof.
\end{proof}

\begin{remark} Lemma~\ref{lem:alem} shows that the bifurcation
set $\cB$ of parameters at which $\rho_8$ is not locally constant (see
Theorem~\ref{thm:eightrot}) is given by $\cB = K^{-1}(\cJ)$.
\end{remark}

For each $\bal\in\Delta''$, define $D_\bal\subset X$ by
\[
D_\bal = \Cl\left(o\,(\ell(t(\bal)),\, f_{t(\bal)})\,\right).
\]
Since the return map $R_{t(\bal)}\colon Z_{t(\bal)}\to Z_{t(\bal)}$
and $\sigma\colon B(\cI(\bal))\to B(\cI(\bal))$ are conjugate by
Lemma~\ref{lem:conjtobeta}d) and the definition of~$t(\bal)$, $D_\bal$
is conjugate to a two step tower over the $\sigma$-minimal set~$C_\bal$,
and is therefore a minimal set for $f_{t(\bal)}$.

\begin{lemma}
\label{lem:extrememin}
Let $\bal\in\Delta''$, and $D$ be a minimal set for $f_{t(\bal)}$
with the property that $\Pi(\bal)\in\rot_{\sMZ}(D, f_{t(\bal)},
\Gamma)$. Then $D=D_\bal$.
\end{lemma}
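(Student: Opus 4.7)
The plan is to transport the claim to the symbolic $\beta$-shift via the conjugacy $h_s\colon Z_s\to B(\cI(\bal))$ of Lemma~\ref{lem:conjtobeta}(d), where $s=t(\bal)$. Writing $D^* := D\cap S_1$, my goal, assuming $D\subseteq Y_s$, is to exhibit $D^*$ as an $R_s$-minimal subset of $Z_s$ whose image $h_s(D^*)$ is a $\sigma$-minimal compact subset of $B(\cI(\bal))$ with $\bal\in\DF_{\sMZ}(h_s(D^*))$. At that point Lemma~\ref{lem:supofcpt} forces $\max h_s(D^*)\ge\cI(\bal)$, while $h_s(D^*)\subseteq B(\cI(\bal))$ gives $\max h_s(D^*)\le\cI(\bal)$; so $\cI(\bal)\in h_s(D^*)$, minimality yields $h_s(D^*)=C_\bal$, and pulling back gives $\ell(s)\in D$. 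Minimality of $D$ together with $D_\bal=\Cl(o(\ell(s),f_s))$ then forces $D=D_\bal$.

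The hardest step is the preliminary reduction $D\subseteq Y_s$. Since $\bal\in\Delta''$ gives $\alpha_2>0$, the first coordinate of $\bv:=\Pi(\bal)$ is strictly positive, which excludes the fixed point in the edge $A$ and any minimal set confined to $S_2$, since $\Gamma$ has vanishing first coordinate on $A\cup S_2$. If $D\cap\Intt(I_s)\ne\emptyset$ then $f_s$ collapses $D\cap I_s$ to the single point $f(\ell(s))$, so minimality of $D$ forces $D$ to be a periodic orbit through $f(\ell(s))$; in this case $\bv$ is rational, $\bal$ is rational regular, $\cI(\bal)$ is periodic, $\ell(s)$ is $f_s$-periodic, and $D=D_\bal$ can be identified directly from the coding. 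The avoidance $D\cap(A\cup b\cup c\cup e)=\emptyset$ I plan to handle in the spirit of Step~2 of the proof of Theorem~\ref{thm:rotequal}: a point of $D$ whose orbit enters an orientation-reversing edge or the edge $A$ would, after the orientation-preserving substitution described there, produce a shift-invariant subset of $\Sigma^+$ whose maximum exceeds $\cI(\bal)$, contradicting what Lemma~\ref{lem:supofcpt} will give once the symbolic reduction is in place.

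Granting $D\subseteq Y_s$, the set $D^*=D\cap S_1$ is nonempty because the positive first coordinate of $\bv$ forces positive-frequency visits to the edges $\two\cup\twor\subset S_1$; the return map $R_s$ on $Y_s$ has return time bounded by $2$, so $D^*$ is $R_s$-invariant, and $f_s$-minimality of $D$ transfers to $R_s$-minimality of $D^*$ in the standard way. A Misiurewicz--Ziemian analogue of Lemma~\ref{lem:induce}, proved by the same reasoning (extract a convergent subsequence of $\beta_{R_s}(y_n,k_n)/k_n\in\Delta$ using compactness, control the boundary terms via the bounded return times, and exploit homogeneity of $\Pi=L/M$), gives
\[
\rot_{\sMZ}(D,f_s,\Gamma) \;=\; \Pi\bigl(\rot_{\sMZ}(D^*,R_s,\beta)\bigr).
\]
Combined with $\Pi(\bal)\in\rot_{\sMZ}(D,f_s,\Gamma)$ and injectivity of $\Pi|_\Delta$, this yields $\bal\in\rot_{\sMZ}(D^*,R_s,\beta)=\DF_{\sMZ}(h_s(D^*))$, supplying the input for the symbolic argument of the first paragraph. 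The only genuine obstacle is the $Y_s$-reduction; the remainder is a mechanical transcription of the argument behind Theorem~\ref{thm:alpha-rep} through the conjugacy $h_s$.
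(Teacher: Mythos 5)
Your second half (the symbolic transcription via $h_s$, Lemma~\ref{lem:supofcpt}, and the conclusion $\cI(\bal)\in h_s(D^*)\Rightarrow \ell(s)\in D\Rightarrow D=D_\bal$) is sound, and your treatment of the case $D\cap\Intt(I_s)\ne\emptyset$ actually works even more directly than you say: once $f(\ell(s))\in D$, you have $f(\ell(s))\in D\cap D_\bal$ and two intersecting minimal sets coincide. But the proposal has a genuine gap exactly where you locate it, and the sketch you offer for closing it cannot work. The substitution of Step~2 of Theorem~\ref{thm:rotequal} takes a point $x$ whose orbit uses $A$, $b$, $c$ or $\twor$ and manufactures a \emph{different} point $y\notin D$ with the same Birkhoff average; it constrains nothing about $D$ itself, so it cannot show $D\subset W$. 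Moreover, the contradiction you aim for is pointed the wrong way: Lemma~\ref{lem:supofcpt} gives a \emph{lower} bound $\max X\ge\cI(\bal)$, so producing a set ``whose maximum exceeds $\cI(\bal)$'' is consistent with it, not contradictory. I see no direct argument that a minimal set with $\Pi(\bal)$ in its Misiurewicz--Ziemian rotation set must avoid the reversing edges; this fact is a \emph{consequence} of the lemma (since $D_\bal\subset Y_s$), not an available input.

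The paper avoids the $Y_s$-reduction entirely by generalizing the observation you already make in your plateau case. If $D$ were disjoint from the closed interval $[\ell(s),r(s)]$, then by compactness it would be disjoint from $(\ell(t),r(t))$ for some $t<s$, hence $D\subset X_t$ and $f_s=f_t=f$ on $D$, giving $\rot_{\sMZ}(D,f_s,\Gamma)\subset\rot_{\sMZ}(X_t,f,\Gamma)=\rho_8(t)=\Pi(\DF(K(b(t))))$; since $K(b(t))<\cI(\bal)$, the characterization~(\ref{eq:df-char}) shows this set does not contain $\Pi(\bal)$ --- here the exclusion of reversing edges is handled \emph{indirectly}, because it is already built into the computation of $\rho_8(t)$ in Theorem~\ref{thm:rotequal}. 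So $D$ meets $[\ell(s),r(s)]$, one application of $f_s$ lands in $D_\bal$, and minimality finishes. I recommend replacing your reduction by this argument; the symbolic machinery in your first and third paragraphs then becomes unnecessary.
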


\begin{proof}
Recall from Section~\ref{sec:inv-sub} that $X_t\subset X$ is defined
for each $t\in[0,1]$ by \[X_t=\{x\in X\,:\,o(x,f_t)\subset
X-(\ell(t),r(t))\,\}.\]

Suppose that $t<t(\bal)$. Then $b(t)<t(\bal)$, so that
$K(b(t))<\cI(\bal)$, and hence $\Pi(\DF(K(b(t))))$ does not contain
$\Pi(\bal)$ by~(\ref{eq:df-char}). It follows that $D$ cannot be
contained in $X_t$, since this would yield the contradiction
\[\Pi(\bal)\in\rot_{\sMZ}(D, f_{t(\bal)}, \Gamma) \subset \rot_{\sMZ}(X_t,
f, \Gamma) = \rho_8(t) = \Pi(\DF(K(b(t)))),\] 
using Lemma~\ref{lem:alem} and that $f_{t(\bal)}=f_t=f$ on $X_t$.

It follows then by the compactness of~$D$ that it contains a point $x\in
[\ell(t(\bal)), r(t(\bal))]$. Since $f_{t(\bal)}(x) =
f_{t(\bal)}(\ell(t(\bal))) \in D\cap D_\bal$, the minimal sets $D$
and $D_\bal$ are equal as required.
\end{proof} 

Since all of the rotation sets $\rho_8(t)$ are of the form
$\Pi(\DF(\cI(\bal)))$ for some $\bal\in\Delta''$, and $\Pi$ is
projective, they can be classified as either rational regular
(i.e.\ polygonal), irrational regular (i.e.\ having a single irrational
extreme point~$\bu_t=\Pi(\bal)$), or irrational exceptional (i.e.\ having two
irrational extreme points which are the endpoints of an exceptional
interval $P_t = \Pi(P_\bal)$ in the boundary of $\rho_8(t)$).

\begin{theorem}
\label{thm:eight2}
Let $t\in [0,1]$. 
\begin{enumerate}[a)]
\item If $\rho_8(t)$ is rational regular, then every $\bv\in\rho_8(t)$
  is represented by a \vgoober\ for~$f_t$. Thus, $\rho_{em}(t) =
  \rho_8(t)$.
\item If $\rho_8(t)$ is irrational regular then:
\begin{enumerate}[i)]
\item Every $\bv\in\rho_8(t)$ is represented by a $\bv$-minimal
  set for $f_t$. In particular, $\rho_{em}(t)=\rho_8(t)$.
\item Every $\bv\in\rho_8(t) - \{\bu_t\}$ is represented by a
  \vgoober\ for $f_t$.
\end{enumerate}
There exist irrational regular $\rho_8(t)$ for which
 $f_t$ has no \goober{$\bu_t$}.
\item If $\rho_8(t)$ is irrational exceptional then:
\begin{enumerate}[i)]
\item Every $\bv\in\rho_8(t) - P_t$ is represented by a \vgoober\ for~$f_t$.
\item There is a unique $f_t$-minimal set~$D$ such that
  $\rot_{\sMZ}(D,f_t,\Gamma)$ intersects $P_t$, namely $D=D_\bal$
  where $t=t(\bal)$. In particular, if $\bv\in P_t$ then there is no $\bv$-minimal
  set for~$f_t$.
\item Any ergodic invariant measure for~$f_t$ representing any vector
  $\bv\in P_t$ is lost. In particular, the two ergodic invariant
  measures on $D_\bal$, which represent the endpoints of~$P_t$, are
  lost.
\end{enumerate}

\end{enumerate}

\end{theorem}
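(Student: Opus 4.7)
The plan is to transfer the representative-existence results from Section~\ref{sec:representatives-beta} via the return-map conjugacy. Fix~$t\in[0,1]$ and write $\cI(\bal)=K(b(t))$, so that $\rho_8(t)=\Pi(\DF(\cI(\bal)))$ by Lemma~\ref{lem:alem} and Theorem~\ref{thm:rotequal}. By Lemma~\ref{lem:conjtobeta}d) the map $h_t\colon Z_t\to B(\cI(\bal))$ conjugates $R_t$ to~$\sigma$, and by the proof of Theorem~\ref{thm:rotequal} the observable $\kappa$ corresponds to $\beta$, while the induced cocycle $\Phi$ over $R_t$ decomposes as $\Phi=L\circ\beta$ with $N=M\circ\beta$ and $\Pi(\bx)=L(\bx)/M(\bx)$. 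The first step will be to prove a transfer lemma: for any compact $\sigma$-invariant $W\subset B(\cI(\bal))$, the set $D(W):=h_t^{-1}(W)\cup f_t(h_t^{-1}(W)\cap C')$ is compact and $f_t$-invariant; it is $f_t$-minimal iff $W$ is $\sigma$-minimal; the rotation set of $W$ for $(\sigma,\kappa)$ is carried to the rotation set of $D(W)$ for $(f_t,\Gamma)$ by~$\Pi$; and bounded deviation transfers between the two sides in both directions.

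\textbf{Existence of representatives.} With the transfer lemma in hand, part~a) follows because every rational $\bv\in\rho_8(t)$ has rational preimage $\bbeta\in\DF(\cI(\bal))$, which is represented at the symbolic level by a periodic orbit of $\sigma$ and hence, after transfer, by a periodic orbit of $f_t$ (which is trivially a \vgoober). Parts~b)ii) and~c)i) follow from Theorem~\ref{thm:v-rep}, applied to each $\bbeta=\Pi^{-1}(\bv)\in\DF(\cI(\bal))-Q_\bal$. For the endpoint case $\bv=\bu_t$ in~b)i), transfer the $\bal$-minimal set $C_\bal$ given by Lemma~\ref{lem:BT}a) to get $D_\bal$, which is a $\bu_t$-minimal set. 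Combining b)i) with b)ii) gives $\rho_{em}(t)=\rho_8(t)$ in the irrational regular case, and part~a) gives the same in the rational regular case, since existence of a $\bv$-minimal set for every $\bv$ immediately yields $\bv\in\rho_{em}(t)$.

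\textbf{Uniqueness and non-existence.} For c)ii), any $f_t$-minimal set whose $\rot_{\sMZ}$ meets $P_t$ at some $\Pi(\bbeta)$ with $\bbeta\in P_\bal$ must equal $D_\bbeta=D_\bal$ by Lemma~\ref{lem:extrememin}, and $D_\bal$ is not a $\bv$-set for any $\bv\in P_t$ because its rotation set equals all of~$P_t$ (by Lemma~\ref{lem:BT}c) transferred through~$\Pi$). Part~c)iii) follows at once: a directional ergodic measure representing $\bv\in P_t$ would have a $\bv$-set for its support, which would then contain a $\bv$-minimal set, contradicting c)ii). For the non-existence assertion in~b), use Example~\ref{ex:not-goober} to produce a totally irrational regular $\bal$ for which $C_\bal$ lacks bounded deviation; by Lemma~\ref{lem:extrememin} the only candidate for a \goober{$\bu_t$} with $t=t(\bal)$ is $D_\bal$, which inherits the failure of bounded deviation from $C_\bal$ via the reverse direction of the transfer lemma.

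\textbf{Main obstacle.} The subtle point is the bounded-deviation portion of the transfer lemma, in both directions. The forward direction is a routine estimate using the linearity of $L$ and $M$ and the uniform boundedness of the return time $N$, together with interpolating over at most one extra step between consecutive returns to~$Z_t$. The reverse direction is more delicate: bounded deviation of $\Phi_R(w,j)-N_R(w,j)\Pi(\bal)$ in~$\R^2$ only a~priori constrains two linear combinations of the deviation $\bx_j=\beta_R(w,j)-j\bal\in\R^3$. However $\bx_j$ lies in the hyperplane $\sum_i x_i=0$ (since both normalised frequencies sum to~$1$), which makes $\bx\mapsto L(\bx)-M(\bx)L(\bal)/M(\bal)$ injective on this hyperplane and forces $\bx_j$ itself to be bounded. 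I expect this linear-algebraic reduction, connecting the three-dimensional symbolic deviation to the two-dimensional figure-eight deviation, to be the main point requiring care in writing up the argument.
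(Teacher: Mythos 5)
Your strategy is the same as the paper's: reduce to the symbolic level via the conjugacy of Lemma~\ref{lem:conjtobeta}d), invoke Theorems~\ref{thm:alpha-rep} and~\ref{thm:v-rep} together with Lemmas~\ref{lem:BT} and~\ref{lem:extrememin}, and transfer minimality, rotation vectors and bounded deviation through the two-step tower. Your treatment of the reverse bounded-deviation transfer is in fact more explicit than the paper, which simply asserts the ``if and only if'' for \vgoober{s}: your observation that the kernel of $\bx\mapsto L(\bx)-M(\bx)L(\bal)/M(\bal)$ is exactly $\operatorname{span}(\bal)$, which meets the zero-sum hyperplane only at the origin, is precisely what is needed, and it checks out.

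There is, however, one genuine gap, in the final statement of part~b). The vector $\bal$ produced by Example~\ref{ex:not-goober} does \emph{not} lie in $\Delta''$: from the eigenvector equations one gets $\alpha_0=\lambda_1(\lambda_1-1)\alpha_2$, and for $A_1$ the Perron root is $\lambda_1\approx 1.80$, so $\alpha_0>\alpha_2$ (and $\lambda_1$ only increases with $n$). Hence $\cI(\bal)<2\overline{1}$, $\cI(\bal)$ is not the kneading sequence of any parameter, and $t(\bal)$ is undefined, so the example cannot be fed directly into Lemma~\ref{lem:extrememin} and your transfer lemma. One must first transport the example into $\Delta''$; the paper does this by replacing $\bal$ with $\bbeta=(\alpha_1,\alpha_0,\alpha_1+\alpha_2)/(1+\alpha_1)\in\Delta''$, whose infimax is obtained by applying the substitution $\Lambda_0$ to the fixed point of $\Lambda_1$, and re-running the eigenvalue estimate to see that $C_\bbeta$ again fails to have bounded deviation. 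Two smaller points. First, your explicit argument for part~a) produces \vgoober{s} only for \emph{rational} $\bv$, whereas a rational polygon contains irrational points; the fix is immediate (quote Theorem~\ref{thm:v-rep} with $Q_\bal=\emptyset$, exactly as you do for b)ii) and c)i)), but as written the case is not covered. Second, when $a(t)\neq b(t)$ the conjugacy of Lemma~\ref{lem:conjtobeta}d) identifies $Z_t$ with $B(K(a(t)))$, which may strictly contain $B(\cI(\bal))=B(K(b(t)))$; this is harmless because it happens only in the rational regular case, where only existence of representatives is asserted, but the reduction deserves a sentence (the paper handles it by noting $f_t=f_{b(t)}$ on $Y_{b(t)}$).
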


\begin{proof}
Let $\bal\in\Delta''$, and write $t:=t(\bal)$, so that $K(t) =
\cI(\bal)$, and $\rho_8(t) = \Pi(\DF(\cI(\bal)))$ by
Lemma~\ref{lem:alem}.

Lemma~\ref{lem:conjtobeta}d) states that $h_t\colon Z_t\to
B(\cI(\bal))$ conjugates the return map $R_t\colon Z_t\to Z_t$ and the
symbolic $\beta$-shift $\sigma\colon B(\cI(\bal))\to
B(\cI(\bal))$. There is therefore a bijection $M_t$ from the set of
minimal sets of $\sigma\colon B(\cI(\bal))\to B(\cI(\bal))$ to the set
of minimal sets of $f_t\colon Y_t\to Y_t$, which sends the minimal set
containing~$\uw$ to the minimal set containing
$h_t^{-1}(\uw)$. Moreover, $C$ is a $\bv$-minimal set (respectively \vgoober) for $\sigma$ if
and only if $M_t(C)$ is a $\Pi(\bv)$-minimal set (respectively \goober{$\Pi(\bv)$}) for~$f_t$.

If $t\in[0,1]$ is not of the form $t(\bal)$ for any $\bal\in\Delta''$,
then $\rho_8(t)=\rho_8(b(t))$ is rational regular. Since $b(t)<t$, we
have that $f_t = f_{b(t)}$ on $Y_{b(t)}$, and hence any
$f_{b(t)}$-minimal set in $Y_{b(t)}$ is also an $f_t$-minimal set.

Parts a), b)ii), and c)i) therefore follow from
Theorem~\ref{thm:v-rep}, and part b)i) follows from
Theorem~\ref{thm:alpha-rep}a). 

For the final statement of part b), pick $\bal$ to be the normalized
Perron-Frobenius eigenvalue of $A_1$ as in
Example~\ref{ex:not-goober}. This $\bal$ does not lie in $\Delta''$,
but $\bbeta = (\alpha_1, \alpha_0, \alpha_1+\alpha_2)/(1+\alpha_1) \in
\Delta''$ has analogous properties (in the language of~\cite{lex},
$\bbeta$ has {\em itinerary} $0\overline{1}$). In particular, $\bbeta$
is totally irrational regular, and $\cI(\bbeta)$ is obtained by
applying $\Lambda_0$ to the fixed point of the
substitution~$\Lambda_1$; and, by the same argument as in
Example~\ref{ex:not-goober}, $C_\bbeta$ is not a
\goober{$\bbeta$}. Therefore $D_\bbeta = M_{t(\bbeta)}(C_\bbeta)$ is not a
\goober{$\bu_{t(\bbeta)}$} for~$f_{t(\bbeta)}$. Since~$D_\bbeta$ is
the unique $\bu_{t(\bbeta)}$-minimal set for~$f_{t(\bbeta)}$ by
Lemma~\ref{lem:extrememin}, the result follows.

Parts c)ii) and c)iii) follow from Lemma~\ref{lem:extrememin},
Theorem~\ref{thm:alpha-rep}b), and Lemma~\ref{lem:BT}b).
\end{proof}

\subsection{Representatives in the torus family}
Recall from Theorem~\ref{thm:param-fam} and the construction of
Section~\ref{sec:torus-family} that the homeomorphism
$\Phi_t\colon\T^2\to\T^2$ has non-wandering set contained in
$\Lambda_t \cup \overline{S}$; that~$\overline{S}$ consists of fixed
points of $\Phi_t$; and that $\Phi_t|_{\Lambda_t}$ is topologically
conjugate to the natural extension $\hat{f}_t\colon
\varprojlim(X,f_t)\to\varprojlim(X,f_t)$ of the figure eight
map~$f_t$. 

In order to extend the results of Theorem~\ref{thm:eight2} to the
torus family, it is therefore only necessary to understand the
relationship between minimal sets and ergodic invariant measures of a
map and its natural extension. Given a continuous map $g\colon Z\to Z$
of a compact metric space, we write
$\hg\colon\varprojlim(Z,g)\to\varprojlim(Z,g)$ for its natural
extension, and $\pi_0\colon \varprojlim(Z,g)\to Z$ for the projection
$(z_0,z_1,\ldots)\mapsto z_0$. We also denote by $\Min(g)$ the
collection of $g$-invariant minimal subsets of~$Z$.

The following result is folklore: see page 28 of~\cite{furst},
Theorem~3.2 of~\cite{para}, \cite{brown} and~\cite{kenn}.

\begin{lemma}
\label{lem:invlim}
Let $g\colon Z\to Z$ be a continuous map of a compact metric
space. Then the maps $C \mapsto \pi_0(C)$ and $\mu \mapsto
(\pi_0)_\ast(\mu)$ are bijections $\Min(\hg)\to\Min(g)$ and
\mbox{$\cM_e(\hg)\to\cM_e(g)$} respectively.
\end{lemma}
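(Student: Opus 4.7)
The plan is to construct explicit inverses in each direction and verify they are mutually inverse. In both cases the forward map ($\pi_0$ or its pushforward) is the easy direction, and the work is to produce a canonical lift back to $\hg$.

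\emph{Minimal sets.} First, $C\mapsto\pi_0(C)$ sends $\Min(\hg)$ to $\Min(g)$: $\pi_0(C)$ is compact and $g$-invariant by continuity of $\pi_0$ and the semiconjugacy $\pi_0\circ\hg=g\circ\pi_0$, and a continuous image of a minimal system is minimal. For the inverse, given $A\in\Min(g)$, set $L(A)=\varprojlim(A,g|_A)\subset\varprojlim(Z,g)$. This set is $\hg$-invariant and nonempty, since $g|_A$ is surjective (because $g(A)$ is a nonempty closed invariant subset of the minimal set $A$), so compatible backward orbits exist. The non-obvious point is that $L(A)$ is itself $\hg$-minimal. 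I would prove this by noting that any $\hat y\in L(A)$ has $y_i=g^{k-i}(y_k)$ for $i\le k$, so its first $k+1$ coordinates equal $T_k(y_k)$, where $T_k\colon A\to A^{k+1}$, $T_k(x)=(g^k(x),g^{k-1}(x),\ldots,x)$, is continuous. Minimality of $(A,g|_A)$ then forces $\hg^n(\hat x)$ to match $\hat y$ on the first $k+1$ coordinates to any prescribed accuracy, for a syndetic set of $n$ and every $\hat x\in L(A)$. Given this, $\pi_0\circ L=\mathrm{id}$ follows from surjectivity of $g|_A$. Conversely, if $C\in\Min(\hg)$ and $A=\pi_0(C)$, then by minimality $\hg(C)=C$, so $\hg$ restricts to a homeomorphism of $C$ and $x_i=\pi_0(\hg^{-i}(\hat x))\in A$ for every $\hat x\in C$; thus $C\subset L(A)$, and minimality of both sides yields equality.

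\emph{Ergodic measures.} For $\hat\mu\in\cM_e(\hg)$, the pushforward $(\pi_0)_\ast\hat\mu$ is $g$-invariant by the semiconjugacy, and ergodic because preimages under $\pi_0$ of $g$-invariant sets are $\hg$-invariant. For the inverse, lift $\mu\in\cM_e(g)$ via Kolmogorov extension by declaring $\hat\mu(\pi_n^{-1}(B))=\mu(B)$; consistency follows from $g_\ast\mu=\mu$. A short computation using $\hg^{-1}(\pi_n^{-1}(B))=\pi_{n-1}^{-1}(B)$ for $n\ge 1$, and $\hg^{-1}(\pi_0^{-1}(B))=\pi_0^{-1}(g^{-1}(B))$, shows simultaneously that $\hat\mu$ is $\hg$-invariant and that any $\hg$-invariant lift of $\mu$ must agree with $\hat\mu$ on every cylinder, giving both well-definedness and uniqueness. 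Ergodicity of $\hat\mu$ is the classical fact that the natural extension is measure-theoretically isomorphic to its base: the $\sigma$-algebras $\hg^n\bigl(\pi_0^{-1}(\mathrm{Borel}(Z))\bigr)$ increase to the full Borel $\sigma$-algebra of $\hat Z$, so any $\hg$-invariant $L^2$ function agrees almost surely with a $\pi_0$-pullback and is therefore constant by ergodicity of $\mu$.

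I expect the main obstacle to be the $\hg$-minimality of $L(A)$. When $g|_A$ is non-injective, multiple backward orbits lie over a single point and the topology of $L(A)$ is strictly richer than that of $A$, so density of forward $g$-orbits in $A$ does not automatically yield density of $\hg$-orbits on every cylinder depth. The continuous maps $T_k$ provide precisely the uniform-in-$k$ control needed to bridge this gap.
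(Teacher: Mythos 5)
Your proof is correct. The paper itself gives no argument for this lemma --- it is stated as folklore with citations --- so there is no internal proof to compare against; what you have written is a complete, self-contained justification along the standard lines of the cited references. The one genuinely delicate point is exactly the one you isolate, namely that $L(A)=\varprojlim(A,g|_A)$ is $\hg$-minimal even when $g|_A$ is non-injective, and your argument via the maps $T_k$ handles it correctly: for $n\ge k$ the first $k+1$ coordinates of $\hg^n(\hat{x})$ equal $T_k(g^{n-k}(x_0))$, so density (indeed syndetic recurrence) of the $g$-orbit of $x_0$ in $A$, together with uniform continuity of $T_k$ on the compact set $A$, gives density of the $\hg$-orbit of $\hat{x}$ in $L(A)$. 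The remaining steps --- surjectivity of $g|_A$ from minimality (so that $L(A)\neq\emptyset$ and $\pi_0(L(A))=A$), the inclusion $C\subset L(\pi_0(C))$ via invertibility of $\hg$ on the inverse limit, the Kolmogorov-extension construction of the unique $\hg$-invariant lift of $\mu$ using $\pi_{n-1}=\pi_n\circ\hg$, and ergodicity of the lift via martingale convergence along the increasing $\sigma$-algebras $\pi_n^{-1}(\mathrm{Borel}(Z))$ --- are all standard and correctly executed.
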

We then immediately have:
\begin{theorem}
\label{thm:torus2}
All of the statements of Theorem~\ref{thm:eight2} hold when $f_t$ is
replaced by $\Phi_t$ and $\rho_8(t)$ is replaced by $\rho(t)$.
\end{theorem}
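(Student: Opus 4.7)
The plan is to transfer each assertion of Theorem~\ref{thm:eight2} from $(f_t,X)$ to $(\Phi_t,\T^2)$ via the conjugacy $h_t\colon\varprojlim(X,f_t)\to\Lambda_t$ between $\Phi_t|_{\Lambda_t}$ and the natural extension $\hf_t$ provided by Theorem~\ref{thm:param-fam}b), combined with Lemma~\ref{lem:invlim} and a pointwise comparison of displacement cocycles. Lemma~\ref{lem:invlim} gives bijections $\Min(f_t)\leftrightarrow\Min(\hf_t)$ and $\cM_e(f_t)\leftrightarrow\cM_e(\hf_t)$ via $\pi_0$; pushing forward by $h_t$ yields bijections $\Min(f_t)\leftrightarrow\Min(\Phi_t|_{\Lambda_t})$ and $\cM_e(f_t)\leftrightarrow\cM_e(\Phi_t|_{\Lambda_t})$ under which $D$ corresponds to $D':=h_t(\pi_0^{-1}(D))$ and $\mu$ to $\mu':=(h_t)_\ast\hat\mu$. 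In the Brown/Barge--Martin construction underlying Theorem~\ref{thm:param-fam}, $g_t$ may be chosen so that $g_t\circ h_t=\pi_0$ on $\varprojlim(X,f_t)$: defining $g_t=\pi_0\circ h_t^{-1}$ on $\Lambda_t$ and extending arbitrarily to $M$ gives a map within $\veps$ of the identity satisfying $H_t\circ g_t=g_t\circ\varphi_t$, hence meeting the requirements of Theorem~\ref{thm:param-fam}. With this choice, $g_t(D')=D$ and $(g_t)_\ast\mu'=\mu$.

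Next I would extract from the proof of Theorem~\ref{thm:torusrot} the uniform estimate
\[
\|\delta^t_{\Phi_t}(z,r)-\gamma^t_{f_t}(g_t(z),r)\|\le C\qquad\text{for all }z\in\Lambda_t,\ r\in\N.
\]
This is implicit in that proof: Step~1 gives $(\eta^t\circ g_t)_{\Phi_t}(z,r)=\gamma^t_{f_t}(g_t(z),r)$, using $\eta^t|_X=\gamma^t$ and the semi-conjugacy $g_t\circ\Phi_t|_{\Lambda_t}=f_t\circ g_t|_{\Lambda_t}$, while Step~2 shows $\delta^t$ and $\eta^t\circ g_t$ are cohomologous with respect to $\Phi_t$ with uniformly bounded coboundary. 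The estimate implies that $\widehat{\delta^t}_{\Phi_t}(z)$ exists if and only if $\widehat{\gamma^t}_{f_t}(g_t(z))$ exists, with the two values equal; that Misiurewicz--Ziemian rotation sets of corresponding invariant subsets agree; and that bounded deviation is preserved up to an additive shift. Combining with the bijection above yields that $D'$ is a $\bv$-set, a $\bv$-minimal set, or a \vgoober\ for $\Phi_t|_{\Lambda_t}$ if and only if $D$ is the corresponding object for $f_t$; and that $\mu'$ has the same rotation vector as $\mu$, with $\mu'$ directional (respectively lost) if and only if $\mu$ is.

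Finally, the $\Phi_t$-minimal sets not contained in $\Lambda_t$ are handled directly. Since $\Omega(\Phi_t)\subset\Lambda_t\cup\overline{S}$ and $\Phi_t|_{\overline{S}}=\id$, these are singleton fixed points of rotation vector $(0,0)$; they only add extra $(0,0)$-minimal sets, doing no harm to any existence statement and (since $(0,0)\notin P_t$) causing no conflict with the uniqueness or non-existence assertions in part~c)ii. Theorem~\ref{thm:eight2} then transfers term by term to yield Theorem~\ref{thm:torus2}. The main obstacle is the second step: the proof of Theorem~\ref{thm:torusrot} is phrased at the level of rotation sets rather than pointwise, so one must reread it carefully to extract the uniform bound, and simultaneously verify that the choice $g_t\circ h_t=\pi_0$ is compatible with the construction in~\cite{param-fam}. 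Once these are in hand, the remaining work is bookkeeping.
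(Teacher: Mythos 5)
Your proposal is correct and follows essentially the same route as the paper, which deduces the theorem from the conjugacy of $\Phi_t|_{\Lambda_t}$ with the natural extension of $f_t$, Lemma~\ref{lem:invlim}, the cocycle comparison already established in the proof of Theorem~\ref{thm:torusrot}, and the observation that the only minimal sets outside $\Lambda_t$ are fixed points in $\overline{S}$ with rotation vector $(0,0)$. The paper states this as immediate; your write-up merely makes explicit (correctly) the compatibility $g_t\circ h_t=\pi_0$ needed so that the rotation-preserving map on minimal sets coincides with the bijection of Lemma~\ref{lem:invlim}.
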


\begin{remarks}
\label{rmk:compare}
\begin{enumerate}[a)]
\item Theorem~\ref{thm:torus2} was proved directly as a translation of
  Theorems~\ref{thm:alpha-rep} and~\ref{thm:v-rep} about rotation sets
  of symbolic $\beta$-shifts. Much is already known, however, about
  representatives for rotation vectors in the interior of the rotation
  set of a torus homeomorphism $\Phi\colon\T^2\to\T^2$ isotopic to the
  identity. Misiurewicz and Ziemian show that every
  $\bv\in\Intt(\rho_{\sMZ}(\Phi))$ is represented by a
  \vgoober\ (proof of Theorem~A(a) of~\cite{MZ2}). When~$\bv$ is
  rational, these \vgoober{s} can be chosen to be periodic orbits, by
  a theorem of Franks~\cite{franks}; and to have the topological type
  of a periodic orbit of a rigid rotation of the torus by the
  vector~$\bv$, using a result of Parwani~\cite{parwani}.
\item The significance of the existence of \vgoober{s} is illustrated
  by J\"ager's result~\cite{J1}, that if $\bv$ is irrational
  and~$\Phi$ has a \vgoober~$Z$, then there is a semi-conjugacy,
  homtopic to the inclusion, onto a minimal set of rigid rotation
  by~$\bv$. Therefore if~$\bv$ is totally irrational then $\Phi\colon
  Z\to Z$ is semi-conjugate to a minimal rigid rotation of~$\T^2$,
  while if~$\bv$ is partially irrational then $\Phi\colon Z\to Z$ is
  semi-conjugate to a minimal rigid rotation of a circle.
\item The relationship between Theorem~\ref{thm:torus2} and recent
  results of Zanata~\cite{zanata} and Le Calvez \& Tal~\cite{tal} is
  also important to note.  If $\Phi\colon\T^2\to\T^2$ is a
  homeomorphism and $\mu\in\cM_e(\Phi)$ represents
  an extreme point~$\bv$ of $\rho_{\sMZ}(\Phi)$, then
\begin{enumerate}[i)]
\item if $\rho_{\sMZ}(\Phi)$ has multiple supporting lines at~$\bv$,
  then $\mu$ is directional and its support has bounded deviation; and
\item if $\rho_{\sMZ}(\Phi)$ has a unique supporting line at $\bv$ which
  {\em does not} intersect $\rho_{\sMZ}(\Phi)$ in a non-trivial segment,
  then $\mu$ is directional.
\end{enumerate}
These statements were proved by Zanata in the case where $\Phi$ is a
$C^{1+\epsilon}$ diffeomorphism, and were subsequently improved
to $C^0$ by Le Calvez and Tal. The examples which we have presented
show that these results  are in some sense sharp.  Specifically, in
Theorem~\ref{thm:torus2}b), the examples with unbounded deviation,
based on Example~\ref{ex:not-goober}, have a unique supporting line;
and in Theorem~\ref{thm:torus2}c)iii), where there are lost measures,
at least in the cases we have been able to analyze,
the unique supporting line intersects the rotation set in a
non-trivial segment.
\end{enumerate}
\end{remarks}

\section{Questions raised by the family}
\label{sec:questions}
 The following properties hold for the
rotation sets~$\rho(t)$ of the family $\{\Phi_t\}$ constructed
here. Are they true in general? If not, are there natural conditions
under which they hold?

\begin{enumerate}[a)]
\item A point on the boundary of~$\rho(t)$ is a polygonal vertex if
  and only if it is a rational extreme point, and is a limit extreme
  point if and only if it is an irrational extreme point.
\item $\rho(t)$ has only finitely many irrational extreme points.
\item For every point $\bv\in\rho(t)$, including points on the
  boundary, there is a point $z\in\T^2$ with $\widehat{\delta^t}(z) =
  \bv$, so that $\rho_p(\Phi_t) = \rho_{\sMZ}(\Phi_t)$.
\item At least generically, totally irrational extreme points are
  smooth.
\item If $\rho(t_1) \not = \rho(t_2)$, then the function
  $t\mapsto\Ex(\rho(t))$ is discontinuous at some $t\in(t_1,t_2)$ (the
  {\em Tal-Zanata property}).
\end{enumerate}

\medskip
At least one of the properties of the family~$\{\Phi_t\}$ does not
hold in general, namely that every $\bv$ belonging to an interval with
rational endpoints contained in $\rho(t)$ is represented by a
\vgoober. This property does not hold in an example of Misiurewicz and
Ziemian (\cite{MZ2} Section~3).

\medskip

The proof of Theorem~\ref{thm:param-fam}, which was used to unwrap the
family of maps on the figure eight space, is essentially $C^0$.  Do
the phenomena observed here hold with more smoothness? What can one
say about generic rotation sets, and the rotation sets in generic
one-parameter families, in the $C^r$ category?

\bibliographystyle{amsplain}
\bibliography{torusrefs}

\end{document}